\newtheorem{theorem}{Theorem}[section]
\newtheorem{lemma}[theorem]{Lemma}
\newtheorem{corollary}[theorem]{Corollary}
\newtheorem{proposition}[theorem]{Proposition}
\begin{document}

\def\C{{\mathbb C}}
\def\N{{\mathbb N}}
\def\Z{{\mathbb Z}}
\def\R{{\mathbb R}}
\def\PP{\cal P}
\def\p{\rho}
\def\phi{\varphi}
\def\ee{\epsilon}
\def\ll{\lambda}
\def\l{\lambda}
\def\a{\alpha}
\def\bb{\beta}
\def\D{\Delta}
\def\dd{\delta}
\def\g{\gamma}
\def\rk{\text{\rm rk}\,}
\def\dim{\text{\rm dim}\,}
\def\ker{\text{\rm ker}\,}
\def\square{\vrule height6pt width6pt depth 0pt}
\def\epsilon{\varepsilon}
\def\phi{\varphi}
\def\kappa{\varkappa}
\def\strl#1{\mathop{\hbox{$\,\leftarrow\,$}}\limits^{#1}}
\def\lal{{\Lambda\!}^L}
\def\lar{{\Lambda\!}^R}

\vskip1cm

\title{The proof of the Kontsevich periodicity conjecture on noncommutative birational transformations}

\author{
 Natalia Iyudu, Stanislav Shkarin}

\date{}

\maketitle

\begin{abstract} For an arbitrary associative unital ring $R$, let $J_1$ and $J_2$ be the following noncommutative birational partly defined involutions on the set $M_3(R)$ of $3\times 3$ matrices over $R$: $J_1(M)=M^{-1}$ (the usual matrix inverse) and $J_2(M)_{jk}=(M_{kj})^{-1}\,$
 (the transpose of the Hadamard inverse).

 We prove the following surprising conjecture by Kontsevich (1996) saying that $(J_2\circ J_1)^3$ is the identity map modulo the ${\rm Diag}_{L} \times \rm{Diag}_R$ action $(D_1,D_2)(M)=D_1^{-1}MD_2$ of pairs of invertible diagonal matrices.

 That is, we show that for each $M$ in the domain where $(J_2\circ J_1)^3$ is defined, there are invertible diagonal $3\times 3$ matrices $D_1=D_1(M)$ and $D_2=D_2(M)$ such that $(J_2\circ J_1)^3(M)=D_1^{-1}MD_2$.
\end{abstract}

\small \noindent{\bf MSC:} \ \ 16S38, 16S50, 16S85

\noindent{\bf Keywords:} \ \ Matrices over noncommutative rings, Hadamard matrix product, noncommutative birational involutions, Cremona transformation, free field, noncommutative identities, birational dynamics
\normalsize

\section{Introduction \label{s1}}\rm

The conjecture we deal with appeared in around 1996 from the idea to generalise the following commutative picture in the noncommutative setting.
Consider for a fixed ${n,m}$ configurations of $n+m$ points ${\cal C}_{n,m}$ on the projective space $\mathbb P^{n-1}$, up to an action of $\rm{Gl}_n \times \rm{Diag }_{n+m}$ \cite{Dolg}.
Symmetric group $S_{n+m}$ acts by permuting points (columns of $n \times (n+m)$ matrices). We can normalise a $n \times (n+m)$  matrix, representing a point, by making the beginning part into the identity matrix,
that is we get space of $n \times m$ matrices, up to $\rm{Diag }_{n}\times \rm{Diag }_{m}$ action, where the Cremona tranform

 $$Cr: \alpha_{ij} \to \alpha_{ij}^{-1},\,\, 1 \leq i \leq n, 1 \leq j \leq m $$
is defined.
The symmetric group $S_{n+m}$ together with the Cremona involution $Cr$ generate a birational action  of the Coxeter group $E_{n,m}$, on ${\cal C}_{n,m}$, where the usual relations holds:

$$Cr \circ \sigma_i=  \sigma_i \circ Cr, \,\, i \neq n $$
and
$$ (Cr \circ \sigma_n)^3=id.$$

  When one tries to generalise  this picture for noncommutative variables, some identities of the Coxeter type hold. The mysterious identity in the Kontsevich Conjecture should be the one of the similar  nature, but it turned out to be quite difficult to prove.

At present very little is known about the 'unglued' noncommutative picture behind the familiar commutative world. The geometry which may exist in the noncommutative setting is poorly understood. This paper constitute an attempt to find an appropriate combinatorial tool to deal with noncommutative identities describing these 'unusual' geometries.

In the notes of the 2011 Arbeitstagung talk on 'Noncommutative identities' (arxiv1109.2469) the conjecture was formulated as follows.

Let $(M_{ij})_{1 \leq i,j \leq 3}$ be a matrix, whose entries are $9=3 \times 3$ independent noncommutative variables. Let us consider three 'birational involutions'

\vspace{5mm}
$$
\begin{array}{ll}I_1: \,\, M \to M^{-1}&\text{:the matrix inverse;}
\\
I_2: \,\, M_{ij} \to (M_{ij})^{-1}, \,\, \forall i,j&\text{:inverting the matrix entries;}
\\
I_3: \,\, M \to M^t&\text{:the transpose.}
\end{array}
$$
\vspace{3mm}

The composition $I_3 \circ I_2 \circ I_1 $ commutes with the multiplication on the left
and on the right by diagonal $3 \times 3$ matrices. We can factorize it by the
action of ${\rm Diag}_{L} \times \rm{Diag}_{R}$  and get only 4 independent variables, setting
e.g. $ M_{ij} = 1 $ for $\min\{i,j\} = 1$.

\vspace{5mm}

{\bf Conjecture}. (M.Kontsevich)( \cite{K},  Conjecture 1 in section 3)
{\it
The transformation
$(I_3 \circ I_2 \circ I_1)^3$ is equal to the identity modulo
${\rm Diag}_{L} \times \rm{Diag}_R$ - action. In other words,
there exists two diagonal $3 \times 3$ matrices $D_L(M)$ and $D_R(M)$, whose entries are noncommutative rational functions in $9$ variables $M_{ij}$, such that

$$ (I_3 \circ I_2 \circ I_1)^3(M)=D_L(M) \, M \, D_R(M).$$
}

\vspace{5mm}

What we can say for sure, after proving the conjecture,
 is that the group $S_3 \rtimes_{\sigma} S_3 \times S_3$
(consisting of 216 elements), where $\sigma$ is a flip automorphism: $\sigma(a,b)=(b,a)$, has a faithful representation by noncommutative birational transformations of 9  variables.

 In more general situations this kind of noncommutative birational transformations can provide a noncommutative discrete integrable system, as it is explained in \cite {K}.
In \cite {K} there were formulated also several other conjectures of the similar spirit, which have to do with noncommutative birational transformations. Some of them are solved, or partially solved, for example, in \cite {Re}, \cite {RK}.

The object consisting  of 'noncommutative rational functions' on certain set of generators
$X=\{x_1,...,x_n\}$ has been considered in \cite{R}. We denote here by ${\cal R}(X)$ the division ring of free
noncommutative (but associative) rational expressions on alphabet $X$.
The elements of this ring serve as matrix elements for presentations of the operators $I_1, I_2, I_3 $ and its iterations. The difficulty of the problem based on the fact that no useful {\it normal form} of an element of that 'free division ring' $\cal R$ is available.
It was shown in \cite{CR} that in the rigourously defined there 'free field' the {\it word problem} is solvable, but nothing is known about the  {\it conjugacy problem}.
Anyway, an appropriate (sometimes just lucky) choice of representatives for certain elements in $\cal R$ allowed to proceed with the prove of the conjecture, which requires to check both conjugacy and equality.

Our proof of the conjecture allows to formulate a slightly stronger result, which says that the conjecture is true not only over a universal free object, where some or all elements are invertible (for example, over the free field), but over {\it any} algebra, where the {\it finite set} of rational expressions on generators  are defined and invertible.

This fact has to do with another interesting phenomenon featuring in this situation:
{\it the confinement of singularities} \footnote{we would like to thank A.Veselov for drawing our attention on some activities around study of this phenomenon}
for iterations of this particular rational map. Nothing like this holds, for example, for 4-dimensional case. The property that singularities of rational expressions appearing from iterations of the map are stabilising, considered to be a strong form of {\it integrability}.

From now on we assume that
$R$ is an (arbitrary) unital associative ring and $R^*$ is the group of invertible elements of $R$.
$M_n(R)$ is obviously a unital ring with respect to both the usual matrix multiplication $*$ and the Hadamard (=componentwise) matrix multiplication $\star$.

Our definitions of   $J_1$ and  $J_2$ in terms of these two multiplications  means  that  $J_1$ maps $M$ into its $*$-inverse, while $J_2$ maps $M$ into the transpose of its $\star$-inverse.

Thus the domain of the definition of $J_1$, ${\rm dom}\,(J_1)$ is the set $M_n^*(R)$ of all $*$-invertible matrices in $M_n(R)$, while the domain of the definition of $J_2$, ${\rm dom}(J_2)$ is the set $M_n^\star$ of all $\star$-invertible matrices in $M_n(R)$ (=the matrices with invertible entries). Denote
$$
J:=J_2\circ J_1.
$$
We always assume that for a composition of two maps $f$ and $g$, the natural domain of $g\circ f$ is ${\rm dom}(g\circ f)=\{x\in{\rm dom}(f):f(x)\in{\rm dom}(g)\}$. In particular,
$$
{\rm dom}(J)=\{M\in M_n^*(R): M^{-1}\in M^\star(R)\},
$$
${\rm dom}(J^2)=\{M\in{\rm dom}(J):J(M)\in{\rm dom}(J)\}$ etc.
As both $J_1$ and $J_2$ are involutions, $J^{-1}=J_1\circ J_2$ and
$$
{\rm dom}(J^{-1})=\{M\in M_n^\star(R): J_2(M)\in M_n^*(R)\}.
$$

 As above ${\rm Diag}_L \times {\rm Diag}_R $  be the group of pairs $(D_1,D_2)$ of invertible diagonal matrices
  which acts on $M_n(R)$ according to the rule
$$
(D_1,D_2)(M)=D_1^{-1}MD_2.
$$

Further we will denote this group in a shorter way by ${\cal D}_{LR}$.

For $A,B\in M_n(R)$, we write
$$
A\sim B\ \ \text{if $A$ and $B$ are in the same ${\cal D}_{LR}$-orbit.}
$$
In other words, $A\sim B$ if and only if $B$ can be obtained from $A$ by multiplying on both sides by invertible diagonal matrices.

It is easy to see that for any $D_1$ and $D_2$, $\,\,J_k(D_1^{-1}MD_2)=D_2^{-1}J_k(M)D_1$ for $k=1,2$.
Thus both $J_1$ and $J_2$ act on ${\cal D}_{LR}$-orbits.
Hence we have even more then needed, to say that $J$ also acts on orbits, in fact the following is true:

$$J(D_1^{-1}MD_2)=D_1^{-1}J(M)D_2.$$

So, we have
\begin{equation}\label{orb1}
\text{if $A\sim B$, then $J_1(A)\sim J_1(B)$, $J_2(A)\sim J_2(B)$ and $J(A)\sim J(B)$}
\end{equation}

(Note, that this statement supposed to take into account domains of the maps, namely it is meant that if $A\sim B$, then either both $A$ and $B$ are not in the domain of the relevant map or both are in its domain and the map sends them to the same ${\cal D}_{LR}$-orbit).

Now we can state the main result.

\begin{theorem}\label{main1}
Let $R$ be an arbitrary unital
associative ring, and $J=J_2 \circ J_1$  acts on  $M_3(R)$, then   for every $M\in {\rm dom}(J^3)$,  $J^3(M)\sim M$.
\end{theorem}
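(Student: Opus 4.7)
The strategy is to exploit the ${\cal D}_{LR}$-invariance in (\ref{orb1}) to reduce the nine-variable problem to a four-variable one, then to compute the iterates of $J$ step by step, re-normalizing after each application.

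First, I would use the ${\cal D}_{LR}$-action to replace any $M\in{\rm dom}(J^3)$ by a matrix in its orbit whose first row and first column consist of $1$'s. This is possible because $M\in{\rm dom}(J)$ forces $M_{i1}, M_{1j}, M_{11}$ to be invertible; the appropriate choice is $D_1={\rm diag}(M_{11},M_{21},M_{31})$ together with a matching $D_2$. After this reduction, I may assume
\[
M=\begin{pmatrix} 1 & 1 & 1 \\ 1 & a & b \\ 1 & c & d \end{pmatrix},
\]
so that only four noncommutative generators $a,b,c,d$ remain, and by (\ref{orb1}) it suffices to prove $J^3(M)\sim M$ for $M$ of this shape.

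Next, I would compute $J_1(M)=M^{-1}$ by a Schur-complement argument on the $(1|2,3)$ block decomposition: the inverse is controlled by the $2\times 2$ Schur complement
\[
S=\begin{pmatrix} a-1 & b-1 \\ c-1 & d-1 \end{pmatrix},
\]
and a $2\times 2$ noncommutative inversion formula (via quasideterminants) produces explicit rational expressions for all nine entries of $M^{-1}$. Applying $J_2$ (inverting each entry and transposing) gives $J(M)$. The crucial next step is to re-normalize $J(M)$ back into $1$'s-on-border shape by an appropriate left-and-right diagonal rescaling; this gives new variables $a_1,b_1,c_1,d_1$ as rational functions of $a,b,c,d$. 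Iterating this procedure twice more yields $(a_k,b_k,c_k,d_k)$ for $k=2,3$, and the goal is to show $(a_3,b_3,c_3,d_3)=(a,b,c,d)$ in the free field on four generators.

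The principal obstacle is the rapid growth of the noncommutative rational expressions combined with the absence of a normal form in the free field: straightforward simplification is unavailable. The confinement-of-singularities phenomenon alluded to in the introduction suggests that essentially the same atomic rational subexpressions recur across all three iterations, and that most of the new factors appearing at each stage can be absorbed into the diagonal rescaling. The bulk of the argument is therefore expected to consist of (i) identifying a short library of persistent atomic expressions, (ii) writing $a_k,b_k,c_k,d_k$ compactly in terms of them, and (iii) closing the loop at $k=3$ by a carefully chosen sequence of rewritings, each of which is justified only by the defining relations of $*$- and $\star$-multiplication and by the involutivity of $J_1,J_2$. An equivalent but potentially more symmetric reformulation is the braid relation $J_1\circ J_2\circ J_1\sim J_2\circ J_1\circ J_2$, obtained from $J^3\sim{\rm id}$ using that $J_1^2=J_2^2={\rm id}$; working with this form may shorten the intermediate computations, since each side applies the sequence of operations only three times rather than six.
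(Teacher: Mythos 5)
Your overall strategy coincides with the paper's: reduce to the four-parameter family $\widehat M_3(R)$ of matrices bordered by $1$'s, invert via the $2\times 2$ Schur complement $S$ (this is exactly the paper's Lemma~\ref{inve}), and study the renormalized map (the paper's $\Phi=J_2\circ\lal\circ J_1$) instead of $J$ itself. However, there is a genuine gap in your stated target. In the noncommutative setting the normalization to $\widehat M_3(R)$ does \emph{not} produce unique orbit representatives: by the conjugacy lemma (Lemma~\ref{nonu}), two matrices $A,B\in\widehat M_3(R)$ lie in the same ${\cal D}_{LR}$-orbit if and only if $B=x^{-1}Ax$ entrywise for some $x\in R^*$. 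The paper's computation shows that $\Phi^3(A)=x^{-1}Ax$ for a \emph{nontrivial} conjugator $x$ (built from $(d-c)^{-1}(c-1)-(b-a)^{-1}(a-1)$ and a companion factor), so your goal ``$(a_3,b_3,c_3,d_3)=(a,b,c,d)$ in the free field'' is simply false; the computation you propose would never close. What must be proved is simultaneous conjugacy of the four entries by a single element of $R^*$, and finding that conjugator is the real content of the proof. The paper sidesteps computing $\Phi^3$ entirely: it proves $\Phi^2(A)=\zeta^{-1}\Psi(A)\zeta$ with $\Psi=J_2\circ\Phi\circ J_2$ a representative of $\Phi^{-1}$ on orbits, using a symmetry lemma (the Klein four-group action) so that only one of the four entries needs to be computed. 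Your closing remark about the braid-type reformulation $J_1J_2J_1\sim J_2J_1J_2$ is a good instinct and is essentially what $\Phi^2\sim\Phi^{-1}$ amounts to, but without recognizing the residual conjugation ambiguity it does not rescue the argument.

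A second, smaller gap: your justification that ``$M\in{\rm dom}(J)$ forces $M_{i1},M_{1j},M_{11}$ to be invertible'' is incorrect. Membership in ${\rm dom}(J)$ only requires $M$ to be invertible and $M^{-1}$ to be Hadamard invertible; over a field, $M$ may then still have zero entries. That all entries (indeed all square submatrices) of $M$ and of $J_2(M)$ are invertible for $M\in{\rm dom}(J^3)$ is a theorem (Proposition~\ref{domaAA} / Theorem~\ref{DOMA}, the singularity-confinement statement), and it is also what guarantees that each of your intermediate renormalizations is well defined. This domain analysis cannot be waved through; it occupies a substantial part of the paper's argument (Lemmas~\ref{subma}--\ref{invphi}).
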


In other words, for each $M\in M_3(R)$ in the domain of $J^3$, there is a pair $(D_1,D_2)$ of invertible diagonal $3\times 3$ matrices over $R$ such that $J^3(M)=D_1^{-1}MD_2$.

The domain  of $J^3$ is described as follows.

\begin{proposition}\label{domaAA} When acting on $M_3(R)$, the domain of $J^n$ does not depend on $n$ for $n\geq 2$. For $n\geq 2$, ${\rm dom}(J^n)={\rm dom}(J)\cap {\rm dom}(J^{-1})$ and $M$ belongs to ${\rm dom}(J^n)$ if and only if all square submatrices of $M$ and of $J_2(M)$ are invertible
 \end{proposition}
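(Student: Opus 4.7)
The plan is to prove the proposition by explicit analysis of the quasi-determinant structure of $J$, avoiding any circularity with Theorem \ref{main1}. The key identity is $J(M)_{jk}=|M|_{jk}$, which follows from the Gelfand--Retakh formula $(M^{-1})_{kj}=|M|_{jk}^{-1}$. From this, together with the iff between ``$(M^{-1})_{kj}$ is invertible'' and ``$M^{jk}$ is invertible'' (the noncommutative Cramer rule for $3\times 3$ matrices), $\mathrm{dom}(J)$ equals the set of $M$ such that $M$ and every $2\times 2$ submatrix $M^{jk}$ are invertible.

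Next, I compute $\mathrm{dom}(J^2)=\{M\in\mathrm{dom}(J):J(M)\in\mathrm{dom}(J)\}$. Writing $J(M)=(|M|_{jk})$ and expanding via a Pl\"ucker-type identity for $3\times 3$ quasi-determinants (in the commutative case: $\det M^{i_1j_1}\det M^{i_2j_2}-\det M^{i_1j_2}\det M^{i_2j_1}=\pm\det M\cdot M_{i_3j_3}$, with $i_3,j_3$ the complementary indices, and an analogous noncommutative identity), the invertibility of each $2\times 2$ submatrix of $J(M)$ is equivalent to the invertibility of the corresponding entry of $M$, and the invertibility of $J(M)$ as a $3\times 3$ matrix encodes the invertibility of $J_2(M)$. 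Combined with the $2\times 2$ self-duality of $J_2$ (for a $2\times 2$ block $P$ with invertible entries, $J_2(P)$ is invertible iff $P$ is, since the Schur complements $d-ca^{-1}b$ and $ca^{-1}b-d$ differ only by sign), we obtain $\mathrm{dom}(J^2)=\Omega=\mathrm{dom}(J)\cap\mathrm{dom}(J^{-1})$, where $\Omega=\{M:\text{all square submatrices of }M\text{ and of }J_2(M)\text{ are invertible}\}$.

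For the stabilization $\mathrm{dom}(J^n)=\Omega$ for all $n\geq 2$, I would show directly that $J$ preserves $\Omega$: using $J_2(J(M))=M^{-1}$ (immediate from $J_2\circ J_2=\mathrm{id}$) and the fact that the submatrix invertibility conditions on $M^{-1}$ are equivalent to those on $M$ (via Jacobi's identity expressing $2\times 2$ minors of $M^{-1}$ in terms of entries of $M$), one checks that $M\in\Omega$ implies $J(M)\in\Omega$. Iterating gives $\Omega\subseteq\mathrm{dom}(J^n)$ for all $n\geq 1$, and combined with $\mathrm{dom}(J^2)\subseteq\Omega$ from the previous step, $\mathrm{dom}(J^n)=\Omega$ for $n\geq 2$.

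The main obstacle is the rigorous noncommutative Pl\"ucker-type identity for $3\times 3$ quasi-determinants and the iff version of the quasi-determinant characterization of $\mathrm{dom}(J)$ over an arbitrary unital associative ring $R$ (rather than a skew field). Every ``quasi-determinant manipulation'' above hides a nontrivial identity that must be verified in this ring-theoretic generality, and propagating the invertibility conditions correctly through the composition $J=J_2\circ J_1$ is where the bulk of the technical work resides.
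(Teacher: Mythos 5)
Your skeleton is sound and in fact parallels the paper's own route: you characterize ${\rm dom}(J)$ through the Schur--complement equivalence between invertibility of $(M^{-1})_{kj}$ and of the complementary $2\times 2$ submatrix $M^{jk}$ (valid over any unital ring via the block-inverse identities), you use the $2\times 2$ self-duality of $J_2$ (the paper's Lemma~\ref{2t2inv}) to transfer invertibility of $2\times 2$ submatrices between $M^{-1}$ and $J(M)=J_2(M^{-1})$, and you obtain stabilization of the domains by showing that $J$ preserves the set $\Omega$. This is essentially the content of Lemma~\ref{meaow} and the proof of Theorem~\ref{DOMA}, except that the paper first uses the ${\cal P}_{LR}$- and ${\cal D}_{LR}$-actions to reduce every submatrix check to a single corner submatrix of a normalized matrix in $\widehat M_3(R)$, which keeps the bookkeeping finite and explicit.

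There is, however, a genuine gap at the one step that carries all of the difficulty: your assertion that ``the invertibility of $J(M)$ as a $3\times 3$ matrix encodes the invertibility of $J_2(M)$,'' i.e.\ that for $M\in\Omega$ the matrix $J_2(M^{-1})$ is invertible (and conversely). This does not follow from the Jacobi/block-inverse identities you invoke, and it is not an available off-the-shelf quasideterminant or Pl\"ucker identity over an arbitrary unital ring; it is precisely the content of the paper's Lemma~\ref{invphi}. There the claim is reduced, after normalization, to showing that $(d'-c')^{-1}(c'-1)-(b'-a')^{-1}(a'-1)$ is conjugate, by an explicitly constructed invertible element $w$, to $(c-1)(d-c)^{-1}d-(a-1)(b-a)^{-1}b$, the ``noncommutative determinant'' governing invertibility of $J_2(A)$; producing $w$ and verifying the identity $u=w$ (the authors' ``small miracle'') is the computational heart of the domain theorem. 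Your proposal names this as ``the main obstacle'' but supplies neither the identity nor a route to it, so the argument does not close: without it you can prove neither $\Omega\subseteq{\rm dom}(J^2)$ nor ${\rm dom}(J^2)\subseteq\Omega$, since the latter only hands you invertibility of $J_2(M^{-1})$ directly, not of $J_2(M)$. Everything else in your outline is correct and recoverable in the stated generality.
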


 The latter condition amounts to the invertibility of finite number of 'noncommutative determinants'
 of submatrices of $M$ and $J_2(M)$.

The rest of the paper is mainly devoted to the proof of Theorem~\ref{main1}. Our proof is constructive. In particular, it provides (after a suitable reduction of the number of free parameters) an explicit formula for the relevant maps and diagonal matrices.

\section{Preliminary facts and $2\times 2$ case}

In this elementary section we deal with the baby version of the Conjecture. Namely, we look at the case of $2\times 2$ matrices. The purpose of this section is two-fold. First, it provides a glimpse into the technique used. Second, the formulas obtained in this section will be used later in the proof of Theorem~\ref{main1}.

The following lemma is known and can, for example, be extracted from \cite{R}. We prove it for the reader's convenience.

\begin{lemma}\label{2t2inv} Let
\begin{equation*}
A=\left(\begin{array}{cc}
a&b\\
c&d
\end{array}
\right)\in M_2(R).
\end{equation*}
Then
\begin{equation}\label{first2}
A\in M_2^\star(R)\ \Longrightarrow\ \bigl(A\in M_2^*(R)\iff J_2(A)\in M_2^*(R)\iff db^{-1}-ca^{-1}\in R^*\bigr).
\end{equation}
Furthermore, if $A\in M_2^*(R)\cap M_2^\star(R)$, then $a-bd^{-1}c$, $c-db^{-1}a$, $b-ac^{-1}d$ and $d-ca^{-1}b$ are invertible in $R$ and $A^{-1}$ is given by the formula
\begin{equation}\label{inva}
A^{-1}=\left(\begin{array}{cc}
(a-bd^{-1}c)^{-1}&(c-db^{-1}a)^{-1}\\
(b-ac^{-1}d)^{-1}&(d-ca^{-1}b)^{-1}
\end{array}
\right).
\end{equation}
And finally, the following implication holds:
\begin{equation}\label{second2}
A\in M_2^*(R)\ \Longrightarrow\ \bigl(A\in M_2^\star(R)\iff A^{-1}\in M_2^\star(R)\bigr).
\end{equation}
\end{lemma}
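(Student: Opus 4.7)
My plan rests on the noncommutative Schur complement and a handful of algebraic identities. The starting observation is that when $a,b,c,d$ are all invertible, the four Schur complements are simultaneously invertible because
\begin{equation*}
db^{-1}(a-bd^{-1}c)=db^{-1}a-c=(db^{-1}-ca^{-1})a,
\end{equation*}
and by exchanging the roles of $(a,d)\leftrightarrow(b,c)$ one obtains the analogous chain linking $d-ca^{-1}b$, $b-ac^{-1}d$, $c-db^{-1}a$ and $db^{-1}-ca^{-1}$. Thus all five quantities $a-bd^{-1}c$, $d-ca^{-1}b$, $b-ac^{-1}d$, $c-db^{-1}a$ and $db^{-1}-ca^{-1}$ are members of $R^*$ together, provided that $a,b,c,d\in R^*$.

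Next I would prove the inverse formula (2.2) and the first equivalence in (2.1) together, via the block LDU factorisation
\begin{equation*}
A=\begin{pmatrix} 1 & bd^{-1}\\ 0 & 1\end{pmatrix}\begin{pmatrix} a-bd^{-1}c & 0\\ 0 & d\end{pmatrix}\begin{pmatrix} 1 & 0\\ d^{-1}c & 1\end{pmatrix},
\end{equation*}
which is valid in $M_2(R)$ as soon as $d\in R^*$. Since the outer triangular factors are always invertible, this identity shows that for $A\in M_2^\star(R)$, the matrix $A$ is in $M_2^*(R)$ iff $a-bd^{-1}c\in R^*$, hence (by the first paragraph) iff $db^{-1}-ca^{-1}\in R^*$. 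Inverting the factorisation gives the $(1,1)$-entry of $A^{-1}$ directly as $(a-bd^{-1}c)^{-1}$ and, after one line of manipulation, identifies the $(1,2)$-entry $-(a-bd^{-1}c)^{-1}bd^{-1}$ with $(c-db^{-1}a)^{-1}$ by multiplying through by $c-db^{-1}a$ and collapsing. The remaining two entries come from the symmetric UDL factorisation based on $a$ instead of $d$, giving the $(2,2)$-entry $(d-ca^{-1}b)^{-1}$ and the $(2,1)$-entry $(b-ac^{-1}d)^{-1}$ by the same routine.

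For the second equivalence in (2.1) I would apply the criterion just established to $J_2(A)$, whose entries are $a^{-1},c^{-1},b^{-1},d^{-1}$; the corresponding expression is $d^{-1}c-b^{-1}a$, and the identity
\begin{equation*}
d^{-1}c-b^{-1}a=-d^{-1}(db^{-1}-ca^{-1})a
\end{equation*}
shows at once that this lies in $R^*$ iff $db^{-1}-ca^{-1}$ does. Finally, implication (2.3) follows formally: the direction $A\in M_2^\star\Rightarrow A^{-1}\in M_2^\star$ is immediate from (2.2), since the four entries of $A^{-1}$ are inverses of elements of $R^*$; the converse is obtained by applying the forward direction to $A^{-1}$, using that $(A^{-1})^{-1}=A\in M_2^*$.

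The only real obstacle is the bookkeeping for the $(1,2)$- and $(2,1)$-entries in (2.2): one has to recognise the noncommutative Schur-complement identities that convert expressions like $-(a-bd^{-1}c)^{-1}bd^{-1}$ into $(c-db^{-1}a)^{-1}$, which is a short but careful manipulation and not a mere transposition of the commutative formula.
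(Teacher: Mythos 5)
Your proof is correct, and although it reuses the same chain of identities tying the four Schur complements to $db^{-1}-ca^{-1}$ (these are precisely the four ``obvious equalities'' the paper also starts from), the central step is carried out by a genuinely different device. The paper proves (\ref{inva}) by writing down the candidate inverse and verifying $AB=BA=I$ entrywise, the key computation being $(1-x)^{-1}+(1-x^{-1})^{-1}=1$ with $x=bd^{-1}ca^{-1}$, and it then handles the converse direction of (\ref{first2}) by a separate argument, manipulating the linear equations satisfied by the entries of $A^{-1}$ to extract $(a-bd^{-1}c)s=s(a-bd^{-1}c)=1$. Your block factorisation
\begin{equation*}
A=\left(\begin{array}{cc}1&bd^{-1}\\0&1\end{array}\right)\left(\begin{array}{cc}a-bd^{-1}c&0\\0&d\end{array}\right)\left(\begin{array}{cc}1&0\\d^{-1}c&1\end{array}\right)
\end{equation*}
delivers both directions of the equivalence and the explicit inverse in one stroke, since the outer unitriangular factors are always invertible and a diagonal matrix ${\rm diag}(p,d)$ with $d\in R^*$ is two-sided invertible precisely when $p$ is. The price is that one factorisation only yields the first row of (\ref{inva}) in the stated symmetric form (after converting $-(a-bd^{-1}c)^{-1}bd^{-1}$ into $(c-db^{-1}a)^{-1}$ via $c-db^{-1}a=-db^{-1}(a-bd^{-1}c)$), so you need the second, $a$-based factorisation for the second row, whereas the paper's single verification of $AB=BA=I$ produces all four entries at once. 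Your treatment of the $J_2(A)$ equivalence via $d^{-1}c-b^{-1}a=-d^{-1}(db^{-1}-ca^{-1})a$ and of (\ref{second2}) by applying the forward implication to $A^{-1}$ coincides with the paper's.
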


\begin{proof} Assume that $A\in M_2^\star(R)$. That is, $a,b,c,d\in R^*$. First, we prove that $A$ is invertible if and only if $db^{-1}-ca^{-1}\in R^*$ and that (\ref{inva}) holds. If $db^{-1}-ca^{-1}\in R^*$, then the obvious equalities
$$
\begin{array}{ll}
d-ca^{-1}b=(db^{-1}-ca^{-1})b,&c-db^{-1}a=-(db^{-1}-ca^{-1})a,\\
b-ac^{-1}d=-ac^{-1}(db^{-1}-ca^{-1})b,&a-bd^{-1}c=bd^{-1}(db^{-1}-ca^{-1})a
\end{array}
$$
imply that $a-bd^{-1}c$, $c-db^{-1}a$, $b-ac^{-1}d$ and $d-ca^{-1}b$ are invertible in $R$. This makes the matrix $B$ in the right-hand side of (\ref{inva}) well-defined. It is a straightforward exercise to verify that $AB=BA=I$. We shall show that the top left entry of $AB$ equals to $1$; the other  equalities are either trivial or verified in a similar manner. We have to check that $a(a-bd^{-1}c)^{-1}+b(b-ac^{-1}d)^{-1}=1$. Denoting $x=bd^{-1}ca^{-1}$, we have
\begin{align*}
&a(a-bd^{-1}c)^{-1}+b(b-ac^{-1}d)^{-1}=(1-bd^{-1}ca^{-1})^{-1}+(1-ac^{-1}db^{-1})^{-1}
\\
&\qquad\qquad=(1-x)^{-1}+(1-x^{-1})^{-1}=(1-x)^{-1}-x(1-x)^{-1}=(1-x)(1-x)^{-1}=1.
\end{align*}
The equality $AB=BA=I$ means that $A$ is invertible and that (\ref{inva}) holds.

Assume now that $A$ is invertible and let
\begin{equation*}
A^{-1}=\left(\begin{array}{cc}
s&t\\
u&v
\end{array}
\right)\ \Longrightarrow \
\left(\begin{array}{cc}
a&b\\
c&d
\end{array}
\right)\left(\begin{array}{cc}
s&t\\
u&v
\end{array}
\right)=
\left(\begin{array}{cc}
s&t\\
u&v
\end{array}
\right)\left(\begin{array}{cc}
a&b\\
c&d
\end{array}
\right)=\left(\begin{array}{cc}
1&0\\
0&1
\end{array}
\right).
\end{equation*}
In particular, $as+bu=sa+tc=1$ and $cs+du=sb+td=0$. Multiplying $cs+du=0$ by $bd^{-1}$ on the left and subtracting the result from $as+bu=1$, we get $(a-bd^{-1}c)s=1$. Multiplying $sb+td=0$ by $d^{-1}c$ on the right and subtracting the result from $sa+tc=1$, we get $s(a-bd^{-1}c)=1$. Thus $a-bd^{-1}c$ is invertible with the inverse being $s$. Since $db^{-1}-ca^{-1}=db^{-1}(a-bd^{-1}c)a^{-1}$ and $a,b,d,a-bd^{-1}c\in R^*$, $db^{-1}-ca^{-1}$ is invertible. Thus in the case $A\in M_2^\star(R)$, $A\in M_2^*(R)\iff db^{-1}-ca^{-1}\in R^*$ and (\ref{inva}) holds. By the just verified equivalence applied to $J_2(A)$, $J_2(A)\in M_2^*(R)\iff d^{-1}c-b^{-1}a\in R^*$. Since $-d(d^{-1}c-b^{-1}a)a^{-1}=db^{-1}-ca^{-1}$, $d^{-1}c-b^{-1}a\in R^*\iff db^{-1}-ca^{-1}\in R^*$. These observations complete the proof of
(\ref{first2}).

It remains to verify the implication (\ref{second2}). Note that (\ref{inva}) implies that
$$
C\in M_2^*(R)\cap M_2^\star(R)\ \Longrightarrow C^{-1}\in M_2^*(R)\cap M_2^\star(R).
$$
Indeed, the matrix entries in (\ref{inva}) are invertible.
Applying that to $A$ and $A^{-1}$, makes (\ref{second2}) obvious.
\end{proof}

\begin{proposition}\label{2b2case} For $J$ acting on $M_2(R)$, ${\rm dom}(J)={\rm dom}(J^{-1})=M_2^*(R)\cap M_2^\star(R)$, and $J$ acts bijectively on ${\rm dom}(J)$  according to the formula
\begin{equation}\label{zzz}
J(A)=\left(\begin{array}{cc}
a-bd^{-1}c&b-ac^{-1}d\\
c-db^{-1}a&d-ca^{-1}b
\end{array}
\right),\ \ \text{where}\ \
A=\left(\begin{array}{cc}
a&b\\
c&d
\end{array}
\right)\in M^\star_2(R)\cap M_2^*(R).
\end{equation}
Furthermore, $J(A)\sim J^{-1}(A)$ for each $A\in {\rm dom}(J)$, and $A\in {\rm dom}(J)={\rm dom}(J^{-1})$ if and only if $a,b,c,d,db^{-1}-ca^{-1}\in R^*$ and
\end{proposition}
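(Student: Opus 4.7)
The plan is to reduce everything to Lemma~\ref{2t2inv} plus a single explicit computation of $J^2(A)$.

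First I would pin down the domains. By definition ${\rm dom}(J)=\{M\in M_2^*(R):M^{-1}\in M_2^\star(R)\}$, and (\ref{second2}) applied to $M\in M_2^*(R)$ says exactly that $M^{-1}\in M_2^\star(R)\iff M\in M_2^\star(R)$. This gives ${\rm dom}(J)=M_2^*(R)\cap M_2^\star(R)$. Similarly ${\rm dom}(J^{-1})=\{M\in M_2^\star(R):J_2(M)\in M_2^*(R)\}$ collapses to $M_2^\star(R)\cap M_2^*(R)$ by (\ref{first2}). Lemma~\ref{2t2inv} also rewrites this common set as $\{A:a,b,c,d,db^{-1}-ca^{-1}\in R^*\}$, yielding the closing characterization. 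Formula (\ref{zzz}) then drops out of (\ref{inva}): $J_2$ is the transpose of the Hadamard inverse, so it kills the outermost $(\cdot)^{-1}$ in every entry of $A^{-1}$ and swaps the $(1,2)$ and $(2,1)$ positions. Bijectivity of $J$ on ${\rm dom}(J)$ reduces to showing $J({\rm dom}(J))\subseteq{\rm dom}(J)$; the entries $\alpha,\beta,\gamma,\delta$ of $J(A)$ are invertible by Lemma~\ref{2t2inv}, and using the factorizations $\alpha=bd^{-1}Xa$, $\beta=-ac^{-1}Xb$, $\gamma=-Xa$, $\delta=Xb$ with $X=db^{-1}-ca^{-1}$, the criterion (\ref{first2}) applied to $J(A)$ gives $\delta\beta^{-1}-\gamma\alpha^{-1}=db^{-1}-ca^{-1}=X\in R^*$.

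For the last claim $J(A)\sim J^{-1}(A)$ I would verify the equivalent statement $J^2(A)\sim A$ and then apply $J^{-1}$, which respects $\sim$ by (\ref{orb1}). Setting $Y=bd^{-1}-ac^{-1}$ and reusing the factorizations above, every Schur-type product in (\ref{zzz}) applied to $J(A)$ telescopes because all the $X$'s and $X^{-1}$'s cancel: $\beta\delta^{-1}\gamma=ac^{-1}Xa$, $\alpha\gamma^{-1}\delta=-bd^{-1}Xb$, $\delta\beta^{-1}\alpha=-ca^{-1}bd^{-1}Xa$ and $\gamma\alpha^{-1}\beta=db^{-1}ac^{-1}Xb$. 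Substituting into (\ref{zzz}) yields
$$
J^2(A)=\begin{pmatrix}YXa & YXb\\ ca^{-1}YXa & db^{-1}YXb\end{pmatrix}=A\cdot\mathrm{diag}(a^{-1}YXa,\;b^{-1}YXb),
$$
so $A\sim J^2(A)$ via pure right multiplication by a diagonal matrix, and hence $J(A)\sim J^{-1}(A)$.

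The only real obstacle is the noncommutative bookkeeping: a naive expansion of the Schur-like entries in (\ref{zzz}) produces an unwieldy mess, and the collapse into the clean formula above only becomes visible once every entry of $J(A)$ is written with $X$ as the central factor, after which the $X$'s and $X^{-1}$'s cancel in a uniform way. The asymmetric split (identity on the left, diagonal on the right) that realizes $J^2(A)\sim A$ is forced by noncommutativity and is what makes the diagonal correction factors so explicit.
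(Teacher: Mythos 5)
Your proposal is correct, and while the domain identification is the same as the paper's (both reduce it to Lemma~\ref{2t2inv}), your proof of the equivalence $J(A)\sim J^{-1}(A)$ takes a genuinely different route. The paper never touches $J^2$: it factors each of $J(A)$ and $J^{-1}(A)$ as $D\cdot M_0\cdot D'$ with the \emph{same} middle matrix $M_0=\left(\begin{smallmatrix}d^{-1}&-b^{-1}\\-c^{-1}&a^{-1}\end{smallmatrix}\right)$ and explicit invertible diagonal factors, so the relation $J(A)\sim J^{-1}(A)$ is read off directly. You instead write every entry of $J(A)$ with $X=db^{-1}-ca^{-1}$ as the central factor, iterate formula (\ref{zzz}), and obtain the clean identity $J^2(A)=A\cdot\mathrm{diag}(a^{-1}YXa,\,b^{-1}YXb)$ with $Y=bd^{-1}-ac^{-1}$, then apply $J^{-1}$ and (\ref{orb1}); I have checked the four products $\beta\delta^{-1}\gamma$, $\alpha\gamma^{-1}\delta$, $\delta\beta^{-1}\alpha$, $\gamma\alpha^{-1}\beta$ and the resulting matrix, and they are all right, as is the computation $\delta\beta^{-1}-\gamma\alpha^{-1}=X$ that secures $J({\rm dom}(J))\subseteq{\rm dom}(J)$. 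Your version buys the stronger statement of the subsequent Remark ($J^2(A)\sim A$, with a purely one-sided diagonal correction) in explicit form, at the cost of a heavier entrywise computation; the paper's version buys explicit conjugating diagonals for $J(A)\sim J^{-1}(A)$ itself with almost no expansion. One small point you should make explicit: the relation $\sim$ requires the diagonal factor to be invertible, so you must note that $Y\in R^*$ — immediate from $Y=-ac^{-1}Xbd^{-1}$, or from the fact that $YXa$ and $YXb$ are entries of $J^2(A)\in M_2^\star(R)$ — before concluding $J^2(A)\sim A$.
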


{\bf Remark } By Proposition~\ref{2b2case}, $J^{-1}(A)\sim J(A)$ whenever $A\in M_2(R)$ belongs to ${\rm dom} (J)={\rm dom} (J^{-1})$. Hence $J^2(A)\sim A$ whenever $A\in {\rm dom} (J)$. Thus the conclusion of the Conjecture holds for $2\times 2$ matrices if we take the second power of $J$ instead of the third.

\begin{proof}[Proof of Proposition~$\ref{2b2case}$] Lemma~\ref{2t2inv} implies that $A\in{\rm dom} (J)$ if and only if $A\in {\rm dom} (J^{-1})$ if and only if $A\in M_2^*(R)\cap M_2^*(R)$ if and only if the 5 elements $a$, $b$, $c$, $d$ and $db^{-1}-ca^{-1}$ are invertible in $R$. Since ${\rm dom} (J)={\rm dom} (J^{-1})$, $J$ maps ${\rm dom} (J)$ bijectively onto itself. Furthermore, (\ref{inva}) yields (\ref{zzz}), which can be easily rewritten as
\begin{align*}
J(A)&=\left(\begin{array}{cc}
(ac^{-1}d-b)d^{-1}c&(ac^{-1}d-b)(-b^{-1})b\\
(db^{-1}a-c)(-c^{-1})c&(db^{-1}a-c)a^{-1}b
\end{array}
\right)
\\
&=\left(\begin{array}{cc}
ac^{-1}d-b&0\\
0&db^{-1}a-c
\end{array}
\right)
\left(\begin{array}{cc}
d^{-1}&-b^{-1}\\
-c^{-1}&a^{-1}
\end{array}
\right)
\left(\begin{array}{cc}
c&0\\
0&b
\end{array}
\right).
\end{align*}
Since $J=J_2\circ J_1$ and $J^{-1}=J_1\circ J_2$, we have $J^{-1}=J_2\circ J\circ J_2$. Using this observation together with the above, one easily gets
\begin{align*}
J^{-1}(A)&=\left(\begin{array}{cc}
bd^{-1}(a^{-1}bd^{-1}-c^{-1})^{-1}&b(-b^{-1})(d^{-1}ca^{-1}-b^{-1})^{-1}\\
c(-c^{-1})(a^{-1}bd^{-1}-c^{-1})^{-1}&ca^{-1}(d^{-1}ca^{-1}-b^{-1})^{-1}
\end{array}
\right)
\\
&=\left(\begin{array}{cc}
b&0\\
0&c
\end{array}
\right)
\left(\begin{array}{cc}
d^{-1}&-b^{-1}\\
-c^{-1}&a^{-1}
\end{array}
\right)
\left(\begin{array}{cc}
(a^{-1}bd^{-1}-c^{-1})^{-1}&0\\
0&(d^{-1}ca^{-1}-b^{-1})^{-1}
\end{array}
\right).
\end{align*}
So, we can see that $J(A)\sim J^{-1}(A)$. \end{proof}

Note that the above two displays provide explicit formulas for two invertible diagonal matrices facilitating the relation $J^{-1}(A)\sim J(A)$ in the case $n=2$.

\section{The transformation $J$ on the level of orbits: definition of $\Phi$}

In this section we shall explain a reduction of the Conjecture to a 4-parameter problem, which is outlined in \cite{K}. The reduction itself works for matrices of arbitrary size, so we present it
as a reduction of $n \times n$ matrices to $(n-1) \times (n-1)$. It turns out that for any $M\in M_n^\star(R)$, there is a (non-unique if $R$ is noncommutative) matrix $A\sim M$ such that
$$
A\in \widehat M_n(R)=\bigl\{A=\{a_{j,k}\}\in M_n(R):a_{1,k}=a_{k,1}=1\ \ \text{for $1\leq k\leq n$}\bigr\}.
$$
That is, the entries in the first row and the first column of $A$ all equal $1$. More specifically, consider the (noncommutative) rational maps $\lal,\lar: M_n^\star(R)\to \widehat{M}_n(R)$ defined by the formulas

\begin{equation}\label{lal}
\lal(A)_{j,k}=a_{1,1}a_{j,1}^{-1}a_{j,k}a_{1,k}^{-1}
\end{equation}

\begin{equation}\label{lar}
\lar(A)_{j,k}=a_{j,1}^{-1}a_{j,k}a_{1,k}^{-1}a_{1,1}
\end{equation}

where $A=\{a_{j,k}\}$.

\begin{lemma}\label{4pA}
Let $A=\{a_{j,k}\}\in M^\star_n(R)$. Then $\lal(A)$ is the unique matrix $B\in \widehat{M}_n(R)$ such that there exist invertible diagonal $n\times n$ matrices $D_1$ and $D_2$ with the top left entry of $D_1$ being $1$ for which $B=D_1^{-1}AD_2$. Similarly, $\lar(A)$ is the unique matrix $C\in \widehat{M}_3(R)$ such that there exist invertible diagonal $n\times n$ matrices $D_3$ and $D_4$ with the top left entry of $D_4$ being $1$ for which $C=D_3^{-1}AD_4$. In particular,
\begin{equation}\label{lamlam}
\lal(A)\sim \lar(A)\sim A\ \ \text{for each}\ \ A\in M_3^\star(R).
\end{equation}
\end{lemma}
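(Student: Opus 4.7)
The plan is to exploit the fact that for diagonal matrices $D_1=\mathrm{diag}(\delta_1,\dots,\delta_n)$ and $D_2=\mathrm{diag}(\eta_1,\dots,\eta_n)$, the action on $A=\{a_{j,k}\}$ is entrywise:
$$
(D_1^{-1}AD_2)_{j,k}=\delta_j^{-1}a_{j,k}\eta_k.
$$
So the whole lemma reduces to pinning down two finite sequences of scalars in $R^*$, which is a completely elementary exercise once the normalisation conditions are used.

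For the first claim, I would first prove existence by explicitly producing $D_1$ and $D_2$: set $\delta_j:=a_{j,1}a_{1,1}^{-1}$ and $\eta_k:=a_{1,k}^{-1}$. Then $\delta_1=1$ as required, and
$$
\delta_j^{-1}a_{j,k}\eta_k=a_{1,1}a_{j,1}^{-1}a_{j,k}a_{1,k}^{-1}=\lal(A)_{j,k},
$$
which matches (\ref{lal}). For uniqueness, I would suppose $B=D_1^{-1}AD_2\in\widehat M_n(R)$ with $\delta_1=1$ and read off $D_1,D_2$ from the normalisation conditions in the order they become determined: the equation $B_{1,1}=\delta_1^{-1}a_{1,1}\eta_1=1$ together with $\delta_1=1$ forces $\eta_1=a_{1,1}^{-1}$; next, $B_{1,k}=a_{1,k}\eta_k=1$ forces $\eta_k=a_{1,k}^{-1}$ for every $k$; finally, $B_{j,1}=\delta_j^{-1}a_{j,1}\eta_1=1$ gives $\delta_j=a_{j,1}a_{1,1}^{-1}$ for every $j$. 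Thus $D_1$ and $D_2$ are uniquely determined, and hence so is $B$.

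The argument for $\lar$ is entirely parallel, except that it is the top left entry of $D_4$ (not $D_3$) which is normalised to $1$. Taking $\delta_j:=a_{j,1}$ and $\eta_k:=a_{1,k}^{-1}a_{1,1}$ yields $\eta_1=1$ and $\delta_j^{-1}a_{j,k}\eta_k=a_{j,1}^{-1}a_{j,k}a_{1,k}^{-1}a_{1,1}=\lar(A)_{j,k}$, giving existence; and the same three-step pinning-down (now started from $\eta_1=1$ and using $B_{1,1}=1$ to get $\delta_1=a_{1,1}$, then $B_{j,1}=1$ to get $\delta_j=a_{j,1}$, then $B_{1,k}=1$ to get $\eta_k=a_{1,k}^{-1}a_{1,1}$) delivers uniqueness. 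The concluding display (\ref{lamlam}) is then immediate: by construction $\lal(A)\sim A$ and $\lar(A)\sim A$, and transitivity of $\sim$ gives $\lal(A)\sim\lar(A)$.

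There is really no serious obstacle here; the only thing to be careful about is to respect the order of the noncommutative factors when verifying the entry formulas, and to check that the conditions $B_{1,k}=1$ and $B_{j,1}=1$ are consumed in an order consistent with the chosen normalisation ($\delta_1=1$ in the first case, $\eta_1=1$ in the second). Everything else is a bookkeeping computation.
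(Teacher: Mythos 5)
Your proposal is correct and follows essentially the same argument as the paper: both read off the diagonal entries of $D_1,D_2$ (resp.\ $D_3,D_4$) entrywise from the normalisation conditions $B_{1,k}=B_{k,1}=1$ together with the constraint on the top-left entry, and both arrive at the same formulas $\delta_j=a_{j,1}a_{1,1}^{-1}$, $\eta_k=a_{1,k}^{-1}$ (and the analogous ones for $\lar$). The only cosmetic difference is that you split existence and uniqueness into two passes while the paper derives the diagonal entries once, obtaining both simultaneously.
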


\begin{proof} For $1\leq s\leq 4$, let $D_s\in M_n(R)$ be diagonal with the entries $d_{s,j}\in R^*$ for $1\leq j\leq n$ on the main diagonal with the extra assumption $d_{1,1}=d_{4,1}=1$. Then the entries $b_{j,k}$ of $B=D_1^{-1}AD_2$ are given by $b_{j,k}=d_{1,j}^{-1}a_{j,k}d_{2,k}$. The condition $B\in \widehat{M}_n(R)$ is equivalent to $b_{1,k}=b_{k,1}=1$. Since $d_{1,1}=1$, the equations $b_{1,k}=1$ are equivalent to $d_{2,k}=a_{1,k}^{-1}$ for $1\leq k\leq n$. Now the  equations $b_{k,1}=1$ for $2\leq k\leq n$ hold if and only if $d_{1,k}=a_{k,1}a_{1,1}^{-1}$ for $2\leq k\leq n$. This
uniquely determines $D_1$ and $D_2$ with $d_{1,1}=1$ for which $B=D_1^{-1}AD_2\in \widehat{M}_n(R)$. Plugging this data back into $B=D_1^{-1}AD_2$, we find that $B=\lal(A)$ as defined in (\ref{lal}).

Next, the entries $c_{j,k}$ of $C=D_3^{-1}AD_4$ are given by $c_{j,k}=d_{3,j}^{-1}a_{j,k}d_{4,k}$. The condition $C\in \widehat{M}_3(R)$ is equivalent to $c_{1,k}=c_{k,1}=1$. Since $d_{4,1}=1$, the equations $c_{k,1}=1$ read $d_{3,k}=a_{k,1}$ for $1\leq k\leq n$. Now the  equations $c_{1,k}=1$ for $2\leq k\leq n$ hold if and only if $d_{4,k}=a_{1,k}^{-1}a_{1,1}$. This uniquely determines $D_3$ and $D_4$ with $d_{4,1}=1$ for which $B=D_3^{-1}AD_4\in \widehat{M}_n(R)$. Plugging this data back into $C=D_3^{-1}AD_4$, we find that $B=\lar(A)$ as defined in (\ref{lar}). Finally, (\ref{lamlam}) now follows directly from the definition of the relation $\sim$.
\end{proof}

The relation (\ref{lamlam}) guarantees that every $A\in M_n^\star(R)$ has a member of ${\widehat M}_n(R)$ in its ${\cal D}_{LR}$-orbit. When $R$ is noncommutative, one orbit may contain more than one matrix from ${\widehat M}_n(R)$.

We are going to characterise in the next lemma the matrices from ${\widehat M}_n(R)$, which  are at the same ${\cal D}_{LR}$-orbit.

\begin{lemma}\label{nonu} (Conjugacy)
Let $(D_1,D_2)\in {\cal D}_{LR}$ and $A,B\in \widehat{M}_n(R)$. Then the equality $B=D_1^{-1}AD_2$ holds if and only if there is $x\in R^*$ such that $D_1=D_2=xI$, where $I$ is the $n\times n$ identity matrix and $B=x^{-1}Ax$, where $M_n(R)$ is equipped with the obvious $R$-bimodule structure. In particular, $A\sim B$ if and only if there is $x\in R^*$ such that $B=x^{-1}Ax$.
\end{lemma}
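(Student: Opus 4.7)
The plan is a direct entrywise comparison exploiting that the first row and first column of both $A$ and $B$ consist entirely of $1$'s. Writing $D_1=\text{diag}(d_1,\ldots,d_n)$ and $D_2=\text{diag}(e_1,\ldots,e_n)$ with $d_j,e_k\in R^*$, the equation $B=D_1^{-1}AD_2$ reads
\begin{equation*}
b_{j,k}=d_j^{-1}a_{j,k}e_k\quad\text{for all }j,k.
\end{equation*}
I would use this relation with $j=1$ and with $k=1$ to pin down the diagonal entries.

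First, setting $j=1$ and using $a_{1,k}=b_{1,k}=1$ gives $1=d_1^{-1}e_k$, so $e_k=d_1$ for every $k$. Symmetrically, setting $k=1$ and using $a_{j,1}=b_{j,1}=1$ yields $1=d_j^{-1}e_1=d_j^{-1}d_1$, so $d_j=d_1$ for every $j$. Writing $x:=d_1\in R^*$, this shows $D_1=D_2=xI$, and the original equation becomes $b_{j,k}=x^{-1}a_{j,k}x$ for all $j,k$, i.e.\ $B=x^{-1}Ax$ in the $R$-bimodule sense.

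Conversely, if $x\in R^*$ and $B=x^{-1}Ax$ (entrywise), then taking $D_1=D_2=xI$ gives $D_1^{-1}AD_2=x^{-1}Ax=B$, and moreover $b_{1,k}=x^{-1}\cdot 1\cdot x=1$ and $b_{k,1}=x^{-1}\cdot 1\cdot x=1$, so $B\in\widehat{M}_n(R)$ automatically. Combining the two directions gives the first assertion; the characterization of the $\sim$-relation on $\widehat{M}_n(R)$ follows immediately, since $A\sim B$ means precisely that there exists some pair $(D_1,D_2)\in\mathcal{D}_{LR}$ with $B=D_1^{-1}AD_2$, and we have just shown any such pair is forced to be of the scalar form $(xI,xI)$.

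There is no real obstacle here; the only thing to be careful about is the noncommutativity, namely that one cannot simplify $d_j^{-1}a_{j,k}e_k$ by moving factors past each other, so each constraint $d_j=d_1$ and $e_k=d_1$ must be extracted separately from the rows/columns where one of the two factors becomes trivial. Once $D_1=D_2=xI$ is established, the passage from $D_1^{-1}AD_2$ to the bimodule action $x^{-1}(\cdot)x$ is tautological.
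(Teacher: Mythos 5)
Your proof is correct and follows essentially the same route as the paper: read off the first row and first column of $B=D_1^{-1}AD_2$ to force all diagonal entries of $D_1$ and $D_2$ to equal a single $x=d_1$, then note the equation collapses to $B=x^{-1}Ax$. The only difference is that you also spell out the (trivial) converse direction, which the paper leaves implicit.
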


\begin{proof} Let $x_1,\dots,x_n$ be the diagonal entries of $D_1$, $y_1,\dots,y_n$ be the diagonal entries of $D_2$,
$A=\{a_{j,k}\}$ and $B=\{b_{j,k}\}$. Since $A,B\in \widehat{M}_n(R)$, $a_{1,k}=b_{1,k}=a_{k,1}=b_{k,1}=1$  for $1\leq k\leq n$. Now looking at the first row and the first column of the matrix equality $B=D_1^{-1}AD_2$, we see that $x_1^{-1}y_k=x_k^{-1}y_1=1$ for $1\leq k\leq n$. Denoting $x=x_1$, we see that these equalities can only hold if $x_k=y_k=x$ for $1\leq k\leq n$. Thus $D_1=D_2=xI$. Now the equality $B=D_1^{-1}AD_2$ reads $B=x^{-1}Ax$.
\end{proof}

Now we will choose a 4-parameter  map $\Phi$, which is coincide with $J$ on the level of ${\cal D}_{LR}$-orbits, using the map $\lal$.

{\bf Definition} We define the noncommutative rational map $\Phi$ as acting on ${\widehat M}_n(R)$ according to the formula
\begin{equation*}
\Phi(A)=(J_2\circ \lal\circ J_1)(A)=J_2(\lal(A^{-1}))\ \ \text{with the domain}\ \
{\rm dom}(\Phi)={\rm dom} (J)\cap \widehat M_n(R)\cap M_n^\star(R).
\end{equation*}
Using (\ref{orb1}) and (\ref{lamlam}), we indeed  see that
\begin{equation}\label{phij}
\text{$\Phi(A)\sim J(A)$ for every $A\in{\rm dom}(\Phi)$.}
\end{equation}

Note that, although the map $\lal$ is certainly non-injective, the map $\Phi$ turns out to be injective and we can give an explicit formula for its inverse, we do it at the next section.

\section{The inverses of $\Phi$}

\begin{proposition}\label{phiinve} The map $\Phi'$ acting on ${\widehat M}_n(R)$ according to the formula
\begin{equation*}
\begin{array}{l}\Phi'(A)=(\lar\circ J_1\circ J_2)(A)=\lar(J^{-1}(A))\ \ \text{with the domain}\\
{\rm dom}(\Phi')=\widehat M_n(R)\cap \{M\in {\rm dom} (J^{-1}):J^{-1}(M)\in M_n^\star(R)\}
\end{array}
\end{equation*}
is the inverse of $\Phi$. That is, $\Phi$ maps ${\rm dom}(\Phi)$ bijectively onto ${\rm dom}(\Phi')$, $\Phi'$ maps ${\rm dom}(\Phi')$ bijectively onto ${\rm dom}(\Phi)$ and $\Phi\circ\Phi'$, $\Phi'\circ\Phi$ are identity maps on ${\rm dom}(\Phi')$ and ${\rm dom}(\Phi)$ respectively.
\end{proposition}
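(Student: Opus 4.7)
The plan is to check by direct computation that $\Phi'$ and $\Phi$ are two-sided inverses of each other; the bijection statements then follow automatically. The whole argument rests on three elementary remarks about $\lal$ and $\lar$ that read straight off the definitions (\ref{lal}) and (\ref{lar}): (i) both maps restrict to the identity on $\widehat M_n(R)$, since the normalising factors $A_{1,1}$, $A_{j,1}^{-1}$, $A_{1,k}^{-1}$ all equal $1$ when $A\in \widehat M_n(R)$; (ii) under conjugation by invertible diagonals $E,E'$, telescoping cancellations in the defining formulas give $\lal(E^{-1}ME')=E_{11}^{-1}\lal(M)E_{11}$ and $\lar(E^{-1}ME')=(E')_{11}^{-1}\lar(M)(E')_{11}$; (iii) if $C\in \widehat M_n(R)$ then $J_2(C)\in \widehat M_n(R)$, because inverting and transposing entries equal to $1$ leaves them equal to $1$. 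I will combine these with the already-noted identity $J_k(D_1^{-1}MD_2)=D_2^{-1}J_k(M)D_1$ and its immediate consequence $J^{\pm1}(D_1^{-1}MD_2)=D_1^{-1}J^{\pm1}(M)D_2$.

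For $\Phi'\circ\Phi=\mathrm{id}$ on $\mathrm{dom}(\Phi)$, I pick $A\in \mathrm{dom}(\Phi)$ and write $\lal(A^{-1})=D_1^{-1}A^{-1}D_2$ with $(D_1)_{11}=1$ (as delivered by the definition of $\lal$). Then $\Phi(A)=J_2(D_1^{-1}A^{-1}D_2)=D_2^{-1}J(A)D_1$, so $J^{-1}(\Phi(A))=D_2^{-1}AD_1$, and finally (ii) together with (i) gives
$$
\Phi'(\Phi(A))=\lar(D_2^{-1}AD_1)=(D_1)_{11}^{-1}\lar(A)(D_1)_{11}=\lar(A)=A.
$$
Dually, for $\Phi\circ\Phi'=\mathrm{id}$ on $\mathrm{dom}(\Phi')$, given $C\in \mathrm{dom}(\Phi')$ I write $\Phi'(C)=E_3^{-1}J^{-1}(C)E_4$ with $(E_4)_{11}=1$; inverting and using $J_1\circ J^{-1}=J_2$ yields $\Phi'(C)^{-1}=E_4^{-1}J_2(C)E_3$; then (ii), (i), and (iii) collapse $\lal(\Phi'(C)^{-1})$ to $\lal(J_2(C))=J_2(C)$, and one last application of $J_2$ gives $\Phi(\Phi'(C))=J_2(J_2(C))=C$.

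What remains is the domain bookkeeping: I must verify that $\Phi$ actually carries $\mathrm{dom}(\Phi)$ into $\mathrm{dom}(\Phi')$ and $\Phi'$ carries $\mathrm{dom}(\Phi')$ into $\mathrm{dom}(\Phi)$. That $\Phi(A),\Phi'(C)\in \widehat M_n(R)$ is immediate from the definition; the remaining invertibility conditions (membership in $M_n^*(R)\cap M_n^\star(R)$ and in $\mathrm{dom}(J)$ or $\mathrm{dom}(J^{-1})$) propagate through $\lal$, $\lar$ and through diagonal conjugation, since diagonal conjugation preserves both $*$-invertibility and $\star$-invertibility entrywise. I expect this finicky but essentially mechanical step to be the only real obstacle; the algebraic core of the argument is contained in the two collapsing chains above.
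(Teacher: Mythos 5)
Your proposal is correct, but it reaches the key cancellation by a genuinely different mechanism than the paper. The paper writes $\Phi'(\Phi(A))=(D_2D_3)^{-1}A(D_1D_4)$ using the existence/uniqueness statement of Lemma~\ref{4pA} and then invokes the Conjugacy Lemma~\ref{nonu} to force $D_2D_3=D_1D_4=xI$ with $x=1$; the symmetric argument handles $\Phi(\Phi'(B))$. You instead prove a covariance law for the normalisations, $\lal(E^{-1}ME')=E_{11}^{-1}\lal(M)E_{11}$ and $\lar(E^{-1}ME')=(E')_{11}^{-1}\lar(M)(E')_{11}$, note that both maps restrict to the identity on $\widehat M_n(R)$ and that $J_2$ preserves $\widehat M_n(R)$, and then the compositions collapse in one line each (via $J^{-1}(\Phi(A))=D_2^{-1}AD_1$ and $\Phi'(C)^{-1}=E_4^{-1}J_2(C)E_3$) without ever appealing to the Conjugacy Lemma. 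What your route buys is a shorter, more self-contained computation that also makes the domain bookkeeping essentially automatic: your two explicit intermediate formulas exhibit $J^{-1}(\Phi(A))$ and $\Phi'(C)^{-1}$ as two-sided diagonal multiples of $A$ and $J_2(C)$ respectively, from which all the required $*$- and $\star$-invertibility conditions follow at a glance (you leave this as a sketch, but it is genuinely mechanical given those formulas). What the paper's route buys is reuse of Lemma~\ref{nonu}, which it needs anyway elsewhere (e.g.\ in reducing $\sim$ on $\widehat M_3(R)$ to conjugacy), and it avoids introducing the covariance identities as separate facts. Both arguments are sound; yours is arguably the cleaner proof of this particular proposition.
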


\begin{proof} It suffices to show that for every $A\in {\rm dom}(\Phi)$ and every $B\in {\rm dom}(\Phi')$,
\begin{align}
\text{$\Phi(A)\in {\rm dom}(\Phi')$ and $\Phi'(\Phi(A))=A$;}\label{phipsi1}
\\
\text{$\Phi'(B)\in {\rm dom}(\Phi)$ and $\Phi(\Phi'(B))=B$}.\label{phipsi2}
\end{align}

Let $A\in {\rm dom}(\Phi)$. Then $A\in {\rm dom} (J)$, $A\in \widehat M_n(R)$ and $A\in M_n^\star(R)$. Since $A\in {\rm dom} (J)$, $A\in M^*_n(R)$ and $A^{-1}\in M_n^\star(R)$. By definition $\Phi(A)=J_2(\lal(A^{-1}))$. Obviously, $\Phi(A)\in \widehat M_n(R)\cap M_n^\star(R)$. By (\ref{lamlam}), $J_2(\Phi(A))=\lal(A^{-1})\sim A^{-1}$ and therefore $J_2(\Phi(A))\in M_n^*(R)$. Hence $\Phi(A)\in {\rm dom}(J^{-1})$. By (\ref{phij}), $J^{-1}(\Phi(A))\sim A$. Since $A\in M_n^\star(R)$,
$J^{-1}(\Phi(A))\in M_n^\star(R)$  and therefore $\Phi(A)\in {\rm dom}(\Phi')$. Furthermore,
$$
\Phi'(\Phi(A))=\lar(J_1(J_2(J_2(\lal(J_1(A))))))=\lar(J_1(\lal(J_1(A)))).
$$
By Lemma~\ref{4pA}, there exist invertible diagonal matrices $D_k$ for $1\leq k\leq 4$ such that the top-left entries of $D_1$ and $D_4$ equal $1$ and
$$
\Phi'(\Phi(A))=D_3^{-1}(D_1^{-1}A^{-1}D_2)^{-1}D_4=(D_2D_3)^{-1}A(D_1D_4).
$$
Since both $A$ and $\Phi'(\Phi(A))$ belong to $\widehat{M}_n(R)$, Lemma~\ref{nonu} implies that all diagonal entries of the diagonal matrices $D_2D_3$ and $D_1D_4$ equal to the same $x\in R^*$. Since the top-left entry of $D_1D_4$ is $1$, we have $x=1$. Hence $D_2D_3=D_1D_4=I$. This together with the above display shows that $\Phi'(\Phi(A))=A$ and proves (\ref{phipsi1}). Let $B\in {\rm dom}(\Phi')$. Then $B\in \widehat M_n(R)\cap M_n^\star(R)$, $J_2(B)\in M_n^*(R)$ and $(J_2(B))^{-1}\in M_n^\star(R)$. By definition $\Phi'(B)=\lar(J_1(J_2(B)))$. By (\ref{orb1}), $\Phi'(B)\sim (J_2(B))^{-1}$ and therefore $\Phi'(B)\in \widehat{M}_n(R)\cap M_n^\star(R)\cap M_n^*(R)$. Since $(\Phi'(B))^{-1}\sim J_2(B)$, $\Phi'(B)^{-1}\in M_n^\star(R)$. Thus $\Phi'(B)\in{\rm dom}(\Phi)$. Furthermore,
$$
\Phi(\Phi'(B))=J_2(\lal(J_1(\lar(J_1(J_2(B)))))).
$$
By Lemma~\ref{4pA}, there exist invertible diagonal matrices $D_k$ for $1\leq k\leq 4$ such that the top-left entries of $D_1$ and $D_4$ equal $1$ and
$$
\Phi(\Phi'(B))=J_2(D_1^{-1}J_1(D_3^{-1}J_1(J_2(B))D_4)D_2).
$$
Since $J_k(\Delta_1^{-1}M\Delta_2)=\Delta_2^{-1}J_k(M)\Delta_1$ for $k\in\{1,2\}$ and $(\Delta_1,\Delta_2)\in{\cal D}_{LR}$ and $J_k$ are involutions, we arrive to $\Phi(\Phi'(B))=(D_3D_2)^{-1}B(D_4D_1)$. The same application of Lemma~\ref{nonu} as above yields $D_2D_3=D_1D_4=I$. Thus $\Phi(\Phi'(B))=B$ and proves (\ref{phipsi2}).
\end{proof}

From now on, we shall just write $\Phi^{-1}$ instead of $\Phi'$ defined in Proposition~\ref{phiinve}. Note that the relation (\ref{phij}) implies that
\begin{equation}\label{itephij}
\text{$\Phi^k(A)\sim J^k(A)$ for every $k\in\Z$ and every $A\in{\rm dom}(\Phi^k)$.}
\end{equation}

We introduce also another version of $\Phi^{-1}$, the map $\Psi$ defined as follows:
$$
\Psi=J_2\circ \Phi\circ J_2.
$$

The maps $\Psi$ and $\Phi^{-1}$ are the same at the level of orbits, but different at the level of elements.

\begin{proposition}\label{ph-1m} The maps $\Psi$ and $\Phi^{-1}$ are equivalent in the following sense:
they have the tsame domain, which they map bijectively onto ${\rm dom}(\Phi)$ and

$\Phi^{-1}(A)\sim \Psi(A,)$ for every $A \in
{\rm dom}(\Psi)={\rm dom}(\Phi^{-1})$.
\end{proposition}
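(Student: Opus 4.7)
The plan is to collapse the triple composition $J_2 \circ \Phi \circ J_2$ using the involutivity of $J_2$, and then recognise that what remains differs from $\Phi^{-1}$ only by swapping $\lal$ with $\lar$. Specifically, using the definition $\Phi(M) = J_2(\lal(M^{-1}))$ together with $J_2^2 = {\rm id}$, I would compute
$$
\Psi(A) = J_2(\Phi(J_2(A))) = J_2(J_2(\lal(J_2(A)^{-1}))) = \lal(J_2(A)^{-1}) = \lal(J^{-1}(A)),
$$
and set this beside $\Phi^{-1}(A) = \lar(J^{-1}(A))$ given in Proposition~\ref{phiinve}. Applying Lemma~\ref{4pA} to $M = J^{-1}(A)$, which lies in $M_n^\star(R)$ because $A \in {\rm dom}(\Phi^{-1})$, yields $\lal(M) \sim M \sim \lar(M)$, hence $\Psi(A) \sim \Phi^{-1}(A)$.

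For the domain equality, I would show that $J_2$ restricts to an involutive bijection between ${\rm dom}(\Phi)$ and ${\rm dom}(\Phi^{-1})$. Both domains are contained in $\widehat M_n(R) \cap M_n^\star(R)$, a set preserved by $J_2$ since $J_2$ inverts entries componentwise and fixes $1$'s. The remaining two conditions for ${\rm dom}(\Phi)$ are $A \in M_n^*(R)$ and $A^{-1} \in M_n^\star(R)$, whereas those for ${\rm dom}(\Phi^{-1})$ are $J_2(A) \in M_n^*(R)$ and $J_2(A)^{-1} \in M_n^\star(R)$; replacing $A$ by $J_2(A)$ (and using $J_2^2 = {\rm id}$) converts one pair of conditions into the other, giving the desired bijection $J_2 : {\rm dom}(\Phi) \leftrightarrow {\rm dom}(\Phi^{-1})$. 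The domain of $\Psi = J_2 \circ \Phi \circ J_2$ is then $\{A \in M_n^\star(R) : J_2(A) \in {\rm dom}(\Phi)\}$, the outermost $J_2$ being automatically defined since $\Phi$ takes values in $M_n^\star(R)$, and the bijection just established identifies this set with ${\rm dom}(\Phi^{-1})$.

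Bijectivity of $\Psi$ onto ${\rm dom}(\Phi)$ then follows as a composition of three bijections: the restriction of $J_2$ from ${\rm dom}(\Phi^{-1})$ to ${\rm dom}(\Phi)$, then $\Phi$ from ${\rm dom}(\Phi)$ to ${\rm dom}(\Phi^{-1})$ (Proposition~\ref{phiinve}), then $J_2$ once more from ${\rm dom}(\Phi^{-1})$ to ${\rm dom}(\Phi)$. I do not expect any serious obstacle; the only delicate point is keeping straight the distinction between $*$-invertibility of matrices and $\star$-invertibility of entries as one passes through $J_2$, but once the symmetry $J_2 : {\rm dom}(\Phi) \leftrightarrow {\rm dom}(\Phi^{-1})$ is pinned down, the rest is purely mechanical.
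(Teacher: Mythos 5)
Your argument is correct, and it is in fact more complete than the paper's own proof, which only sketches the orbit-level chain $\Psi(A)=J_2(\Phi(J_2(A)))\sim J_2(J(J_2(A)))=J^{-1}(A)\sim\Phi^{-1}(A)$ using $\Phi\sim J$ and $J^{-1}=J_2\circ J\circ J_2$, and says nothing at all about the domain equality or the bijectivity onto ${\rm dom}(\Phi)$. Your route is slightly sharper: by collapsing $J_2\circ J_2$ you obtain the exact identity $\Psi=\lal\circ J_1\circ J_2$, which sits directly beside $\Phi^{-1}=\lar\circ J_1\circ J_2$ from Proposition~\ref{phiinve}, so the equivalence reduces to the single application of Lemma~\ref{4pA} to $M=J^{-1}(A)\in M_n^\star(R)$ rather than to repeated invocations of the orbit-compatibility relation (\ref{orb1}). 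What this buys is an explicit description of $\Psi$ (useful later, e.g.\ when the conjugating element between $\Psi(A)$ and $\Phi^{-1}(A)$ is computed via Lemma~\ref{nonu}), whereas the paper's orbit-level argument is shorter but leaves the domain statement unproven. Your verification that $J_2$ interchanges the two lists of defining conditions for ${\rm dom}(\Phi)$ and ${\rm dom}(\Phi^{-1})$, and hence that ${\rm dom}(\Psi)={\rm dom}(\Phi^{-1})$ with $\Psi$ a composition of three bijections, is exactly the missing content and is carried out correctly.
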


\begin{proof}
 By our definition of $\Phi(A)$, we have $\Phi(A)\sim J(A)$, and hence also $\Phi(A)^{-1}\sim J(A)^{-1}$.
As we seen already $J^{-1}=J_2 \circ J \circ J_2$. This means that $\Psi(A) \sim J_2 \circ J \circ J_2(A).$

\end{proof}

The above proposition together with (\ref{orb1}) and (\ref{phij}) yields
\begin{equation}\label{psij}
\text{$\Psi(A)\sim \Phi^{-1}(A)\sim J^{-1}(A)$ for every $A\in{\rm dom}(\Phi^{-1})={\rm dom}(\Psi)$.}
\end{equation}

\subsection{Permutations and the symmetry lemma}

Here we consider the group of pairs $(P_1,P_2)$ of $n\times n$ permutation matrices (=the direct product of two copies of the group of $n\times n$ permutation matrices with respect to the usual matrix multiplication), denote it by  ${\cal P}_{LR}$. Clearly, ${\cal P}_{LR}$ is a subgroup of $M_n^*(R)\times M^*_n(R)$ isomorphic to $S_n\times S_n$ and ${\cal P}_{LR}$ acts on $M_n(R)$ according to the rule
$$
(P_1,P_2)(M)=P_1^{-1}MP_2.
$$
It is easy to see that $J_k(P_1^{-1}MP_2)=P_2^{-1}J_k(M)P_1$ for $k\in\{1,2\}$ and for each $(P_1,P_2)\in {\cal P}_{LR}$. Thus both $J_1$ and $J_2$ act on ${\cal P}_{LR}$-orbits. Moreover, $J(P_1^{-1}MP_2)=P_1^{-1}J(M)P_2$. That is, $J$  commutes with the multiplication (on the right or on the left) by a permutation matrix.

 The $\widehat{M}_n(R)$ is non-invariant with respect to the ${\cal P}_{LR}$-action. However it is invariant under the action of the subgroup ${\cal P}^0_{LR}$ of pairs of permutation matrices, which leaves the first basic vector invariant.

\begin{lemma}\label{kley} The sets ${\rm dom}(\Phi)$ and ${\rm dom}(\Psi)$ are stable under the ${\cal P}^0_{LR}$-action and $\Phi$ and $\Psi$ commute with the ${\cal P}^0_{LR}$-action. \end{lemma}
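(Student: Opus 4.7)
I split the lemma into two parts: domain stability and commutation on the (common) domain. Both reduce, after the already-noted behaviour of $J_1$ and $J_2$ under the ${\cal P}_{LR}$-action, to a single identity for the normalisation map $\lal$, namely
\begin{equation*}
\lal(P_1^{-1} A P_2) = P_1^{-1} \lal(A) P_2 \quad \text{for every } (P_1,P_2)\in{\cal P}^0_{LR}\text{ and every }A\in M_n^\star(R).
\end{equation*}
For stability I note that $M_n^*(R)$ and $M_n^\star(R)$ are trivially invariant under multiplication by permutation matrices on either side (neither the set of entries nor matrix invertibility is affected); $\widehat M_n(R)$ is preserved precisely because elements of ${\cal P}^0_{LR}$ fix the first coordinate, so the all-ones first row and column survive; and ${\rm dom}(J)$, ${\rm dom}(J^{-1})$ are preserved via the identities $(P_1^{-1}MP_2)^{-1}=P_2^{-1}M^{-1}P_1$ and $J_k(P_1^{-1}MP_2)=P_2^{-1}J_k(M)P_1$, which transport their defining conditions. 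Intersecting these gives stability of ${\rm dom}(\Phi)$ and ${\rm dom}(\Psi)$.

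\textbf{The key identity.} Writing $P_i$ as the matrix of a permutation $\sigma_i\in S_n$ with $\sigma_1(1)=\sigma_2(1)=1$, the entries of $B=P_1^{-1}AP_2$ are $b_{j,k}=a_{\sigma_1(j),\sigma_2(k)}$. Plugging straight into the defining formula (\ref{lal}) and using $\sigma_1(1)=\sigma_2(1)=1$,
\begin{equation*}
\lal(B)_{j,k}=b_{1,1}b_{j,1}^{-1}b_{j,k}b_{1,k}^{-1}=a_{1,1}a_{\sigma_1(j),1}^{-1}a_{\sigma_1(j),\sigma_2(k)}a_{1,\sigma_2(k)}^{-1}=\lal(A)_{\sigma_1(j),\sigma_2(k)},
\end{equation*}
which is precisely the $(j,k)$-entry of $P_1^{-1}\lal(A)P_2$.

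\textbf{Assembling the commutations.} Since $\Phi=J_2\circ\lal\circ J_1$, for $A\in{\rm dom}(\Phi)$ and $(P_1,P_2)\in{\cal P}^0_{LR}$ I apply $J_1$ (which swaps the pair to $(P_2,P_1)$, still in ${\cal P}^0_{LR}$), then $\lal$ (commuting with the swapped pair by the previous step), then $J_2$ (swapping back), obtaining $\Phi(P_1^{-1}AP_2)=P_1^{-1}\Phi(A)P_2$. For $\Psi=J_2\circ\Phi\circ J_2$ conjugation by $J_2$ transfers the commutation from $\Phi$ to $\Psi$ with no further work. There is no serious obstacle, but it is worth emphasising why we restrict to ${\cal P}^0_{LR}$ rather than the full ${\cal P}_{LR}$: a permutation moving the first coordinate would destroy both membership in $\widehat M_n(R)$ and the commutation with $\lal$, so the restriction is essential.
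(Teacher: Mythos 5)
Your argument is correct and follows the same strategy as the paper: reduce everything to the equivariance of $\lal$ under the ${\cal P}^0_{LR}$-action and then chain through $J_1$ and $J_2$ (and conjugate by $J_2$ for $\Psi$). The only difference is in the central sub-step: you verify $\lal(P_1^{-1}AP_2)=P_1^{-1}\lal(A)P_2$ by direct entry computation from formula (\ref{lal}), whereas the paper derives it from the uniqueness clause of Lemma~\ref{4pA} together with the observation $D_2P_1=P_1\overline{D}_2$ — both are valid and of comparable length.
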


\begin{proof} The stability of ${\rm dom}(\Phi)$ and ${\rm dom}(\Psi)$ is straightforward. Let $A\in{\rm dom}(\Phi)$ and
$(P_1,P_2)\in {\cal P}^0_{LR}$. In order to show that $\Phi$ commutes with the ${\cal P}^0_{LR}$-action, we have to verify that $\Phi(P_1^{-1}AP_2)=P_1^{-1}\Phi(A)P_2$. By definition $\Phi(A)=J_2(\lal(A^{-1}))$. By Lemma~\ref{4pA}, there are unique invertible diagonal matrices $D_1$ and $D_2$ in $M_n(R)$ such that the upper-left entry of $D_1$ is $1$ and $\lal(A^{-1})=D_1^{-1}A^{-1}D_2$.
Then
$$
P_1^{-1}\Phi(A)P_2=P_1^{-1}J_2(\lal(A^{-1}))P_2=J_2(P_2^{-1}D_1^{-1}A^{-1}D_2P_1).
$$
Now observe that $D_2P_1=P_1\overline{D}_2$, where $\overline{D}_2$ is the diagonal matrix obtained from $D_2$ by the permutation of diagonal entries facilitated by $P_1$. Similarly, $D_1P_2=P_2\overline{D}_1$, where $\overline{D}_1$ is the diagonal matrix obtained from $D_1$ by the permutation provided by $P_2$. Thus
$$
P_1^{-1}\Phi(A)P_2=J_2(\overline{D}_1^{-1}P_2^{-1}A^{-1}P_1\overline{D}_2).
$$
Since $\overline{D}_1^{-1}P_2^{-1}A^{-1}P_1\overline{D}_2\in \widehat{M}_n(R)$  and the first diagonal entry of $\overline{D}_1$ is $1$ ($P_2$ leaves the first basic vector invariant), the uniqueness part of Lemma~\ref{4pA} yields
$\overline{D}_1^{-1}P_2^{-1}A^{-1}P_1\overline{D}_2=\lal(P_2^{-1}A^{-1}P_1)=\lal((P_1^{-1}AP_2)^{-1})$. Thus
$$
P_1^{-1}\Phi(A)P_2=J_2(\lal((P_1^{-1}AP_2)^{-1}))=\Phi(P_1^{-1}AP_2)
$$
and $\Phi$ commutes with the action of ${\cal P}^0_{LR}$.
Next, let $A\in{\rm dom}(\Psi)$ and $(P_1,P_2)\in {\cal P}^0_{LR}$. Then using the fact that $\Phi$ commutes with ${\cal P}^0_{LR}$-action, we get
\begin{align*}
P_1^{-1}\Psi(A)P_2&=P_1^{-1}J_2(\Phi(J_2(A))P_2)=J_2(P_2^{-1}\Phi(J_2(A))P_1)
\\
&=J_2(\Phi(P_2^{-1}J_2(A)P_1))=J_2(\Phi(J_2(P_1^{-1}AP_2)))=\Psi(P_1^{-1}AP_2)
\end{align*}
and $\Phi$ commutes with the action of ${\cal P}^0_{LR}$.
\end{proof}

In the $3\times 3$ case ${\cal P}^0_{LR}$ is the direct product of $2$ copies of $S_2=\Z_2$ and therefore is the Klein 4-group. We can list its elements. Let
$$
S=\left(\begin{array}{ccc}
1&0&0\\
0&0&1\\
0&1&0
\end{array}\right).
$$
Then in the case $n=3$, ${\cal P}^0_{LR}=\{(I,I),(I,S),(S,I),(S,S)\}=K$. Further on, we shall use the symbol $K$ to denote ${\cal P}^0_{LR}$ in the $3\times 3$ case. We state the following easy corollary of Lemma~\ref{kley} in the case $n=3$.

\begin{corollary}\label{kley1} (Symmetry Lemma) Let $n=3$ and $\Omega$ be an integer power of either $\Phi$ or $\Psi$.
For
\begin{equation*}
A=\left(\begin{array}{ccc}
1&1&1\\
1&a&b\\
1&c&d
\end{array}\right)\in {\rm dom}(\Omega)\subset {\widehat M}_3(R),\ \ \text{we denote}\ \
\Omega(A)=\left(\begin{array}{ccc}
1&1&1\\
1&\alpha&\beta\\
1&\gamma&\delta
\end{array}\right).
\end{equation*}
This makes $\alpha$, $\beta$, $\gamma$ and $\delta$ functions of $a$, $b$, $c$ and $d$. In particular,
$\alpha=f(a,b,c,d)$ for some map $f$. Then
$$
\beta=f(b,a,d,c),\ \ \gamma=f(c,d,a,b)\ \ \text{and}\ \ \delta=f(d,c,b,a).
$$
\end{corollary}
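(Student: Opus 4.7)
The plan is to read off the three identities directly from Lemma~\ref{kley}, which asserts that $\Phi$ and $\Psi$ commute with the action of $K={\cal P}^0_{LR}$ on $\widehat M_3(R)$, combined with an explicit inspection of the $K$-orbit of $A$. The first ingredient I would check is that every integer power $\Omega$ of $\Phi$ or $\Psi$ inherits this commutation: for positive powers it follows by iteration, and for $\Phi^{-1}$ it follows from the observation that if $B=\Phi(A)$ then $\Phi(P_1^{-1}AP_2)=P_1^{-1}BP_2$, so applying $\Phi^{-1}$ gives $\Phi^{-1}(P_1^{-1}BP_2)=P_1^{-1}\Phi^{-1}(B)P_2$; the argument for $\Psi^{-1}$ is identical.

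Next I would spell out the $K$-orbit of $A$. The three nontrivial elements $(I,S)$, $(S,I)$ and $(S,S)$ of $K$ send $A$ to $AS$, $SA$ and $SAS$ respectively, and a one-line calculation shows that all three matrices still belong to $\widehat M_3(R)$, with new $(a,b,c,d)$-parameters $(b,a,d,c)$, $(c,d,a,b)$ and $(d,c,b,a)$. Since ${\rm dom}(\Omega)$ is $K$-stable by Lemma~\ref{kley}, these three matrices also lie in ${\rm dom}(\Omega)$.

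Finally, I would extract the $(2,2)$-entry of $\Omega$ applied to each of these matrices in two different ways. By $K$-equivariance, $\Omega(AS)=\Omega(A)S$, $\Omega(SA)=S\Omega(A)$ and $\Omega(SAS)=S\Omega(A)S$, so the respective $(2,2)$-entries are $\beta$, $\gamma$ and $\delta$. On the other hand, the defining rule $\alpha=f(a,b,c,d)$ applied with the parameters of the three permuted matrices gives $f(b,a,d,c)$, $f(c,d,a,b)$ and $f(d,c,b,a)$. Equating the two expressions in each case yields the three identities. There is no real obstacle; the only point requiring a moment's care is the extension of $K$-equivariance from $\Phi$ and $\Psi$ to their inverses, which as noted is automatic for any bijective equivariant map.
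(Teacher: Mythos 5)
Your proposal is correct and follows essentially the same route as the paper: both deduce the result from Lemma~\ref{kley} by observing that the three nontrivial elements of $K$ permute the quadruple $(a,b,c,d)$ as $(b,a,d,c)$, $(c,d,a,b)$, $(d,c,b,a)$ and then reading off the corresponding entry of $\Omega(A)$ via equivariance. Your explicit justification that equivariance passes to negative powers is a point the paper leaves implicit, but it is the same argument.
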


\begin{proof} Lemma~\ref{kley} implies that ${\rm dom}(\Omega)$ is invariant under the $K$-action and $\Omega$ commutes with the $K$-action. On the level of quadruples $(a,b,c,d)$ the elements of $K$ act by permutations in the following way. Obviously, $(I,I)$ corresponds to the identity permutation, $(I,S)$ corresponds to the permutation $(a,b,c,d)\mapsto (c,d,a,b)$, $(S,I)$ corresponds to the permutation $(a,b,c,d)\mapsto (b,a,d,c)$ and $(S,S)$ corresponds to the permutation $(a,b,c,d)\mapsto (d,c,b,a)$. The result immediately follows.
\end{proof}

\subsection{Reformulation of the conjecture and the integrability result (singularity confinement) in terms of 4-parameter maps}

For $3\times 3$ matrices, we can relate the domains of the powers of $\Phi$ and $J$ in a following way. Let
\begin{equation}\label{SS}
{\cal S}=\left\{M\in M_3(R):\begin{array}{l}
\text{all square submatrices of $M$ are}\\
\text{invertible and $J_2(M)$ is invertible}
\end{array}\right\}\ \ \text{and}\ \
\widehat{\cal S}={\cal S}\cap\widehat{M}_3(R).
\end{equation}

Note that the set $\cal S$ defined in (\ref{SS}) is invariant under the actions of ${\cal P}_{LR}$ and ${\cal D}_{LR}$. The following theorem describes the domains of the powers of $J$ and $\Phi$.

\begin{theorem}\label{DOMA} Let $n=3$ and $k\geq 2$. Then
\begin{equation}\label{doJPhi}
{\cal S}={\rm dom}(J)\cap {\rm dom}(J^{-1})={\rm dom}(J^k)\ \ \text{and}\ \
\widehat{{\cal S}}={\rm dom}(\Phi)\cap {\rm dom}(\Phi^{-1})={\rm dom}(\Phi^k).
\end{equation}
Furthermore, $J\bigr|_{\cal S}$ is a bijection from ${\cal S}$ onto itself and $\Phi\bigr|_{\widehat{\cal S}}$ is a bijection from $\widehat{{\cal S}}$ onto itself.
\end{theorem}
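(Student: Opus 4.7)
The plan is to prove the claims for $J$ first and then lift them to $\Phi$ using the reduction in Lemma~\ref{4pA}, the orbit-invariance (\ref{itephij}), and the bijectivity in Proposition~\ref{phiinve}. Four assertions suffice for the $J$-side: (a) ${\cal S}\subseteq {\rm dom}(J)\cap{\rm dom}(J^{-1})$; (b) the reverse inclusion; (c) $J$ (and hence $J^{-1}$) maps ${\cal S}$ into itself; (d) ${\rm dom}(J^2)\subseteq {\rm dom}(J)\cap{\rm dom}(J^{-1})$. Together with the trivial chain ${\rm dom}(J^k)\subseteq{\rm dom}(J^2)$ for $k\geq 2$, these close the loop and force ${\rm dom}(J^k)={\cal S}$; bijectivity of $J|_{\cal S}$ then follows since $J^{-1}$ is defined on all of ${\cal S}$.

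For (a) and (b), the conditions that $M$ and $J_2(M)$ are $*$-invertible and that the entries of $M$ are invertible are shared between ${\cal S}$ and ${\rm dom}(J)\cap{\rm dom}(J^{-1})$; what requires proof is the equivalence of ``all $2\times 2$ submatrices of $M$ invertible'' with ``all entries of $M^{-1}$ invertible''. Both directions follow from a $3\times 3$ block Schur-complement identity: for each $(p,q)$, block-elimination of the single entry $M_{pq}$ (after permuting it to the top-left) shows that, provided the $2\times 2$ minor $M^{pq}$ obtained by removing row $p$ and column $q$ is invertible, $M$ is invertible if and only if the quasideterminant $|M|_{pq}:=M_{pq}-r_p^q(M^{pq})^{-1}c_q^p$ is invertible, in which case $(M^{-1})_{qp}=|M|_{pq}^{-1}$. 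Reading the identity forwards yields (a); swapping the roles of $M$ and $M^{-1}$ yields (b); throughout, Lemma~\ref{2t2inv}(\ref{first2}) transfers $2\times 2$ invertibility between $M$ and $J_2(M)$.

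For (c), the crucial observation is that ${\cal S}$ is both ${\cal D}_{LR}$-invariant (immediate from the definition) and $J_2$-invariant (the $1\times 1$ conditions are preserved by entrywise inversion, the $2\times 2$ conditions by Lemma~\ref{2t2inv}(\ref{first2}), and the two $3\times 3$ conditions on $M$ and $J_2(M)$ swap). Using $\lal(M)\sim M$ (Lemma~\ref{4pA}) together with $\Phi(A)\sim J(A)$ from (\ref{phij}), the inclusion $J({\cal S})\subseteq {\cal S}$ reduces to verifying $\Phi(\widehat{{\cal S}})\subseteq\widehat{{\cal S}}$; this is a concrete $4$-parameter statement that can be established by evaluating the explicit formula for $\Phi$ in the coordinates $a,b,c,d$ of a matrix in $\widehat{M}_3(R)$ and checking that each of the finitely many invertibility conditions cutting out $\widehat{{\cal S}}$ is preserved by $\Phi$.

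For (d), if $M\in{\rm dom}(J^2)$ then $J(M)\in{\rm dom}(J)$; combined with the automatic facts that the entries of $J(M)$ are inverses of entries of $M^{-1}$ (hence invertible) and that $J_2(J(M))=M^{-1}$ is $*$-invertible, this places $J(M)$ in ${\rm dom}(J^{-1})$ as well, so by (a)/(b), $J(M)\in{\cal S}$. Applying $J_2$-invariance of ${\cal S}$ yields $M^{-1}\in{\cal S}$, and then applying (c) to $M^{-1}$ produces $J(M^{-1})=J_2(M)\in{\cal S}$, supplying the missing $*$-invertibility of $J_2(M)$; combined with the entry-invertibility of $M$ extracted from $M^{-1}\in{\cal S}$ via (a), this puts $M$ in ${\cal S}$. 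The $\Phi$-statements are then extracted by intersecting with $\widehat{M}_3(R)$ and combining (\ref{itephij}) with Proposition~\ref{phiinve}. The principal obstacle is (c): checking that each invertibility condition cutting out $\widehat{{\cal S}}$ is preserved under the explicit rational map $\Phi$ in four noncommutative variables is a delicate computation that repeatedly invokes Lemma~\ref{2t2inv} as its $2\times 2$ base case.
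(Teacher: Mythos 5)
Your overall architecture is the same as the paper's: identify ${\cal S}$ with ${\rm dom}(J)\cap{\rm dom}(J^{-1})$, show $J$ maps ${\cal S}$ bijectively onto itself, and then squeeze ${\rm dom}(J^k)$ between these. Your steps (a), (b) and (d) are sound; in fact your Schur-complement/quasideterminant treatment of the equivalence between ``all $2\times 2$ submatrices of $M$ are invertible'' and ``all entries of $M^{-1}$ are invertible'' (using that for an invertible block matrix a diagonal block is invertible iff the complementary block of the inverse is) is a clean alternative to the paper's route, which instead normalizes to $\widehat{M}_3(R)$ and reads the relevant $2\times 2$ block of $M^{-1}$ off an explicit inversion formula (Lemma~\ref{inve}).

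However, there is a genuine gap at step (c), and it is exactly where the entire difficulty of the theorem (the singularity confinement) is concentrated. Of the conditions cutting out $\widehat{\cal S}$, all but one do transfer routinely to $\Phi(A)$: the entries of $\Phi(A)\sim J_2(A^{-1})$ are inverses of entries of $A^{-1}$; the $2\times 2$ submatrices of $\Phi(A)$ are handled by Lemma~\ref{2t2inv} from those of $A^{-1}$; and $J_2(\Phi(A))=\lal(A^{-1})\sim A^{-1}$ is invertible. The one condition that does \emph{not} follow from any such soft argument is the invertibility of $\Phi(A)$ itself, i.e.\ of $J_2(M^{-1})=J(M)$. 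By Lemma~\ref{subma} this amounts to the invertibility of the new ``determinant'' $q=(d'-c')^{-1}(c'-1)-(b'-a')^{-1}(a'-1)$ built from the primed entries, and there is no a priori reason for this element to be invertible. The paper's Lemma~\ref{invphi} proves it by producing an explicit conjugating element $w$ and the nonobvious identity $q=-w^{-1}\bigl[(c-1)(d-c)^{-1}d-(a-1)(b-a)^{-1}b\bigr]w$, whose right-hand side is invertible precisely because $J_2(A)$ is (via (\ref{J2AINV})); the proof hinges on the coincidence $u=w$ of two differently-derived conjugators. Your proposal compresses all of this into the assertion that the finitely many conditions ``can be established by evaluating the explicit formula for $\Phi$,'' which describes a computation you have not performed and which is not a mechanical verification: without discovering the conjugation identity, one cannot certify the invertibility of $q$ at all. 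Until that identity (or an equivalent) is supplied, the inclusion $\Phi(\widehat{\cal S})\subseteq\widehat{\cal S}$, and with it the whole theorem, remains unproved.
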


The effect that when acting on $3\times 3$ matrices, the domains of $J^k$ or $\Phi^k$ stop changing as $k$ grows is known as the {\it confinement of singularities}. If we iterate a generic rational map, even in the commutative case, the domain never stops shrinking. In the commutative case it corresponds to the fact that ever new irreducible factors occur in the denominator of the canonical expressions for the powers of our map. The singularities confinement is exactly the opposite of this effect: the domains of the powers stabilize.
The singularity confinement is considered as a strong integrability type property.

Now we can state the following theorem, which easily implies Theorem~\ref{main1}.

\begin{theorem}\label{main2} For every $A\in \widehat{\cal S}$, there is $x=x(A)\in R^*$ such that $\Phi^2(A)=x^{-1}\Phi^{-1}(A)x$. In particular, $\Phi^2(A)\sim \Phi^{-1}(A)$.
\end{theorem}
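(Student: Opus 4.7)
Since both $\Phi^2(A)$ and $\Phi^{-1}(A)$ lie in $\widehat M_3(R)$, Lemma~\ref{nonu} makes the existence of $x\in R^*$ with $\Phi^2(A)=x^{-1}\Phi^{-1}(A)x$ precisely equivalent to the orbit statement $\Phi^2(A)\sim \Phi^{-1}(A)$. So the plan is constructive: parametrize $A\in\widehat{\cal S}$ by its four free entries
$$
A=\begin{pmatrix} 1 & 1 & 1\\ 1 & a & b\\ 1 & c & d\end{pmatrix},
$$
compute $\Phi^2(A)$ and $\Phi^{-1}(A)$ as explicit $4$-parameter rational expressions, and read off $x\in R^*$ from a single entry.

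The first step is to write $\Phi$ in $4$-parameter form. Using the block decomposition with top-left scalar $1$ and lower-right block $\bigl(\begin{smallmatrix}a & b\\ c & d\end{smallmatrix}\bigr)$, the inverse $A^{-1}$ is produced from a Schur-complement step of exactly the type underlying Lemma~\ref{2t2inv}. Applying $\lal$ to $A^{-1}$ collapses the outer diagonal factors that appear in the Schur rewriting, and the closing $J_2$ is entrywise inversion plus transpose. Combining these three operations yields a closed formula for $\Phi(A)$ as a quadruple $(\alpha,\beta,\gamma,\delta)$ in $\widehat M_3(R)$.

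At this stage the Symmetry Lemma (Corollary~\ref{kley1}) is the crucial simplification: for any power of $\Phi$ or $\Psi$, the full matrix is determined by a single rational function $f(a,b,c,d)$ of four variables, since the other three entries are obtained by the permutations $(a,b,c,d)\mapsto(b,a,d,c),\ (c,d,a,b),\ (d,c,b,a)$. I would exploit this to write down only the $(2,2)$-entries $f^{(2)}(a,b,c,d)$ of $\Phi^2(A)$ and $g(a,b,c,d)$ of $\Phi^{-1}(A)$, computing $\Phi^{-1}$ via $\Psi=J_2\circ\Phi\circ J_2$ from Proposition~\ref{ph-1m} and then diagonal-renormalizing by $\lal$ (the renormalizing unit is the candidate $x$, by Lemma~\ref{nonu}). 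Once the $(2,2)$-entries are matched up to conjugation by $x$, the Symmetry Lemma propagates the match to the other three entries automatically.

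The main obstacle is the final verification in the free field. No normal form is available for free-field rational expressions (only the word problem is decidable \cite{CR}), so one cannot simply ``evaluate and compare''. My tactic would be to track throughout all intermediate stages the noncommutative Schur-complement factors of the form $\delta=db^{-1}-ca^{-1}$ that already governed the $2\times 2$ analysis in Proposition~\ref{2b2case}, together with their $J_2$-conjugates that enter at each iteration. Every matrix in the chain $A\to\Phi(A)\to\Phi^2(A)$ and $A\to\Phi^{-1}(A)$ should then admit a factorization of the form (diagonal)$\cdot$(canonical core)$\cdot$(diagonal) in which the core depends only on a short, permutation-symmetric list of such Schur factors. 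With this organization, the target conjugation identity collapses to a small number of elementary cancellations between the same Schur factors on both sides, which is tractable to verify by hand and provides both the explicit $x$ and the proof of Theorem~\ref{main2}.
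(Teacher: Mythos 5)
Your plan follows essentially the same route as the paper: reduce to matrices in $\widehat M_3(R)$ so that Lemma~\ref{nonu} turns the orbit statement into conjugation by a single unit, derive closed $4$-parameter formulas for $\Phi$, compute $\Phi^2$ and compare it with $\Psi=J_2\circ\Phi\circ J_2$ (which is $\Phi^{-1}$ up to conjugation by Proposition~\ref{ph-1m}), and use the Symmetry Lemma to work with only the $(2,2)$-entry. Up to that point the plan is sound and matches the paper's architecture.

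There is, however, a genuine gap in the final step, where you assert that ``once the $(2,2)$-entries are matched up to conjugation by $x$, the Symmetry Lemma propagates the match to the other three entries automatically.'' It does not. The Symmetry Lemma says that if $a''=f(a,b,c,d)$ then $b''=f(b,a,d,c)$, $c''=f(c,d,a,b)$, $d''=f(d,c,b,a)$; so from $a''=\zeta(a,b,c,d)^{-1}a^{+}\zeta(a,b,c,d)$ you only obtain $b''=\zeta(b,a,d,c)^{-1}b^{+}\zeta(b,a,d,c)$, and similarly for $c''$, $d''$ --- each entry is conjugated by its \emph{own} permuted copy of the conjugator. To conclude $\Phi^2(A)=x^{-1}\Psi(A)x$ for a single $x$ (which is what Lemma~\ref{nonu} demands), you must additionally prove that the four permuted conjugators induce the same inner automorphism. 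In the paper this is the separate identity
$$
\zeta(a,b,c,d)=\zeta(d,c,b,a)=-\zeta(b,a,d,c)=-\zeta(c,d,a,b),\qquad
\zeta(a,b,c,d)=(d-c)^{-1}(c-1)-(b-a)^{-1}(a-1),
$$
which rests on the rewriting $\zeta(a,b,c,d)=(d-c)^{-1}(d-1)-(b-a)^{-1}(b-1)$ and is special to this particular $\zeta$; conjugation by $\zeta$ and $-\zeta$ coincide, so the four conjugations agree. Without verifying some such invariance of the conjugating element under the Klein-group permutations, your argument only shows that corresponding entries of $\Phi^2(A)$ and $\Psi(A)$ are individually conjugate, which does not yield $\Phi^2(A)\sim\Psi(A)$ as matrices. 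This missing verification is where the real content of the theorem sits, and it needs to be added to make the plan into a proof.
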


To see that Theorem~\ref{main2} implies Theorem~\ref{main1}, take $M\in{\rm dom}(J^3)$, where $J$ acts on $3\times 3$ matrices. By Theorem~\ref{DOMA}, $M\in{\cal S}$. Using (\ref{lamlam}), we can take $A\in \widehat{M}_3(R)$ such that $M\sim A$. Since $\cal S$ is obviously stable under the ${\cal D}_{LR}$-action, $A\in {\cal S}\cap \widehat{M}_3(R)=\widehat{\cal S}$. By Theorem~\ref{DOMA}, $\Phi(A)\in\widehat{S}$. By Theorem~\ref{main2}, applied to $\Phi(A)$, $\Phi^3(A)\sim A$. By (\ref{itephij}), $J^3(M)\sim J^3(A)\sim \Phi^3(A)\sim A\sim M$, which is exactly the desired conclusion of Theorem~\ref{main1}.

The advantages of this reformulation are pretty obvious. First, we have reduced the number of free parameters from $9$ to $4$ with 5 entries in the matrices from $\widehat{M}_3(R)$ being equal to $1$. Next, there is no need to compute the third power of a rather complicated map, which turned out to be nearly impossible (at least the result of this computation, which took about 20 pages is difficult to use). Finally, we do not have to deal with arbitrary pairs of invertible diagonal matrices: the equivalence relation $\sim$ on $\widehat M_3(R)$ is reduced to the conjugacy. We shall prove Theorem~\ref{main1} by means of proving Theorems~\ref{main2} and~\ref{DOMA}.

The proof consists of deriving first of all the closed expression for the transformation $\Phi$ and $\Phi^{-1}$. Then we derive explicit expressions for $\Phi^2$, and find a conjugating element between
$\Phi^2$ and $\Phi^{-1}$.

\section{Proof of the main results}

Throughout this section $n=3$. The main objective of this section is to prove Theorem~\ref{DOMA} and Theorem~\ref{main2}.

\subsection{Closed formulas for $\Phi$ and $\Phi^{-1}$ and domains}

\begin{lemma}\label{inve} Let
$$
A=\left(\begin{array}{ccc}
1&1&1\\
1&a&b\\
1&c&d
\end{array}\right)\in{\widehat M}_3(R)
\quad \text{and}\quad
B=\left(\begin{array}{cc}
a-1&b-1\\
c-1&d-1
\end{array}\right)\in M_2(R).
$$
Then $A$ is invertible if and only if $B$ is invertible. Moreover, if $B$ is invertible and
$$
B^{-1}=\left(\begin{array}{cc}
s&t\\
u&v
\end{array}\right),
$$
then the inverse of $A$ is given by the formula
\begin{equation}\label{ian}
A^{-1}=\left(\begin{array}{ccc}
1+s+t+u+v&-s-u&-t-v\\
-s-t&s&t\\
-u-v&u&v
\end{array}\right).
\end{equation}
Furthermore, if $a-1$, $b-1$, $c-1$ and $d-1$ are invertible, then $s$, $t$, $u$ and $v$ are given by the following formula $($thus providing an explicit expression of $A^{-1}$ in terms of $a$, $b$, $c$ and $d)$:
\begin{equation}\label{stuv1}
\begin{array}{ll}
s=((a-1)-(b-1)(d-1)^{-1}(c-1))^{-1},&\ \ \ t=((c-1)-(d-1)(b-1)^{-1}(a-1))^{-1},
\\
u=((b-1)-(a-1)(c-1)^{-1}(d-1))^{-1},&\ \ \ v=((d-1)-(c-1)(a-1)^{-1}(b-1))^{-1}.
\end{array}
\end{equation}
\end{lemma}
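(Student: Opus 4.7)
The plan is to reduce the $3\times 3$ inversion problem to the $2\times 2$ case via elementary row and column operations, which is legitimate over an arbitrary ring because the elementary matrices involved are always invertible. Concretely, consider the elementary matrices
$$
E_L=\left(\begin{array}{ccc}1&0&0\\-1&1&0\\-1&0&1\end{array}\right),\qquad
E_R=\left(\begin{array}{ccc}1&-1&-1\\0&1&0\\0&0&1\end{array}\right),
$$
both unimodular with explicit inverses. A direct multiplication shows
$$
E_L A E_R=\left(\begin{array}{ccc}1&0&0\\0&a-1&b-1\\0&c-1&d-1\end{array}\right)=\left(\begin{array}{cc}1&0\\0&B\end{array}\right),
$$
so $A$ is invertible in $M_3(R)$ if and only if this block-diagonal matrix is, which in turn is equivalent to invertibility of $B$ in $M_2(R)$. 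This establishes the first assertion.

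For the explicit inverse formula, I will invert the block-diagonal form and push the elementary matrices back. Writing
$$
A^{-1}=E_R\left(\begin{array}{cc}1&0\\0&B^{-1}\end{array}\right)E_L
=E_R\left(\begin{array}{ccc}1&0&0\\0&s&t\\0&u&v\end{array}\right)E_L,
$$
a quick multiplication (first with $E_L$ on the right, then with $E_R$ on the left) produces exactly the matrix displayed in \eqref{ian}; the only entry that requires any care is the top-left one, where the four contributions $s+t+u+v$ gather from both the row-subtraction and the column-subtraction steps.

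Finally, assuming that each of $a-1$, $b-1$, $c-1$, $d-1$ is invertible in $R$, the matrix $B$ lies in $M_2^\star(R)$, so Lemma~\ref{2t2inv} applies with entries $(a-1,b-1,c-1,d-1)$: its formula \eqref{inva} for the $2\times 2$ inverse gives \eqref{stuv1} verbatim (after noting, as Lemma~\ref{2t2inv} does, that the Schur-complement-like expressions $(a-1)-(b-1)(d-1)^{-1}(c-1)$ etc.\ are automatically invertible under the current $\star$-invertibility hypothesis on $B$). There is no serious obstacle: the lemma is essentially a bookkeeping reduction from $3\times 3$ to $2\times 2$, with the only subtlety being to perform all row and column operations on the correct side so that the noncommutativity of $R$ causes no trouble.
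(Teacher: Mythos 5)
Your argument is correct, and it reaches the conclusion by a genuinely different mechanism than the paper. The paper's proof takes the matrix $C$ on the right-hand side of \eqref{ian} as a \emph{candidate} inverse and verifies directly that $BD=DB=I\iff AC=CA=I$ (with $D$ the lower-right $2\times 2$ block of $C$), arguing the converse direction separately by reading off $BD=DB=I$ from $AA^{-1}=A^{-1}A=I$. You instead exhibit the explicit factorization $E_LAE_R=\left(\begin{smallmatrix}1&0\\0&B\end{smallmatrix}\right)$ with unimodular $E_L,E_R$ whose entries are central ($0,\pm1$), so that the equivalence of invertibility of $A$ and $B$ is immediate and the formula \eqref{ian} is \emph{derived} rather than checked: I confirmed that $E_R\left(\begin{smallmatrix}1&0\\0&B^{-1}\end{smallmatrix}\right)E_L$ reproduces \eqref{ian} exactly. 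Your route buys transparency (it explains where the top-left entry $1+s+t+u+v$ comes from and handles both directions of the ``if and only if'' in one stroke); the paper's route avoids introducing the elementary matrices at the cost of an unilluminating verification. The final step --- applying Lemma~\ref{2t2inv} to $B$ to get \eqref{stuv1} --- is identical in both proofs. One small imprecision: the invertibility of the Schur-complement expressions in \eqref{stuv1} follows from Lemma~\ref{2t2inv} because $B$ lies in $M_2^*(R)\cap M_2^\star(R)$, i.e.\ it needs the already-assumed invertibility of $B$ together with the Hadamard invertibility, not the $\star$-invertibility alone as your parenthetical suggests; in context $B$ is indeed assumed invertible there, so nothing is actually missing.
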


\begin{proof} Let $s,t,u,v\in R$, $C\in M_3(R)$ be the matrix in the right-hand side of (\ref{ian}) and $D\in M_2(R)$ be the lower-left $2\times 2$ corner of $C$. It is straightforward to verify that $BD=DB=I\iff AC=CA=I$. In particular, the invertibility of $B$ implies the invertibility of $A$ and the equality $B^{-1}=D$ implies $A^{-1}=C$. On the other hand, if $A$ is invertible and $D$ is the left-lower $2\times 2$ corner of $A^{-1}$, it is easy to see that the equation $AA^{-1}=A^{-1}A=I$ implies that $BD=DB=I$ and therefore $B$ is invertible with $B^{-1}=D$. Thus $A$ is invertible if and only if $B$ is invertible and (\ref{ian}) holds provided $B^{-1}=D$. Finally, if $a-1$, $b-1$, $c-1$ and $d-1$ are invertible, Lemma~\ref{2t2inv} applied to the matrix $B$ gives the explicit formulas (\ref{stuv1}) for the entries $s,t,u,v$ of $B^{-1}$.
\end{proof}

We will need few following specific noncommutative identities.

\begin{lemma}\label{iden2} Let $a$, $b$, $c$, $d$ be invertible elements of $R$. Then
\begin{itemize}
\item[\rm (a)]$a-b$ is invertible if and only if $a^{-1}-b^{-1}$ is invertible$;$
\item[\rm (b)]if $a-b$ is invertible, then $a(a-b)^{-1}b=b(a-b)^{-1}a=(b^{-1}-a^{-1})^{-1};$
\item[\rm (c)]if $d-c$ and $b-a$ are invertible, then $db^{-1}-ca^{-1}$ is invertible if and only if
$(d-c)^{-1}c-(b-a)^{-1}a$ is invertible$;$
\item[\rm (d)]if $d-c$, $b-a$ $a-1$, $b-1$, $c-1$ and $d-1$ are invertible, then $(d-1)(b-1)^{-1}-(c-1)(a-1)^{-1}$ is invertible if and only if $(d-c)^{-1}(c-1)-(b-a)^{-1}(a-1)$ is invertible$;$
\item[\rm (e)]if $d-c$, $b-a$ $a-1$, $b-1$, $c-1$ and $d-1$ are invertible, then $(d^{-1}-1)(c^{-1}-1)^{-1}-(b^{-1}-1)(a^{-1}-1)^{-1}$ is invertible if and only if $(c-1)(d-c)^{-1}d-(a-1)(b-a)^{-1}b$ is invertible.
\end{itemize}
\end{lemma}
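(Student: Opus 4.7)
My plan is to prove the five parts in sequence, handling (a)--(d) by short direct manipulations and reserving care for (e).

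For (a) I would use the identity $a^{-1}-b^{-1}=a^{-1}(b-a)b^{-1}$, which makes $a^{-1}-b^{-1}$ a product of three invertibles whenever $a,b,a-b$ are invertible; the converse falls out of the same identity applied with $a^{-1},b^{-1}$ in place of $a,b$. For (b) I would observe that both $a^{-1}(a-b)b^{-1}$ and $b^{-1}(a-b)a^{-1}$ equal $b^{-1}-a^{-1}$ by direct expansion, and then take inverses. For (c) the key step is the factorization
\begin{equation*}
db^{-1}-ca^{-1}=(d-c)\bigl[I-(d-c)^{-1}ca^{-1}(b-a)\bigr]b^{-1},
\end{equation*}
which reduces invertibility of $db^{-1}-ca^{-1}$ to that of $I-(d-c)^{-1}ca^{-1}(b-a)$; right-multiplying the bracketed factor by the invertible $(b-a)^{-1}a$ turns it into $(b-a)^{-1}a-(d-c)^{-1}c$, giving the stated equivalence. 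Part (d) is then obtained from (c) upon substituting $a-1,b-1,c-1,d-1$ for $a,b,c,d$, since $(d-1)-(c-1)=d-c$ and $(b-1)-(a-1)=b-a$.

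For (e), let $P$ denote the first expression and $Q$ the second. The plan is to show that both are invertible exactly when the common intermediate
\begin{equation*}
E:=(1-d^{-1}c)(c-1)^{-1}-(1-b^{-1}a)(a-1)^{-1}
\end{equation*}
is invertible. On the $P$ side I would first rewrite $(d^{-1}-1)(c^{-1}-1)^{-1}=d^{-1}(d-1)(c-1)^{-1}c$, and then use the fact that $c$ commutes with $(c-1)^{-1}$ (both lie in the commutative subalgebra generated by $c$) together with $c=(c-1)+1$ to derive
\begin{equation*}
d^{-1}(d-1)(c-1)^{-1}c=1+(1-d^{-1}c)(c-1)^{-1},
\end{equation*}
and analogously for the $a,b$-term; the two constants cancel on subtraction, giving $P=E$ outright. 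On the $Q$ side each summand of $Q$ is a product of invertibles, so by part (a) $Q$ is invertible iff the difference of their inverses is, namely $d^{-1}(d-c)(c-1)^{-1}-b^{-1}(b-a)(a-1)^{-1}$; using $d^{-1}(d-c)=1-d^{-1}c$ this is precisely $E$. Combining the two reductions yields $P$ invertible $\iff E$ invertible $\iff Q$ invertible.

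The main obstacle is (e): structurally it looks like it should follow from (d) applied to $a^{-1},b^{-1},c^{-1},d^{-1}$, but the pairing of the variables in $P$ and $Q$ does not line up with the hypotheses $d-c$, $b-a$ (one would instead need $d-b$ and $c-a$ invertible), so no direct substitution into (c) or (d) works. The step that unlocks (e) is the commutation of $c$ with $(c-1)^{-1}$, which lets one pull the trailing $c$ across $(c-1)^{-1}$ and absorb it into a $1-d^{-1}c$ factor; only after this manipulation does $P$ match the expression produced from $Q$ by applying part (a) to its two invertible summands.
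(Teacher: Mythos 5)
Your proof is correct and follows essentially the same route as the paper's: (a) and (b) by direct identities, (c) by an elementary manipulation reducing invertibility to that of a difference of products of invertibles, (d) by substituting $a-1,b-1,c-1,d-1$ into (c), and (e) by subtracting off constants and applying part (a) once to a difference of invertible products. The only cosmetic differences are that in (c) you use an explicit three-factor factorization where the paper multiplies by $c^{-1}$ and $b$ on the two sides before invoking (a), and in (e) you apply (a) on the $Q$-side and meet the $P$-side at the common intermediate $E$, whereas the paper applies (a) and then (b) on the $P$-side and lands on $Q$ directly.
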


\begin{proof} (b) is trivial and trivially implies (a). Assume that $d-c$ and $b-a$ are invertible. Multiplying $db^{-1}-ca^{-1}$ by $b$ on the right and by $c^{-1}$ on the left, we see that
$$
db^{-1}-ca^{-1}\in R^*\iff c^{-1}d-a^{-1}b=(c^{-1}d-1)-(a^{-1}b-1)\in R^*.
$$
Using (a), we get
$$
db^{-1}-ca^{-1}\in R^*\iff(c^{-1}d-1)^{-1}-(a^{-1}b-1)^{-1}=(d-c)^{-1}c-(b-a)^{-1}a\in R^*,
$$
which proves (c). Clearly, (d) is (c) applied to $a-1$, $b-1$, $c-1$ and $d-1$ instead of $a$, $b$, $c$ and $d$. Next,
$$
(d^{-1}-1)(c^{-1}-1)^{-1}-1=(d^{-1}-1)(c^{-1}-1)^{-1}-(c^{-1}-1)(c^{-1}-1)^{-1}=(d^{-1}-c^{-1})(c^{-1}-1)^{-1}.
$$
Hence
$$
u=(d^{-1}-1)(c^{-1}-1)^{-1}-(b^{-1}-1)(a^{-1}-1)^{-1}=(d^{-1}-c^{-1})(c^{-1}-1)^{-1}-(b^{-1}-a^{-1})(a^{-1}-1)^{-1}.
$$
Using (a), we see that the invertibility of $u$ is equivalent to the invertibility of
$$
(c^{-1}-1)(d^{-1}-c^{-1})^{-1}-(a^{-1}-1)(b^{-1}-a^{-1})^{-1}=-(c-1)c^{-1}(d^{-1}-c^{-1})^{-1}d^{-1}d+
(a-1)a^{-1}(b^{-1}-a^{-1})^{-1}b^{-1}b.
$$
By (b), $a^{-1}(b^{-1}-a^{-1})^{-1}b^{-1}=(a-b)^{-1}$ and $c^{-1}(d^{-1}-c^{-1})^{-1}d^{-1}=(c-d)^{-1}$. Plugging this into the above equality, we see that $u$  is invertible if and only if $(c-1)(d-c)^{-1}d-(a-1)(b-a)^{-1}b$ is invertible, which completes the proof of (e).
\end{proof}

\begin{lemma}\label{subma} Let
$$
A= \left(\begin{array}{ccc}
1&1&1\\
1&a&b\\
1&c&d
\end{array}\right)\in \widehat{M}_3(R).
$$
Then all $1\times 1$ and $2\times 2$ submatrices of $A$ are invertible if and only if $a$, $b$, $c$, $d$, $a-1$, $b-1$, $c-1$, $d-1$, $d-c$, $d-b$, $c-a$, $b-a$ and $db^{-1}-ca^{-1}$ are invertible in $R$.

Furthermore, if all $1\times 1$ and $2\times 2$ submatrices of $A$ are invertible, then
\begin{align}
A\in M^*_3(R)&\iff (d-c)^{-1}(c-1)-(b-a)^{-1}(a-1)\in R^*;\label{AINV}
\\
J_2(A)\in M^*_3(R)&\iff (c-1)(d-c)^{-1}d-(a-1)(b-a)^{-1}b\in R^*. \label{J2AINV}
\end{align}
In particular,
\begin{equation}\label{SSS}
\widehat{{\cal S}}=\left\{A=\left(\begin{array}{ccc}
1&1&1\\
1&a&b\\
1&c&d
\end{array}\right)\in{\widehat M}_3(R):
\begin{array}{l}
\text{the $15$ elements $a$, $b$, $c$, $d$, $a-1$, $b-1$, $c-1$, $d-1$,}\\
\text{$d-c$, $d-b$, $c-a$, $b-a$, $db^{-1}-ca^{-1}$,}\\
\text{$(d-c)^{-1}(c-1)-(b-a)^{-1}(a-1)$ and}\\
\text{$(c-1)(d-c)^{-1}d-(a-1)(b-a)^{-1}a$ are invertible in $R$.}
\end{array}\right\},
\end{equation}
where $\widehat{{\cal S}}={\cal S}\cap\widehat{M}_3(R)$ as defined in $(\ref{SS})$.
\end{lemma}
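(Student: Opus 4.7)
My plan is to split the statement into three pieces: the characterization of which thirteen elements must be invertible for all $1\times 1$ and $2\times 2$ submatrices of $A$ to be invertible, the two separate equivalences (\ref{AINV}) and (\ref{J2AINV}), and finally the assembly into the expression (\ref{SSS}) for $\widehat{\cal S}$.

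For the first piece I would simply enumerate. The $1\times 1$ submatrices are the nine entries of $A$, six of which equal $1$, so $1\times 1$-invertibility amounts to $a,b,c,d\in R^*$. For the nine $2\times 2$ submatrices (two rows and two columns chosen from $\{1,2,3\}$) I would apply Lemma~\ref{2t2inv} to each. The six submatrices that meet the first row or first column each have at least one entry equal to $1$, so the criterion $db^{-1}-ca^{-1}\in R^*$ of Lemma~\ref{2t2inv}, after multiplication by invertible entries of $A$, reduces to invertibility of one of $a-1,b-1,c-1,d-1$ (when the submatrix touches the top-left corner) or of one of $b-a,d-c,c-a,d-b$ (when it spans the second and third rows or the second and third columns). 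The remaining submatrix is the lower-right $2\times 2$ block with entries $a,b,c,d$, for which Lemma~\ref{2t2inv} yields exactly $db^{-1}-ca^{-1}\in R^*$. Together these give the thirteen elements listed.

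For (\ref{AINV}), assuming the thirteen invertibilities, I would apply Lemma~\ref{inve} to $A$: this reduces $A\in M_3^*(R)$ to invertibility of the $2\times 2$ matrix $B$ with rows $(a-1,b-1)$ and $(c-1,d-1)$. Since $B$ has invertible entries, Lemma~\ref{2t2inv} gives $B\in M_2^*(R)\iff(d-1)(b-1)^{-1}-(c-1)(a-1)^{-1}\in R^*$, and Lemma~\ref{iden2}(d) rewrites this as $(d-c)^{-1}(c-1)-(b-a)^{-1}(a-1)\in R^*$. For (\ref{J2AINV}) I would run the same argument on $J_2(A)$; the latter lies in $\widehat{M}_3(R)$, and the transpose built into $J_2$ means its lower-right $2\times 2$ block has entries $a^{-1}$ (top-left), $c^{-1}$ (top-right), $b^{-1}$ (bottom-left), $d^{-1}$ (bottom-right). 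Lemma~\ref{iden2}(a), together with elementary manipulations of the form $d^{-1}-c^{-1}=-d^{-1}(d-c)c^{-1}$, shows that the thirteen conditions on $A$ imply the analogous thirteen conditions on $J_2(A)$, so Lemma~\ref{inve} and Lemma~\ref{2t2inv} apply and give $J_2(A)\in M_3^*(R)\iff(d^{-1}-1)(c^{-1}-1)^{-1}-(b^{-1}-1)(a^{-1}-1)^{-1}\in R^*$. Lemma~\ref{iden2}(e) then converts this to $(c-1)(d-c)^{-1}d-(a-1)(b-a)^{-1}b\in R^*$.

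The formula (\ref{SSS}) follows by combining: an element $A\in\widehat{M}_3(R)$ belongs to $\widehat{\cal S}={\cal S}\cap\widehat{M}_3(R)$ iff all square submatrices of $A$ are invertible and $J_2(A)$ is invertible, and by the above this amounts precisely to invertibility of the thirteen elements from the first step together with the fourteenth (from (\ref{AINV})) and the fifteenth (from (\ref{J2AINV})). I expect no serious algebraic obstacle here: the three named lemmas do all the heavy lifting. The only points that require care are the bookkeeping for $J_2(A)$, where the transpose interchanges $b$ and $c$, and the selection of the correct part of Lemma~\ref{iden2} in each equivalence --- using part (d) for $A$ because the relevant $2\times 2$ minor has entries of the form $x-1$, and part (e) for $J_2(A)$ because its minor has entries of the form $x^{-1}-1$.
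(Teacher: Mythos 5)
Your proposal is correct and follows essentially the same route as the paper's own proof: enumerate the $1\times 1$ and nine $2\times 2$ submatrices via Lemma~\ref{2t2inv}, reduce $A\in M_3^*(R)$ and $J_2(A)\in M_3^*(R)$ to the invertibility of the corresponding $2\times 2$ corner blocks via Lemma~\ref{inve}, and convert the resulting criteria using parts (d) and (e) of Lemma~\ref{iden2}. Your extra remark justifying that the entries $a^{-1}-1,\dots,d^{-1}-1$ of the block for $J_2(A)$ are invertible (via Lemma~\ref{iden2}(a)) is a detail the paper leaves implicit, but otherwise the arguments coincide.
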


\begin{proof} Obviously, the $1\times 1$ submatrices of $A$ are invertible if and only if $a$, $b$, $c$ and $d$ are invertible, in which case $A\in{\rm dom}(J_2)$. Using Lemma~\ref{2t2inv} and Lemma~\ref{iden2}, we see that the invertibility of the $2\times 2$ submatrices
\begin{align*}
&\left(\begin{array}{cc}
1&1\\
1&a
\end{array}\right),\ \ \ \left(\begin{array}{cc}
1&1\\
1&b
\end{array}\right),\ \
\left(\begin{array}{cc}
1&1\\
1&c
\end{array}\right)\ \ \text{and}\ \
\left(\begin{array}{cc}
1&1\\
1&d
\end{array}\right),\ \
\left(\begin{array}{cc}
1&a\\
1&c
\end{array}\right),
\\
&\left(\begin{array}{cc}
1&b\\
1&d
\end{array}\right),\ \
\left(\begin{array}{cc}
1&1\\
a&b
\end{array}\right),\ \
\left(\begin{array}{cc}
1&1\\
c&d
\end{array}\right)\ \ \text{and}\ \ \left(\begin{array}{cc}
a&b\\
c&d
\end{array}\right)
\end{align*}
of $A$ is equivalent to the invertibility of $a-1$, $b-1$, $c-1$, $d-1$, $c-a$, $d-b$, $b-a$, $d-c$ and $db^{-1}-ca^{-1}$
respectively. Summarizing the above observations, we get that all $1\times 1$ and $2\times 2$ submatrices of  $A$ are invertible precisely when the 13 elements $a$, $b$, $c$, $d$, $a-1$, $b-1$, $c-1$, $d-1$, $d-c$, $d-b$, $c-a$, $b-a$ and $db^{-1}=ca^{-1}$ are invertible in $R$.

Now we shall assume that these 13 elements are invertible. By Lemma~\ref{inve}, $A$ is invertible if and only if
$$
B=\left(\begin{array}{cc}
a-1&b-1\\
c-1&d-1\end{array}\right)
$$
is invertible. Since $B\in M_2^\star(R)$, by Lemma~\ref{2t2inv}, this happens if and only if $(d-1)(b-1)^{-1}-(c-1)(a-1)^{-1}$ is invertible. By Part~(d) of Lemma~\ref{iden2}, the last invertibility is equivalent to the invertibility of $(d-c)^{-1}(c-1)-(b-a)^{-1}(a-1)$. This completes the proof of (\ref{AINV}).

Next, by Lemma~\ref{inve}, $J_2(A)$ is invertible if and only if
$$
C=\left(\begin{array}{cc}
a^{-1}-1&c^{-1}-1\\
b^{-1}-1&d^{-1}-1\end{array}\right)
$$
is invertible. Since $C\in M_2^\star(R)$, Lemma~\ref{2t2inv} implies that the latter happens if and only if
$(d^{-1}-1)(c^{-1}-1)^{-1}-(b^{-1}-1)(a^{-1}-1)^{-1}$ is invertible. By Part~(e) of Lemma~\ref{iden2}, this invertibility is equivalent to the invertibility of $(c-1)(d-c)^{-1}d-(a-1)(b-a)^{-1}b$, which completes the proof of (\ref{J2AINV}). Finally, the above observations imply (\ref{SSS}).
\end{proof}

\begin{lemma}\label{meaow} For $M\in M_3(R)$,
\begin{align}
&M\in {\cal S}\ \Longrightarrow\ J_2(M)\in{\cal S}; \label{J2S}
\\
&M\in {\cal S}\ \Longrightarrow\ \text{all square submatrices of $M^{-1}$ are invertible}. \label{J1S}
\end{align}
Furthermore,
\begin{equation}\label{domdom}
{\rm dom}(J)\cap {\rm dom}(J^{-1})={\cal S}\ \ \text{and}\ \ {\rm dom}(\Phi)\cap {\rm dom}(\Phi^{-1})=\widehat{\cal S}.
\end{equation}
\end{lemma}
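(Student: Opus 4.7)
My plan is to establish the three items in order: $(\ref{J2S})$, then $(\ref{J1S})$, and then use both to obtain $(\ref{domdom})$. Two ingredients carry the whole argument. The first is the equivalence $A\in M_2^\star(R)\Longrightarrow(A\in M_2^*(R)\iff J_2(A)\in M_2^*(R))$ from $(\ref{first2})$, which handles everything $2\times 2$. The second is the noncommutative block $LU$ factorisation
$$
\begin{pmatrix}A&B\\C&D\end{pmatrix}=\begin{pmatrix}I&BD^{-1}\\0&I\end{pmatrix}\begin{pmatrix}A-BD^{-1}C&0\\0&D\end{pmatrix}\begin{pmatrix}I&0\\D^{-1}C&I\end{pmatrix}
$$
valid when $D$ is invertible, together with its twin when $A$ is invertible. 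These factorisations show that, when $D$ is invertible, $M$ is invertible iff the Schur complement $A-BD^{-1}C$ is invertible, and that the top-left block of $M^{-1}$ then equals $(A-BD^{-1}C)^{-1}$. I also use the ${\cal P}_{LR}$-invariance of $\cal S$ to move any chosen row/column pair of $M$ into the top-left corner, so every block in the arguments below may be assumed to be a top-left block.

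For $(\ref{J2S})$ I check the defining conditions of $\cal S$ for $J_2(M)$ one by one: the entries of $J_2(M)$ are inverses of entries of $M\in M_3^\star(R)$, hence invertible; each $2\times 2$ submatrix of $J_2(M)$ is $J_2(N)$ for some $2\times 2$ submatrix $N$ of $M$ with $N\in M_2^*(R)\cap M_2^\star(R)$, so $(\ref{first2})$ yields its invertibility; $J_2(M)$ itself is invertible by the $\cal S$-hypothesis; and $J_2(J_2(M))=M$ is invertible.

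For $(\ref{J1S})$, the $3\times 3$ case is tautological. A $2\times 2$ submatrix of $M^{-1}$, once moved to the top-left by a permutation, equals $(A-BD^{-1}C)^{-1}$ with $D=M_{33}$ invertible, hence is the inverse of an invertible matrix. An entry of $M^{-1}$, once moved to position $(3,3)$, equals $(D-CA^{-1}B)^{-1}$ where $A$ is the top-left $2\times 2$ of $M$ (invertible by the $\cal S$-hypothesis), again the inverse of something invertible.

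For $(\ref{domdom})$, the inclusion ${\cal S}\subset{\rm dom}(J)\cap{\rm dom}(J^{-1})$ is immediate from the definitions once $(\ref{J1S})$ supplies $M^{-1}\in M_3^\star(R)$. The reverse inclusion reduces to invertibility of the $2\times 2$ submatrices of $M$, obtained by the same Schur complement argument applied to $M^{-1}$: writing $M^{-1}=\begin{pmatrix}P&Q\\R&S\end{pmatrix}$ with $S=(M^{-1})_{33}$ invertible by hypothesis, invertibility of $M^{-1}$ forces $P-QS^{-1}R$ to be invertible, and then the top-left $2\times 2$ of $M=(M^{-1})^{-1}$ equals $(P-QS^{-1}R)^{-1}$. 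For the $\Phi$-equality, unfolding the definitions reduces ${\rm dom}(\Phi)\cap{\rm dom}(\Phi^{-1})$ to $\widehat{\cal S}\cap\{M:J^{-1}(M)\in M_3^\star(R)\}$; for $M\in\widehat{\cal S}$, applying $(\ref{J2S})$ and then $(\ref{J1S})$ to $J_2(M)$ shows $J^{-1}(M)=(J_2(M))^{-1}\in M_3^\star(R)$, so the extra condition is automatic. The only real obstacle I foresee is clerical: ensuring at each step that the row/column permutation applied to $M$ acts correctly on $M^{-1}$ so that the Schur complement identity applies to the intended block.
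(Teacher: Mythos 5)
Your proof is correct, and for the two substantive items --- (\ref{J1S}) and the inclusion ${\rm dom}(J)\cap{\rm dom}(J^{-1})\subseteq{\cal S}$ --- it takes a genuinely different route from the paper. The paper first reduces, via both the ${\cal P}_{LR}$-action \emph{and} the ${\cal D}_{LR}$-action (Lemma~\ref{4pA}), to a matrix in $\widehat M_3(R)$, and then invokes Lemma~\ref{inve} (which is in effect the Schur complement with respect to the $(1,1)$ entry, normalised to equal $1$) together with the explicit $2\times 2$ inversion formulas (\ref{stuv1}) from Lemma~\ref{2t2inv}: the lower-right $2\times2$ corner of $M^{-1}$ is identified as $B^{-1}$ for the invertible matrix $B$ with entries $a-1,\dots,d-1$, and for the converse direction the Hadamard invertibility of $B^{-1}$ is pushed back through Lemma~\ref{2t2inv} to get $a-1,\dots,d-1\in R^*$. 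You instead apply the block $LU$ factorisation directly to $M$ (respectively to $M^{-1}$) with the pivot at the $(3,3)$ entry or at the top-left $2\times2$ block, which needs only the ${\cal P}_{LR}$-reduction and no normalisation at all; the one point you rightly flag --- that permuting $M$ by $(P_1,P_2)$ permutes $M^{-1}$ by $(P_2,P_1)$, so the pivot entry of $M^{-1}$ is still covered by the Hadamard-invertibility hypothesis --- is the only bookkeeping required, and it goes through. Your argument is shorter, works verbatim for $n\times n$ matrices, and does not depend on Lemma~\ref{inve}; the paper's route has the advantage of reusing machinery (Lemmas~\ref{2t2inv}, \ref{4pA}, \ref{inve} and the entry formulas) that it needs anyway for the explicit computations in Lemmas~\ref{subma} and beyond. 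Your treatment of (\ref{J2S}) via (\ref{first2}) and of the $\Phi$-equality in (\ref{domdom}) by unfolding the definitions and feeding $J_2(M)$ back into (\ref{J2S}) and (\ref{J1S}) coincides with the paper's.
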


\begin{proof} Let $M\in {\cal S}$. Then all $1\times 1$ submatrices of $J_2(M)$ as well as $J_2(M)$ and $M=J_2(J_2(M))$ are invertible. In order to show that $J_2(M)\in{\cal S}$, it suffices to verify that all $2\times 2$ submatrices of $J_2(M)$ are invertible. Since the latter are the $J_2$-images of $2\times 2$ submatrices of $M$, which are invertible, Lemma~\ref{2t2inv} implies that $2\times 2$ submatrices of $J_2(M)$ are invertible as well. Thus $J_2(M)\in{\cal S}$, which proves (\ref{J2S}).

Next, we  verify (\ref{J1S}). The invertibility of $M^{-1}$ itself is not an issue. Thus it suffices to verify the invertibility of the $1\times 1$ and $2\times 2$ submatrices of $M^{-1}$. Recall that the action of the group ${\cal P}_{LR}$ of pairs of permutation matrices leaves $\cal S$, $M_3^*(R)$ and $M_3^\star(R)$ invariant and both $J_1$ and $J_2$ act on the orbits of this action. Furthermore, ${\cal P}_{LR}$-action permutes transitively the $2\times 2$ submatrices as well as the $1\times 1$ submatrices of a given matrix. This observation implies that it is enough to check the invertibility of specific $1\times 1$ and $2\times 2$ submatrices of $M^{-1}$. We choose the lower right corner $1\times 1$ and $2\times 2$ submatrices of $M^{-1}$. Next, it is easy to see that ${\cal D}_{LR}$-action preserves $\cal S$ and does not disturb the invertibility of any given square submatrix. Thus nothing changes if we replace $M$ by any matrix in the same ${\cal D}_{LR}$-orbit. By Lemma~\ref{4pA}, we can find a matrix from ${\widehat M}_3(R)$ in the ${\cal D}_{LR}$-orbit of $M$. Thus, without loss of generality, we may assume that
$$
M=\left(\begin{array}{ccc}
1&1&1\\
1&a&b\\
1&c&d
\end{array}\right)\in \widehat{M}_3(R).
$$
Since $M\in{\cal S}$, we have $M\in\widehat{\cal S}$ and Lemma~\ref{subma} implies that the 15 elements from (\ref{SSS}) are invertible. By Lemma~\ref{inve},
$$
M^{-1}=\left(\begin{array}{ccc}
1+s+t+u+v&-s-u&-t-v\\
-s-t&s&t\\
-u-v&u&v
\end{array}\right),
$$
where $s$, $t$, $u$ and $v$ are given by (\ref{stuv1}) and satisfy
$$
\left(\begin{array}{cc}
a-1&b-1\\
c-1&d-1
\end{array}\right) \left(\begin{array}{cc}
s&t\\
u&v
\end{array}\right)=\left(\begin{array}{cc}
s&t\\
u&v
\end{array}\right)\left(\begin{array}{cc}
a-1&b-1\\
c-1&d-1
\end{array}\right)=\left(\begin{array}{cc}
1&0\\
0&1
\end{array}\right).
$$
This immediately implies that the lower right corner $2\times 2$ submatrix of $M^{-1}$ is invertible. The invertibility of the lower right corner $1\times 1$ submatrix of $M^{-1}$ is the invertibility of $v$, which follows from (\ref{stuv1}).
Thus the square submatrices of $M^{-1}$ are invertible. This completes the proof of (\ref{J1S}).

Next, we will check that $M\in {\rm dom}(J)\cap {\rm dom}(J^{-1})$ (we still assume that $M\in{\cal S}$).
Since $1\times 1$-submatices of $M$ are invertible, $M\in {\rm dom}(J_2)$. Since $J_2(M)$ is invertible, $M\in{\rm dom}(J^{-1})$. Since $M$ is invertible, $M\in{\rm dom}(J_1)$. By the already proved (\ref{J1S}), $J_1(M)$ is Hadamard invertible. Hence $M\in{\rm dom}(J)$. This proves the inclusion ${\rm dom}(J)\cap {\rm dom}(J^{-1})\supseteq {\cal S}$.

Now assume that $A\in\widehat{\cal S}$. Then $A\in \widehat{M}_3(R)\cap M^\star_3(R)\cap M^*_3(R)$. By the already verified (\ref{J1S}), $J_1(A)\in M^\star_3(R)$. Thus $A\in {\rm dom}(\Phi)$. By the already verified (\ref{J2S}), $J_2(A)\in \widehat{\cal S}$. Hence $J_2(A)$ is invertible and $J_2(A)^{-1}$ is Hadamard invertible. It follows that $A\in{\rm dom}(\Phi^{-1})$. This proves the inclusion ${\rm dom}(\Phi)\cap {\rm dom}(\Phi^{-1})\supseteq \widehat{\cal S}$.

Next, assume that $M\in {\rm dom}(J)\cap {\rm dom}(J^{-1})$.
This means that $M$ is both invertible and Hadamard invertible, that $M^{-1}$ is Hadamard invertible and $J_2(M)$ is invertible. In particular, $M$ and $J_2(M)$ as well as all $1\times 1$ submatrices of $M$ are invertible. In order to complete the proof of the inclusion $M\in{\cal S}$, it remains to show that all $2\times 2$ submatrices of $M$ are invertible. As in the first part of the proof, using the actions of ${\cal P}_{LR}$ and ${\cal D}_{LR}$, we can reduce the task to the verification of the invertibility of the right upper corner $2\times 2$ submatrix of $M$ in the case $M\in {\widehat M}_3(R)$. In this case, by Lemma~\ref{inve},
\begin{align*}
&M=\left(\begin{array}{ccc}
1&1&1\\
1&a&b\\
1&c&d
\end{array}\right)\ \ \ \text{and}\ \ \ M^{-1}=\left(\begin{array}{ccc}
1+s+t+u+v&-s-u&-t-v\\
-s-t&s&t\\
-u-v&u&v
\end{array}\right),
\\
&\qquad\text{where}\ \ \ B=\left(\begin{array}{cc}
a-1&b-1\\
c-1&d-1
\end{array}\right)\ \ \text{is invertible and}\ \ B^{-1}=\left(\begin{array}{cc}
s&t\\
u&v
\end{array}\right).
\end{align*}
Since $s$, $t$, $u$ and $v$ are entries of $M^{-1}$, which happens to be Hadamard invertible, $B^{-1}$ is Hadamard invertible. By Lemma~\ref{2t2inv}, $B$ is Hadamard invertible. That is, $a-1$, $b-1$, $c-1$ and $d-1$ are invertible.
By Lemma~\ref{2t2inv}, the invertibility of $a-1$ yields the invertibility of
the right upper corner $2\times 2$ submatrix of $M$. Thus $M\in{\cal S}$. This completes the proof of the inclusion ${\rm dom}(J)\cap {\rm dom}(J^{-1})\subseteq {\cal S}$.

Since ${\rm dom}(\Phi)\subseteq {\rm dom}(J)\cap \widehat{M}_3(R)$, ${\rm dom}(\Phi^{-1})\subseteq {\rm dom}(J^{-1})\cap \widehat{M}_3(R)$ and ${\rm dom}(J)\cap {\rm dom}(J^{-1})\subseteq {\cal S}$, we have ${\rm dom}(\Phi)\cap {\rm dom}(\Phi^{-1})\subseteq {\cal S}\cap \widehat{M}_3(R)=\widehat{\cal S}$. This completes the proof of (\ref{domdom}).
\end{proof}

By Lemma~\ref{meaow}, $\Phi(A)$ and $\Phi^{-1}(A)$ are well-defined if $A\in\widehat{\cal S}$. Together with Corollary~\ref{kley1}, the following lemma provides a explicit formulas for $\Phi(A)$ and $\Phi^{-1}(A)$ provided $A\in\widehat{\cal S}$.

\begin{lemma}\label{nneeww} (Closed formulas for  $\Phi(A)$ and $\Phi^{-1}(A)$)
Let
\begin{equation}\label{notat}
A= \left(\begin{array}{ccc}
1&1&1\\
1&a&b\\
1&c&d
\end{array}\right)\in \widehat{\cal S},\ \ \
\Phi(A)=\left(\begin{array}{ccc}
1&1&1\\
1&a'&b'\\
1&c'&d'
\end{array}\right),\ \ \text{and}\ \ \Phi^{-1}(A)=\left(\begin{array}{ccc}
1&1&1\\
1&a^\circ&b^\circ\\
1&c^\circ&d^\circ
\end{array}\right).
\end{equation}
Then
\begin{equation}\label{hfex}
\begin{array}{l}
a'=(d-1)^{-1}(d-c)a^{-1}(db^{-1}-ca^{-1})^{-1}(db^{-1}-1);
\\
a^\circ=(d-1)(d-c)^{-1}(db^{-1}-ca^{-1})(db^{-1}-1)^{-1}.
\end{array}
\end{equation}
\end{lemma}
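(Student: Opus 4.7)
By the Symmetry Lemma (Corollary~\ref{kley1}), it is enough to derive only the $(2,2)$-entries $a'$ and $a^\circ$; the other entries $b',c',d',b^\circ,c^\circ,d^\circ$ then follow automatically from the permutations of $(a,b,c,d)$ given there. So I would treat $a'$ and $a^\circ$ separately.

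\textbf{Computation of $a'$.} Starting from $\Phi(A)=J_2(\lal(A^{-1}))$, I would use Lemma~\ref{inve} to write $A^{-1}$ as in (\ref{ian}), with entries expressed in the four parameters $s,t,u,v$ of $B^{-1}$, where $B$ is the associated $2\times 2$ matrix. Applying $\lal$ as defined in (\ref{lal}) and then taking the $(2,2)$-entry of $J_2$ (which is simply the inverse of the $(2,2)$-entry) gives
\begin{equation*}
a'=(s+u)\,s^{-1}\,(s+t)\,(1+s+t+u+v)^{-1}.
\end{equation*}
Rather than substitute the explicit formulas (\ref{stuv1}), it is cleaner to read off compact identities from the matrix equalities $BB^{-1}=B^{-1}B=I$: the $(2,1)$-entry of $BB^{-1}=I$ gives $u=-(d-1)^{-1}(c-1)s$ and the $(1,2)$-entry of $B^{-1}B=I$ gives $t=-s(b-1)(d-1)^{-1}$, yielding $s+u=(d-1)^{-1}(d-c)s$, $s+t=s(d-b)(d-1)^{-1}$, and $1+s+t+u+v=(d-1)^{-1}\bigl[d+(d-c)s(d-b)(d-1)^{-1}\bigr]$. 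Feeding these in, the middle factor $(s+u)s^{-1}(s+t)$ telescopes and the whole expression collapses to
\begin{equation*}
a'=(d-1)^{-1}(d-c)\,s(d-b)\,[d(d-1)+(d-c)\,s(d-b)]^{-1}(d-1).
\end{equation*}

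\textbf{Main obstacle and computation of $a^\circ$.} The genuinely nontrivial step is to identify this intermediate form with the claimed $(d-1)^{-1}(d-c)a^{-1}(db^{-1}-ca^{-1})^{-1}(d-b)b^{-1}$. This is where Lemma~\ref{iden2} does the real work: part~(c) of that lemma is precisely the translation between the invertible element $(d-c)^{-1}(c-1)-(b-a)^{-1}(a-1)$ — which is essentially what $s^{-1}$ becomes after the rescaling $s^{-1}=(a-1)-(b-1)(d-1)^{-1}(c-1)$ is pushed through — and $db^{-1}-ca^{-1}$; the identity of part~(b), $x(x-y)^{-1}y=(y^{-1}-x^{-1})^{-1}$, is what then converts the central block into $a^{-1}(db^{-1}-ca^{-1})^{-1}(d-b)b^{-1}$. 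Careful bookkeeping of the side on which each factor sits is essential, since no generic noncommutative cancellation rule is available. For $a^\circ$, I would repeat the same procedure with $J_2(A)$ in place of $A$ and $\lar$ in place of $\lal$: by Lemma~\ref{meaow}, $J_2(A)\in\widehat{\cal S}$, so $\Phi^{-1}(A)=\lar(J_1(J_2(A)))$ is well defined, and $J_2(A)$ has the same shape as $A$ with $(a,b,c,d)$ replaced by $(a^{-1},c^{-1},b^{-1},d^{-1})$. The analogous relations, combined with $d^{-1}-c^{-1}=-d^{-1}(d-c)c^{-1}$ and Lemma~\ref{iden2}(e), then pull the result back to the original variables and produce the stated formula for $a^\circ$.
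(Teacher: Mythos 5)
Your approach is correct but takes a genuinely different route from the paper's. The paper explicitly notes that one could ``verify the formulas by simplifying the expressions provided by the definition of $\Phi$'' --- which is exactly your plan --- and then deliberately avoids it: writing $C=\lal(A^{-1})=D_1^{-1}A^{-1}D_2$, it observes that $AD_1C=D_2$ is diagonal, so the vanishing of the six off-diagonal entries of $AD_1C$ gives the bilinear system (\ref{sysy}) in the unknown diagonal entries $x,y$ of $D_1$ and the entries of $J_2(\Phi(A))$; two elimination steps (the first pair of equations yields $x$, the second pair yields $a'$) land directly on (\ref{hfex}), with no computation of $A^{-1}$ and no identity left to verify. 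The formula for $a^\circ$ is obtained from the same identity read backwards, $J_2(A)=\lal((\Phi^{-1}(A))^{-1})$, rather than by substituting $(a^{-1},c^{-1},b^{-1},d^{-1})$ into a $\lar$-computation as you propose. Your direct route is nonetheless sound: the relations $u=-(d-1)^{-1}(c-1)s$ and $t=-s(b-1)(d-1)^{-1}$ are correct, and so is your intermediate expression $a'=(d-1)^{-1}(d-c)s(d-b)[d(d-1)+(d-c)s(d-b)]^{-1}(d-1)$, which after cancelling $s$ against $s^{-1}$ becomes $[d(d-1)(d-b)^{-1}s^{-1}+(d-c)]^{-1}(d-1)$; the identification of this with $(db^{-1}a-c)^{-1}(db^{-1}-1)$ is a true identity, provable with $d(d-b)^{-1}b=b(d-b)^{-1}d$ from Lemma~\ref{iden2}(b) and routine bracket expansion. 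What each approach buys: yours reuses the already-established $2\times2$ machinery of Lemma~\ref{inve}, while the paper's indirect method exists precisely to sidestep the step you yourself flag as the ``main obstacle.'' That obstacle is also the only real shortfall of your writeup: you name Lemma~\ref{iden2}(b),(c) as the tools but do not carry out the matching of your intermediate form to (\ref{hfex}), and in the noncommutative setting this verification cannot be waved through --- it needs to be written out (roughly the amount of manipulation the paper performs in, say, Lemma~\ref{invphi}) before the argument, and likewise the parallel $a^\circ$ computation, is complete.
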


\begin{proof}
It is possible to verify the formulas (\ref{hfex}) by simplifying the expressions provided by the definition of $\Phi$. However, an indirect approach proves to be shorter. Let $C=\lal(A^{-1})$.
By definition of $\Phi$,
$$
C=\lal(A^{-1})=J_2(\Phi(A))=\left(\begin{array}{ccc}
1&1&1\\
1&{a'}^{-1}&{c'}^{-1}\\
1&{b'}^{-1}&{d'}^{-1}
\end{array}\right).
$$
By definition of $\lal$, there are unique invertible diagonal $3\times 3$ matrices $D_1,D_2\in M_3(R)$ such that the top left entry of $D_1$ is $1$ and $C=\lal(A^{-1})=D_1^{-1}A^{-1}D_2$. It follows that $AD_1C=D_2$ is diagonal. Let $1,x,y$ be the diagonal entries of $D_1$. Then, using the above, we get
$$
AD_1C=\left(\begin{array}{ccc}
1&1&1\\
1&a&b\\
1&c&d
\end{array}\right)
\left(\begin{array}{ccc}
1&0&0\\
0&x&0\\
0&0&y
\end{array}\right)
\left(\begin{array}{ccc}
1&1&1\\
1&{a'}^{-1}&{c'}^{-1}\\
1&{b'}^{-1}&{d'}^{-1}
\end{array}\right)
\ \ \ \text{is diagonal.}
$$
A direct computation of the off-diagonal entries of $AD_1C$ shows that the above can be rewritten as the following system of six equations:
\begin{equation}\label{sysy}
\begin{array}{l}1+ax+by=0;\\ 1+cx+dy=0;\end{array}\quad
\begin{array}{l}1+x{a'}^{-1}+y{b'}^{-1}=0;\\ 1+cx{a'}^{-1}+dy{b'}^{-1}=0;\end{array}\ \
\begin{array}{l} 1+x{c'}^{-1}+y{d'}^{-1}=0;\\ 1+ax{c'}^{-1}+by{d'}^{-1}=0.
\end{array}
\end{equation}
The first pair of equations allows to determine $x$. Multiplying the first equation by $db^{-1}$ on the left and subtracting the result from the second equation, we get $(1-db^{-1})+(c-db^{-1}a)x=0$. This gives $x=a^{-1}(ca^{-1}-db^{-1})^{-1}(db^{-1}-1)$. Now we use the second pair to find $a'$. Namely, multiplying the fourth equation by $d^{-1}$ on the left and subtracting the result from the third equation, we get
$$
(1-d^{-1})+(1-d^{-1}c)x{a'}^{-1}=0\iff a'=-(1-d^{-1})^{-1}(1-d^{-1}c)x=(d-1)^{-1}(c-d)x.
$$
Plugging in $x=a^{-1}(ca^{-1}-db^{-1})^{-1}(db^{-1}-1)$, we obtain
$$
a'=f(a,b,c,d)=(d-1)^{-1}(c-d)a^{-1}(ca^{-1}-db^{-1})^{-1}(db^{-1}-1).
$$

The already used identity $\lal(A^{-1})=J_2(\Phi(A))$ applied to $\Phi(A)$ instead of $A$ gives
$J_2(A)=\lal((\Phi^{-1}(A))^{-1})$. By definition of $\lal$, there are unique invertible diagonal $3\times 3$ matrices $D_3,D_4\in M_3(R)$ such that the top left entry of $D_3$ is $1$ and $J_2(A)=D_3^{-1}\Phi^{-1}(A)^{-1}D_4$. Hence $\Phi^{-1}(A)D_3J_2(A)=D_4$. Let $1,u,v$ be the diagonal entries of $D_3$. Then, the equality $\Phi^{-1}(A)D_3J_2(A)=D_4$ yields
$$
\Phi(A)D_3J_2(A)=\left(\begin{array}{ccc}
1&1&1\\
1&a^\circ&b^\circ\\
1&c^\circ&d^\circ
\end{array}\right)
\left(\begin{array}{ccc}
1&0&0\\
0&u&0\\
0&0&v
\end{array}\right)
\left(\begin{array}{ccc}
1&1&1\\
1&{a}^{-1}&{c}^{-1}\\
1&{b}^{-1}&{d}^{-1}
\end{array}\right)
\ \ \ \text{is diagonal.}
$$
A direct computation of the off-diagonal entries of $\Phi^{-1}(A)D_3J_2(A)$ shows that the above matrix equation can be rewritten as the following system of equations:
\begin{equation}\label{sysy1}
\begin{array}{l}1+ua^{-1}+vb^{-1}=0;\\ 1+uc^{-1}+vd^{-1}=0\end{array}\quad
\begin{array}{l}1+a^\circ u+b^\circ v=0;\\ 1+a^\circ u{c}^{-1}+b^\circ v{d}^{-1}=0;\end{array}\ \
\begin{array}{l}1+c^\circ u+d^\circ v=0;\\ 1+c^\circ u{a}^{-1}+d^\circ v{b}^{-1}=0.\end{array}
\end{equation}
The first pair of equations allows to determine $u$. Multiplying the first equation by $bd^{-1}$ on the right and subtracting the result from the second equation, we get $(1-bd^{-1})+u(c^{-1}-a^{-1}bd^{-1})=0$. This gives $u=-(d-b)(c^{-1}d-a^{-1}b)^{-1}$. Now we use the second pair to find $a^\circ$. Namely, multiplying the fourth equation by $d$ on the right and subtracting the result from the third equation, we get
$$
(1-d)+a^\circ u(1-c^{-1}d)=0\iff a^\circ=-(1-d)(1-c^{-1}d)^{-1}u^{-1}.
$$
Plugging in $u=-(d-b)(c^{-1}d-a^{-1}b)^{-1}$, we obtain
$$
a^\circ=h(a,b,c,d)=(d-1)(d-c)^{-1}(db^{-1}-ca^{-1})(db^{-1}-1)^{-1}.
$$
Note that in the above computation, we have never inverted an element whose invertibility was not guaranteed by (\ref{SSS}). \end{proof}

Certainly, systems (\ref{sysy}) and (\ref{sysy1}) allow to get explicit expressions for all of $a'$, $b'$, $c'$, $d'$, $a^\circ$, $b^\circ$, $c^\circ$ and $d^\circ$.
Due to the Symmetry Lemma (Corollary~\ref{kley1}) we need to do it only for one element.

Sometimes the other way to write $\Phi(A)$ is more convenient.

\begin{lemma}\label{phiano} Let $A\in\widehat{\cal S}$. Keeping the notation introduced in $(\ref{notat})$, we can write the entries of $\Phi(A)$ in the following way:
\begin{equation}\label{phiconv}
\begin{array}{l}
a'=(d-1)^{-1}(d-c)a^{-1}(db^{-1}-ca^{-1})^{-1}(db^{-1}-1);\\
b'=(c-1)^{-1}(d-c)b^{-1}(db^{-1}-ca^{-1})^{-1}(ca^{-1}-1);\\
c'=(b-1)^{-1}(b-a)a^{-1}(db^{-1}-ca^{-1})^{-1}(db^{-1}-1);\\ d'=(a-1)^{-1}(b-a)b^{-1}(db^{-1}-ca^{-1})^{-1}(ca^{-1}-1).\end{array}
\end{equation}
\end{lemma}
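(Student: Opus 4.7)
The plan is to extend the derivation carried out in the proof of Lemma~\ref{nneeww}, reusing the system (\ref{sysy}) of six equations obtained there. Recall that (\ref{sysy}) expresses the condition that a specific matrix $AD_1C$ be diagonal, with $D_1$ the diagonal matrix of entries $1,x,y$ and $C=J_2(\Phi(A))$. The first pair of equations uniquely determines the auxiliary unknowns: Lemma~\ref{nneeww} gives $x=a^{-1}(ca^{-1}-db^{-1})^{-1}(db^{-1}-1)$, and a fully symmetric elimination (multiply the first equation by $ca^{-1}$ on the left and subtract from the second) yields $y=b^{-1}(db^{-1}-ca^{-1})^{-1}(ca^{-1}-1)$. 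Observe that the coefficients in the second pair match those in the second equation of the first pair, while the third pair matches the first equation of the first pair; this is what makes the same two elimination tricks work for both remaining pairs.

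The second pair of equations admits two natural eliminations. The one carried out in Lemma~\ref{nneeww} kills $yb'^{-1}$ and produces $a'$. The complementary elimination (multiply the third equation by $c$ on the left and subtract from the fourth) kills $xa'^{-1}$ and yields $(d-c)yb'^{-1}=c-1$, whence $b'=(c-1)^{-1}(d-c)y$; substituting the expression for $y$ above gives the stated formula for $b'$ directly, with no sign work required. Analogously, applying the two dual eliminations to the third pair, that is, multiplying the fifth equation by $b$ (respectively by $a$) on the left and subtracting the sixth, gives $(b-a)xc'^{-1}=1-b$ and $(a-b)yd'^{-1}=1-a$, whence $c'=(1-b)^{-1}(b-a)x$ and $d'=(1-a)^{-1}(a-b)y$. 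Substituting the closed expressions for $x$ and $y$, a pair of minus signs coming from $(1-b)^{-1}=-(b-1)^{-1}$ and $(ca^{-1}-db^{-1})^{-1}=-(db^{-1}-ca^{-1})^{-1}$ cancel to produce the stated formula for $c'$, and the same cancellation (with $a$ in place of $b$ and with $y$ in place of $x$) handles $d'$.

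The only point requiring vigilance is the ordering of factors in a noncommutative ring together with the central sign manipulations; since $-1$ lies in the centre, these cause no difficulty. Every element that needs to be inverted during the four eliminations ($a,b,c,d$, $a-1,b-1,c-1,d-1$, $b-a,d-c$, and $db^{-1}-ca^{-1}$) already appears in the list (\ref{SSS}) characterising $\widehat{\cal S}$, so the formulas are well-defined on the entire domain. I do not anticipate any real obstacle: each of the four formulas arises from a short elimination exactly parallel to the one performed in Lemma~\ref{nneeww}, and only the bookkeeping has to be done carefully.
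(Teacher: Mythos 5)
Your proof is correct, but it takes a genuinely different route from the paper's. The paper derives only the formula for $a'$ from the system (\ref{sysy}) in Lemma~\ref{nneeww}, then obtains $b'$, $c'$, $d'$ by applying the Symmetry Lemma (Corollary~\ref{kley1}), which produces the letter-permuted expressions (\ref{phiconvN}); since those expressions for $c'$ and $d'$ do not have the form claimed in (\ref{phiconv}) (they carry $c^{-1}(bd^{-1}-ac^{-1})^{-1}$ rather than $a^{-1}(db^{-1}-ca^{-1})^{-1}$), the bulk of the paper's proof is the verification of two nontrivial noncommutative identities reconciling the two forms. You instead go back to the six equations (\ref{sysy}) and perform the three complementary eliminations, solving for $y$ as well as $x$ and reading off $b'$, $c'$, $d'$ directly; I have checked that each elimination and each sign cancellation is as you state, and all four resulting expressions agree with (\ref{phiconv}). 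What your approach buys is the complete avoidance of the identity-checking step and of the Symmetry Lemma; what the paper's approach buys is that a single elimination suffices and the symmetric form (\ref{phiconvN}) is exhibited along the way. One small bookkeeping remark: your list of elements that must be inverted should also include $c-a$ and $d-b$, which are needed for the invertibility of $ca^{-1}-1$ and $db^{-1}-1$ and hence of $x$ and $y$ (you do invert $y$ when passing from $yb'^{-1}=(d-c)^{-1}(c-1)$ to $b'$); these are present in (\ref{SSS}), so this is not a gap.
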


\begin{proof} By Lemma~\ref{nneeww} and Corollary~\ref{kley1},
\begin{equation}\label{phiconvN}
\begin{array}{l}
a'=(d-1)^{-1}(d-c)a^{-1}(db^{-1}-ca^{-1})^{-1}(db^{-1}-1);\\
b'=(c-1)^{-1}(c-d)b^{-1}(ca^{-1}-db^{-1})^{-1}(ca^{-1}-1);\\
c'=(b-1)^{-1}(b-a)c^{-1}(bd^{-1}-ac^{-1})^{-1}(bd^{-1}-1);\\ d'=(a-1)^{-1}(a-b)d^{-1}(ac^{-1}-bd^{-1})^{-1}(ac^{-1}-1).\end{array}
\end{equation}
The first equalities in (\ref{phiconv}) and in (\ref{phiconvN}) are identical. The second equality in (\ref{phiconv}) is equivalent to the second equality in (\ref{phiconvN}): the right-hand sides differ by changing signs of two terms in the product.
We need to essentially rewrite expressions for $c'$ and $d'$ in order to calculate more conveniently $\Phi^2$. For passing from one expression for  $c'$ and $d'$ to another it is enough to
verify the identities
$$
\begin{array}{l}
(b-1)^{-1}(b-a)a^{-1}(db^{-1}-ca^{-1})^{-1}(db^{-1}-1)=(b-1)^{-1}(b-a)c^{-1}(bd^{-1}-ac^{-1})^{-1}(bd^{-1}-1);\\
(a-1)^{-1}(b-a)b^{-1}(db^{-1}-ca^{-1})^{-1}(ca^{-1}-1)=(a-1)^{-1}(a-b)d^{-1}(ac^{-1}-bd^{-1})^{-1}(ac^{-1}-1).
\end{array}
$$
Since $(b-1)^{-1}$ and $b-a$ are invertible (see (\ref{SSS})), the equalities in the above display are equivalent to
$$
\begin{array}{l}
a^{-1}(db^{-1}-ca^{-1})^{-1}(db^{-1}-1)=c^{-1}(bd^{-1}-ac^{-1})^{-1}(bd^{-1}-1);\\
b^{-1}(db^{-1}-ca^{-1})^{-1}(ca^{-1}-1)=-d^{-1}(ac^{-1}-bd^{-1})^{-1}(ac^{-1}-1).
\end{array}
$$
Since $bd^{-1}-1=-bd^{-1}(db^{-1}-1)$ and $ac^{-1}-1=-ac^{-1}(ca^{-1}-1)$, we can rewrite the above display as
$$
\begin{array}{l}
a^{-1}(db^{-1}-ca^{-1})^{-1}(db^{-1}-1)=-c^{-1}(bd^{-1}-ac^{-1})^{-1}bd^{-1}(db^{-1}-1);\\
b^{-1}(db^{-1}-ca^{-1})^{-1}(ca^{-1}-1)=d^{-1}(ac^{-1}-bd^{-1})^{-1}ac^{-1}(ca^{-1}-1).
\end{array}
$$
From (\ref{SSS}) it follows that $db^{-1}-1$ and $ca^{-1}-1$ are invertible. This allows us to multiply the above equations by $(db^{-1}-1)^{-1}$ and $(ca^{-1}-1)^{-1}$ on the right respectively. While we are at it, we also multiply these equations on the left by the invertible elements $c$ and $d$ respectively. The equations  take form
$$
ca^{-1}(db^{-1}-ca^{-1})^{-1}=-(bd^{-1}-ac^{-1})^{-1}bd^{-1},\ \ db^{-1}(db^{-1}-ca^{-1})^{-1}=(ac^{-1}-bd^{-1})^{-1}ac^{-1}.
$$
Pulling the out-of-bracket terms inside, we see that these equations are equivalent to
$$
(db^{-1}ac^{-1}-1)^{-1}=-(1-db^{-1}ac^{-1})^{-1},\ \ (1-ca^{-1}bd^{-1})^{-1}=(1-ca^{-1}bd^{-1})^{-1}.
$$
Both equations are trivially satisfied, which completes the proof.
\end{proof}

Now we will prove the key fact for our statement about domains.

\begin{lemma}\label{invphi} Let $M\in {\cal S}$. Then $M^{-1}\in {\cal S}$.
\end{lemma}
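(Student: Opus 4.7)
The claim $M^{-1}\in{\cal S}$ breaks into two assertions: (i) all square submatrices of $M^{-1}$ are invertible, and (ii) $J_2(M^{-1})$ is invertible. Statement (i) is already supplied by (\ref{J1S}) in Lemma~\ref{meaow}, so the entire content of the lemma lies in (ii).

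For (ii), the plan is to reduce to a $2\times 2$ invertibility question. Since ${\cal S}$ is ${\cal D}_{LR}$-invariant and $(D_1^{-1}MD_2)^{-1}=D_2^{-1}M^{-1}D_1$ belongs to the orbit of $M^{-1}$, the property $M^{-1}\in{\cal S}$ depends only on the ${\cal D}_{LR}$-orbit of $M$; by Lemma~\ref{4pA} we may thus assume $M\in\widehat{\cal S}$. Next, $J_2(M^{-1})=J(M)\sim\Phi(M)$ by (\ref{phij}), and since $\sim$ preserves invertibility, it suffices to show $\Phi(M)\in M_3^*(R)$. Writing $\Phi(M)=\left(\begin{array}{ccc}1&1&1\\1&a'&b'\\1&c'&d'\end{array}\right)$ with $a',b',c',d'$ as in Lemma~\ref{phiano}, Lemma~\ref{inve} reduces this to invertibility of the $2\times 2$ matrix $B'=\left(\begin{array}{cc}a'-1&b'-1\\c'-1&d'-1\end{array}\right)$. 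Lemma~\ref{2t2inv} then reduces it to checking (a) $a'-1,\,b'-1,\,c'-1,\,d'-1\in R^*$ and (b) $(d'-1)(b'-1)^{-1}-(c'-1)(a'-1)^{-1}\in R^*$; by the Symmetry Lemma (Corollary~\ref{kley1}), (a) collapses to the single claim $a'-1\in R^*$.

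The hard part will be the noncommutative algebra needed to deduce (a) and (b) from the fifteen invertibility hypotheses of (\ref{SSS}). Starting with $a'-1=(d-1)^{-1}\bigl[(d-c)a^{-1}(db^{-1}-ca^{-1})^{-1}(db^{-1}-1)-(d-1)\bigr]$, I would apply Lemma~\ref{iden2}(b) to rewrite the central product $a^{-1}(db^{-1}-ca^{-1})^{-1}$ in a more manageable form and then collect the surrounding factors until the bracketed expression exhibits a visible invertibility supplied by (\ref{SSS}). For (b), the same machinery combined with parts (c)--(e) of Lemma~\ref{iden2} is expected to reduce the scalar $(d'-1)(b'-1)^{-1}-(c'-1)(a'-1)^{-1}$ to a product of invertible cofactors with one (or both) of $(d-c)^{-1}(c-1)-(b-a)^{-1}(a-1)$ (equivalent via (\ref{AINV}) to $M\in M_3^*$) and $(c-1)(d-c)^{-1}d-(a-1)(b-a)^{-1}b$ (equivalent via (\ref{J2AINV}) to $J_2(M)\in M_3^*$), both in $R^*$ by the hypothesis $M\in\widehat{\cal S}$.
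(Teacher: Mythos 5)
Your reduction reproduces the skeleton of the paper's argument exactly: part (i) is (\ref{J1S}); for part (ii) one passes to the orbit representative in $\widehat{\cal S}$, replaces $J_2(M^{-1})=J(M)$ by $\Phi(A)$ via (\ref{phij}), and converts the invertibility of $\Phi(A)$ into the invertibility of a single scalar built from $a',b',c',d'$. Two remarks on that framework before the main point. First, your item (a) does not need any computation from the formula for $a'$: since all $2\times 2$ submatrices of $M^{-1}$ are invertible by (\ref{J1S}) and they are Hadamard invertible, Lemma~\ref{2t2inv} gives the invertibility of the $2\times 2$ submatrices of $J_2(M^{-1})\sim\Phi(A)$, and Lemma~\ref{subma} then hands you all thirteen elements $a'-1,\dots,d'{b'}^{-1}-c'{a'}^{-1}$ at once --- this is exactly the shortcut the paper takes, and it also supplies the invertibility of $d'-c'$ and $b'-a'$ that lets one pass freely (via Lemma~\ref{iden2}(d)) between your criterion $(d'-1)(b'-1)^{-1}-(c'-1)(a'-1)^{-1}$ and the paper's $q=(d'-c')^{-1}(c'-1)-(b'-a')^{-1}(a'-1)$.

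The genuine gap is in item (b), which is not a routine corollary of Lemma~\ref{iden2}(c)--(e) but is the entire substance of the lemma. The paper spends roughly two pages proving the specific noncommutative identity
$q=-w^{-1}\bigl[(c-1)(d-c)^{-1}d-(a-1)(b-a)^{-1}b\bigr]w$ with
$w=b^{-1}(b-a)a^{-1}(db^{-1}-ca^{-1})^{-1}(db^{-1}-1)+b^{-1}-1$:
it computes $(d'-c')^{-1}(c'-1)$ and $(b'-a')^{-1}(a'-1)$ separately as conjugates $w^{-1}(a-1)(b-a)^{-1}bw$ and $u^{-1}(c-1)(d-c)^{-1}du$ by two a priori different elements $w$ and $u$ (obtained from each other by the Klein-group permutation $(a,b,c,d)\mapsto(c,d,a,b)$), and the proof hinges on the nonobvious identity $u=w$, which the authors themselves call a small miracle and which required replacing the natural conjugator $1+x-y$ by $w$ precisely so that it would hold. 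Your proposal predicts the right target --- the scalar should reduce to a conjugate of the $J_2(A)$-invertibility witness $(c-1)(d-c)^{-1}d-(a-1)(b-a)^{-1}b$ from (\ref{J2AINV}) (not, as you hedge, possibly the $A$-witness from (\ref{AINV})) --- but ``is expected to reduce'' is where the proof actually lives; without producing the conjugating element and verifying the identity, the argument is a plan rather than a proof.
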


\begin{proof} By Lemma~\ref{meaow}, all square submatrices of $M$ are invertible. Thus it suffices to prove that $J_2(M^{-1})=J(M)$ is invertible. By Lemma~\ref{4pA}, we can choose
$$
A= \left(\begin{array}{ccc}
1&1&1\\
1&a&b\\
1&c&d
\end{array}\right)\in {\widehat M}_3(R)\ \ \text{such that $A\sim M$.}
$$
Then $A\in \widehat{\cal S}\subseteq{\rm dom}(\Phi)$. By (\ref{orb1}) and (\ref{phij}), $J(M)\sim J(A)\sim \Phi(A)$. Hence the proof will be complete if we demonstrate that $\Phi(A)$ is invertible.

Since $1\times 1$ and $2\times 2$ submatrices of $M^{-1}$ are invertible, Lemma~\ref{2t2inv} guarantees that the same happens for $J_2(M^{-1})=J(M)\sim\Phi(A)$. Thus the $1\times 1$ and $2\times 2$ submatrices of $\Phi(A)$ are invertible. By Lemma~\ref{phiano},
$$
\Phi(A)=\left(\begin{array}{ccc}
1&1&1\\
1&a'&b'\\
1&c'&d'
\end{array}\right)\ \ \text{with $a'$, $b'$, $c'$ and $d'$ given by (\ref{phiconv}).}
$$
Since $1\times 1$ and $2\times 2$ submatrices of $\Phi(A)$ are invertible, Lemma~\ref{subma} implies that the 13 elements
$a'$, $b'$, $c'$, $d'$, $a'-1$, $b'-1$, $c'-1$, $d'-1$, $d'-c'$, $d'-b'$, $c'-a'$, $b'-a'$ and $d'{b'}^{-1}-c'{a'}^{-1}$ are invertible in $R$. The same lemma guarantees that the invertibility of $\Phi(A)$ is equivalent to the invertibility of
$$
q=(d'-c')^{-1}(c'-1)-(b'-a')^{-1}(a'-1).
$$
By (\ref{phiconv}),
\begin{equation}\label{cd}
\begin{array}{l}
c'=(b-1)^{-1}(b-a)a^{-1}(db^{-1}-ca^{-1})^{-1}(db^{-1}-1);\\ d'=(a-1)^{-1}(b-a)b^{-1}(db^{-1}-ca^{-1})^{-1}(ca^{-1}-1).\end{array}
\end{equation}
Denote
$$
x=(db^{-1}-ca^{-1})^{-1}(ca^{-1}-1)\ \ \text{and}\ \ y=a(b-a)^{-1}(b-1).
$$
Next, observe that
\begin{align*}
x+1&=(db^{-1}-ca^{-1})^{-1}(ca^{-1}-1)+(db^{-1}-ca^{-1})^{-1}(db^{-1}-ca^{-1})
\\
&=(db^{-1}-ca^{-1})^{-1}[ca^{-1}-1+db^{-1}-ca^{-1}]=(db^{-1}-ca^{-1})^{-1}(db^{-1}-1)\ \ \ \text{and}
\\
y-1&=a(b-a)^{-1}b-a(b-a)^{-1}-(b-a)(b-a)^{-1}=a(b-a)^{-1}b-(b-a+a)(b-a)^{-1}
\\
&=a(b-a)^{-1}b-b(b-a)^{-1}=b(b-a)^{-1}a-b(b-a)^{-1}=b(b-a)^{-1}(a-1),
\end{align*}
where the second last equality follows from Part (b) of Lemma~\ref{iden2}. Plugging the data from the above two expressions into (\ref{cd}), we get
$$
d'=(y-1)^{-1}x\ \ \ \text{and}\ \ \ c'=y^{-1}(x+1).
$$
Thus
\begin{align*}
(d'-c')^{-1}(c'-1)&=((y-1)^{-1}x-y^{-1}(x+1))^{-1}(y^{-1}(x+1)-1)
\\
&=(yx-(y-1)(x+1))^{-1}(y-1)yy^{-1}(x+1-y)=(x+1-y)^{-1}(y-1)(x+1-y).
\end{align*}
Denote
$$
w=b^{-1}(b-a)a^{-1}(x+1-y).
$$
Then $w$ is invertible and the above two formulas yield
$$
(d'-c')^{-1}(c'-1)=w^{-1}b^{-1}(b-a)a^{-1}(y-1)a(b-a)^{-1}bw.
$$
Using the equality $y-1=b(b-a)^{-1}(a-1)$, we get
\begin{align*}
&b^{-1}(b-a)a^{-1}(y-1)a(b-a)^{-1}b=b^{-1}(b-a)a^{-1}b(b-a)^{-1}(a-1)a(b-a)^{-1}b
\\
&\ \ =a^{-1}(b-a)b^{-1}b(b-a)^{-1}(a-1)a(b-a)^{-1}b=a^{-1}(a-1)a(b-a)^{-1}b=(a-1)(b-a)^{-1}b.
\end{align*}
Next, by the definition of $w$ and the formulas $x+1=(db^{-1}-ca^{-1})^{-1}(db^{-1}-1)$ and $y=a(b-a)^{-1}(b-1)$ we get
\begin{align*}
w&=b^{-1}(b-a)a^{-1}(x+1-y)=b^{-1}(b-a)a^{-1}[(db^{-1}-ca^{-1})^{-1}(db^{-1}-1)-a(b-a)^{-1}(b-1)]
\\
&=b^{-1}(b-a)a^{-1}(db^{-1}-ca^{-1})^{-1}(db^{-1}-1)-b^{-1}(b-a)a^{-1}a(b-a)^{-1}(b-1)
\\
&=b^{-1}(b-a)a^{-1}(db^{-1}-ca^{-1})^{-1}(db^{-1}-1)-b^{-1}(b-1)
\\
&=b^{-1}(b-a)a^{-1}(db^{-1}-ca^{-1})^{-1}(db^{-1}-1)+b^{-1}-1.
\end{align*}
By the above three displays,
\begin{equation}\label{dcre}
\begin{array}{l}
(d'-c')^{-1}(c'-1)=w^{-1}(a-1)(b-a)^{-1}bw,\ \ \text{where}
\\
w=b^{-1}(b-a)a^{-1}(db^{-1}-ca^{-1})^{-1}(db^{-1}-1)+b^{-1}-1.
\end{array}
\end{equation}

By Corollary~\ref{kley1}, an expression of $(b'-a')^{-1}(a'-1)$ in terms of $a$, $b$, $c$ and $d$ can be obtained from the expression (\ref{dcre}) for $(d'-c')^{-1}(c'-1)$ by the letter permutation $(a,b,c,d)\mapsto (c,d,a,b)$. This yields
\begin{equation}\label{bare}
\begin{array}{l}
(b'-a')^{-1}(a'-1)=u^{-1}(c-1)(d-c)^{-1}du,\ \ \text{where}
\\
u=d^{-1}(d-c)c^{-1}(bd^{-1}-ac^{-1})^{-1}(bd^{-1}-1)+d^{-1}-1.
\end{array}
\end{equation}

Now a small miracle happens (frankly speaking, we have replaced the conjugating element $1+x-y$ by $w$ precisely for it to happen). Namely,
\begin{equation*}
u=w.
\end{equation*}
Indeed, the identity $u=w$ is equivalent to
$$
b^{-1}(b-a)a^{-1}(db^{-1}-ca^{-1})^{-1}(db^{-1}-1)+b^{-1}=d^{-1}(d-c)c^{-1}(bd^{-1}-ac^{-1})^{-1}(bd^{-1}-1)+d^{-1}.
$$
Using the obvious equality $(bd^{-1}-ac^{-1})^{-1}=-ca^{-1}(db^{-1}-ca^{-1})^{-1}db^{-1}$, we can replace $(bd^{-1}-ac^{-1})^{-1}$ in the right-hand side of the above display by $-ca^{-1}(db^{-1}-ca^{-1})^{-1}db^{-1}$, which after a couple of cancelations results in the following equality equivalent to $u=w$:
$$
b^{-1}(b-a)a^{-1}(db^{-1}-ca^{-1})^{-1}(db^{-1}-1)+b^{-1}=d^{-1}(d-c)a^{-1}(db^{-1}-ca^{-1})^{-1}(db^{-1}-1)+d^{-1}.
$$
Multiplying both sides by $(db^{-1}-1)^{-1}$ on the right leads to
$$
b^{-1}(b-a)a^{-1}(db^{-1}-ca^{-1})^{-1}+b^{-1}(db^{-1}-1)^{-1}=d^{-1}(d-c)a^{-1}(db^{-1}-ca^{-1})^{-1}+d^{-1}(db^{-1}-1)^{-1}.
$$
Transferring the first summand in the right-hand side to the left and the second summand in the left-hand side to the right we arrive to the following equality still equivalent to $u=w$:
$$
(b^{-1}(b-a)-d^{-1}(d-c))a^{-1}(db^{-1}-ca^{-1})^{-1}=(d^{-1}-b^{-1})(db^{-1}-1)^{-1}.
$$
Since $(db^{-1}-1)^{-1}=-(d^{-1}-b^{-1})^{-1}d^{-1}$, the equality simplifies further:
$$
(b^{-1}(b-a)-d^{-1}(d-c))a^{-1}(db^{-1}-ca^{-1})^{-1}=-d^{-1}.
$$
After multiplying by $(db^{-1}-ca^{-1})a$ on the right, it has a shape
$$
b^{-1}(b-a)-d^{-1}(d-c)=-d^{-1}(db^{-1}-ca^{-1})a.
$$
After opening up the brackets the equality takes form
$$
1-b^{-1}a-1+d^{-1}c=-b^{-1}a+d^{-1}c.
$$
It is obviously correct, so we have an equality $u=w$. Now (\ref{dcre}) and (\ref{bare}) yield
$$
q=(d'-c')^{-1}(c'-1)-(b'-a')^{-1}(a'-1)=-w^{-1}[(c-1)(d-c)^{-1}d-(a-1)(b-a)^{-1}b]w.
$$
Thus the invertibility of $q$ is equivalent to the invertibility of $p=(c-1)(d-c)^{-1}d-(a-1)(b-a)^{-1}b$. The latter is invertible according to Lemma~\ref{subma}. In fact, (\ref{J2AINV}) says that the invertibility of $p$ is equivalent to the invertibility of $J_2(A)$. Since the invertibility of $q$ implies the invertibility of $\Phi(A)$, $\Phi(A)$ is invertible and therefore so is $J_2(M^{-1})$. The proof is complete.
\end{proof}

\subsection{Proof of Theorem~\ref{DOMA}}

By Lemma~\ref{meaow}, ${\rm dom}(J)\cap {\rm dom}(J^{-1})={\cal S}$ and ${\rm dom}(\Phi)\cap {\rm dom}(\Phi^{-1})=\widehat{\cal S}$. If $M\in {\cal S}$, then $J_2(M)\in{\cal S}$ by Lemma~\ref{meaow} and $J_1(M)\in{\cal S}$ by Lemma~\ref{invphi}. Since $J_1$ and $J_2$ are involutions, each of them maps $\cal S$ onto itself bijectively. Hence, so does $J=J_2\circ J_1$.

Next, let $A\in\widehat{\cal S}$. By \ref{phij}, $J(A)\sim \Phi(A)$. Since $J(A)\in{\cal S}$, we have $\Phi(A)\in{\cal S}$. It follows that $\Phi(\widehat{\cal S})\subseteq{\widehat{\cal S}}$. Similar argument gives $\Phi^{-1}(\widehat{\cal S})\subseteq{\widehat{\cal S}}$. Hence $\Phi$ maps $\widehat{\cal S}$ onto itself bijectively.

Now let $k\geq 2$. Since $J$ maps $\cal S$ onto itself bijectively, ${\cal S}\subseteq {\rm dom}(J^k)$. On the other hand, if $M\in{\rm dom}(J^k)$, then $J(M)\in{\rm dom}(J^{-1})$ and $J(M)\in {\rm dom}(J^{k-1})\subseteq {\rm dom}(J)$. Thus $J(M)\in {\rm dom}(J^{-1})\cap {\rm dom}(J)={\cal S}$. Then $M=J^{-1}(J(M))\in {\cal S}$ ($J$ maps $\cal S$ onto itself bijectively). Hence ${\cal S}\supseteq {\rm dom}(J^k)$ and therefore ${\cal S}={\rm dom}(J^k)$. The equality $\widehat{\cal S}={\rm dom}(\Phi^k)$ is proved in exactly the same way.

\subsection{A closed formula for $\Phi^2$}

Now applying previously obtained expression for $\Phi$ we get the following formula for $\Phi^2$.

\begin{lemma}\label{PHI2E} Let
\begin{equation*}
A= \left(\begin{array}{ccc}
1&1&1\\
1&a&b\\
1&c&d
\end{array}\right)\in \widehat{\cal S},\ \ \
\Phi(A)=\left(\begin{array}{ccc}
1&1&1\\
1&a'&b'\\
1&c'&d'
\end{array}\right),\ \ \text{and}\ \ \Phi^2(A)=\left(\begin{array}{ccc}
1&1&1\\
1&a''&b''\\
1&c''&d''
\end{array}\right).
\end{equation*}
Then
\begin{equation}\label{aprpr}
a''=\zeta^{-1}(d-c)^{-1}(db^{-1}-ca^{-1})(db^{-1}-1)^{-1}(d-1)\zeta,\ \ \text{where}\ \
\zeta=[(d-c)^{-1}(c-1)-(b-a)^{-1}(a-1)\bigr].
\end{equation}
 \end{lemma}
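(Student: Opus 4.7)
The plan is to apply the closed formula for $\Phi$ from Lemma~\ref{nneeww} to the matrix $\Phi(A)$, using the alternative expressions for the entries of $\Phi(A)$ given in Lemma~\ref{phiano}. By the Symmetry Lemma (Corollary~\ref{kley1}) it is enough to compute the entry $a''$; the formulas for $b''$, $c''$, $d''$ then follow by the standard permutations of $(a,b,c,d)$. Concretely, one must substitute $a', b', c', d'$ into
$$
a'' = (d'-1)^{-1}(d'-c')(a')^{-1}\bigl(d'(b')^{-1} - c'(a')^{-1}\bigr)^{-1}\bigl(d'(b')^{-1}-1\bigr).
$$

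The crucial feature of the expressions in Lemma~\ref{phiano} is that $a'$ and $c'$ differ only in their leftmost factor (they share the common right block $a^{-1}(db^{-1}-ca^{-1})^{-1}(db^{-1}-1)$), and similarly $b'$ and $d'$ share a common right block. Forming $c'(a')^{-1}$ and $d'(b')^{-1}$ annihilates these blocks. Setting $\mu = (a-1)^{-1}(b-a)$, $\nu = (b-1)^{-1}(b-a)$, $\sigma = (d-c)^{-1}(c-1)$, $\tau = (d-c)^{-1}(d-1)$, one obtains $c'(a')^{-1} = \nu\tau$ and $d'(b')^{-1} = \mu\sigma$. The elementary identities $\tau = 1+\sigma$ and $\nu^{-1} - \mu^{-1} = 1$ (equivalently $\mu - \nu = \mu\nu$) follow from $(d-1) = (d-c)+(c-1)$ and $(b-a) = (b-1)-(a-1)$; together they show that the element $\zeta$ of the statement coincides with both $\sigma - \mu^{-1}$ and $\tau - \nu^{-1}$. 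A short computation then gives
$$
d'(b')^{-1} - 1 = \mu\zeta, \qquad d'(b')^{-1} - c'(a')^{-1} = (\mu-\nu)\zeta = \mu\nu\zeta.
$$
Substituting these reduces the right portion of $a''$ to $(a')^{-1}\zeta^{-1}\nu^{-1}\zeta$, so that
$$
a'' = (d'-1)^{-1}(d'-c')(a')^{-1}\zeta^{-1}\nu^{-1}\zeta.
$$

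The main obstacle is the left factor $(d'-1)^{-1}(d'-c')(a')^{-1}$, which does not admit the same immediate cancellation because $d'-1$ and $d'-c'$ are not built from the shared right blocks above. My plan is to use $d' = \mu\sigma\, b'$ and $c' = \nu\tau\, a'$ to rewrite $d'-c' = \mu\sigma\, b' - \nu\tau\, a'$ and $d'-1 = \mu\sigma\, b' - 1$, then compute $b'(a')^{-1}$ explicitly from Lemma~\ref{phiano} and reduce the whole left block to a rational expression in $a, b, c, d$. I expect the nontrivial algebraic identity
$$
b^{-1}(b-a)a^{-1}(db^{-1}-ca^{-1})^{-1}(db^{-1}-1)+b^{-1}-1 = d^{-1}(d-c)c^{-1}(bd^{-1}-ac^{-1})^{-1}(bd^{-1}-1)+d^{-1}-1,
$$
which played the role of a ``miracle'' in the proof of Lemma~\ref{invphi}, to resurface here and force the identification $(d'-1)^{-1}(d'-c')(a')^{-1} = \zeta^{-1}(d-c)^{-1}(db^{-1}-ca^{-1})(db^{-1}-1)^{-1}(d-1)\nu\zeta$. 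Once this holds, the cancellation $\nu\zeta\cdot\zeta^{-1}\nu^{-1}\zeta = \zeta$ produces the claimed $a'' = \zeta^{-1}(d-c)^{-1}(db^{-1}-ca^{-1})(db^{-1}-1)^{-1}(d-1)\zeta$, so that the conjugating element $\zeta$ emerges symmetrically on both sides, mirroring the two equal expressions $\sigma - \mu^{-1}$ and $\tau - \nu^{-1}$ that define it.
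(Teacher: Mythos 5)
Your reduction of the right-hand portion of the formula is correct and matches what the paper does: the computations $d'(b')^{-1}=\mu\sigma$, $c'(a')^{-1}=\nu\tau$, the identities $\tau=1+\sigma$, $\nu^{-1}-\mu^{-1}=1$, and the resulting factorizations $d'(b')^{-1}-1=\mu\zeta$, $d'(b')^{-1}-c'(a')^{-1}=\mu\nu\zeta$ all check out, and they correctly reduce the problem to showing
$(d'-1)^{-1}(d'-c')(a')^{-1}=\zeta^{-1}(d-c)^{-1}(db^{-1}-ca^{-1})(db^{-1}-1)^{-1}(d-1)\nu\zeta$.
But this last identity is exactly where the substance of the lemma lies, and you do not prove it: you state a plan (``My plan is to \dots'', ``I expect \dots'') rather than carry out the computation. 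That is a genuine gap, not a routine verification left to the reader.

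Moreover, the route you sketch for closing it is doubtful. You propose to express $d'-c'$ via $b'(a')^{-1}$, but $b'$ and $a'$ do not share a common right block in Lemma~\ref{phiano} (their rightmost factors are $(ca^{-1}-1)$ versus $(db^{-1}-1)$), so $b'(a')^{-1}$ does not collapse the way $c'(a')^{-1}$ does. The paper instead computes $d'(a')^{-1}$ directly, using the observation that $x=(db^{-1}-ca^{-1})^{-1}(ca^{-1}-1)$ commutes with $(x+1)^{-1}$ to swap the two middle factors and obtain the clean expression $d'(a')^{-1}=(a-1)^{-1}(b-a)(d-b)^{-1}(c-a)(d-c)^{-1}(d-1)$; it then writes $a''$ as a product of four bracketed terms, conjugates each bracket by suitable invertible elements, and verifies two separate identities ($w_2=(d-b)^{-1}(d-1)$ and $w_1=(d-c)^{-1}(db^{-1}-ca^{-1})b$) by direct manipulation. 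The ``miracle'' identity $u=w$ that you hope will resurface belongs to the proof of Lemma~\ref{invphi} and does not in fact appear in the proof of this lemma, so your expectation is misplaced as well as unverified. To complete your argument you would need to actually establish the displayed identity for the left factor, and the most economical way to do so is the paper's $x$-swap computation of $d'(a')^{-1}$ followed by the $w_1$, $w_2$ verifications.
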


 \begin{proof} By Theorem~\ref{DOMA}, $\Phi(A)\in\widehat{S}$. By Lemma~\ref{phiano},
\begin{equation}\label{start}
\begin{array}{l}
a'=(d-1)^{-1}(d-c)a^{-1}(db^{-1}-ca^{-1})^{-1}(db^{-1}-1);\\
b'=(c-1)^{-1}(d-c)b^{-1}(db^{-1}-ca^{-1})^{-1}(ca^{-1}-1);\\
c'=(b-1)^{-1}(b-a)a^{-1}(db^{-1}-ca^{-1})^{-1}(db^{-1}-1);\\
d'=(a-1)^{-1}(b-a)b^{-1}(db^{-1}-ca^{-1})^{-1}(ca^{-1}-1);\\
a''=(d'-1)^{-1}(d'-c'){a'}^{-1}(d'{b'}^{-1}-c'{a'}^{-1})^{-1}(d'{b'}^{-1}-1).\end{array}
\end{equation}
Using these formulas and performing straightforward cancellations, we get
\begin{equation}\label{dbca}
d'{b'}^{-1}=(a-1)^{-1}(b-a)(d-c)^{-1}(c-1)\ \ \text{and}\ \ c'{a'}^{-1}=(b-1)^{-1}(b-a)(d-c)^{-1}(d-1).
\end{equation}
As in the proof of Lemma~\ref{invphi}, denoting $x=(db^{-1}-ca^{-1})^{-1}(ca^{-1}-1)$ and observing that $x+1=(db^{-1}-ca^{-1})^{-1}(db^{-1}-1)$, we can by means of (\ref{start}) write
$$
d'{a'}^{-1}=(a-1)^{-1}(b-a)b^{-1}x(x+1)^{-1}a(d-c)^{-1}(d-1)=(a-1)^{-1}(b-a)b^{-1}(x+1)^{-1}xa(d-c)^{-1}(d-1).
$$
The point of introducing $x$ was to swap $x$ and $(x+1)^{-1}$. Now we plug $x=(db^{-1}-ca^{-1})^{-1}(ca^{-1}-1)$ and $x+1=(db^{-1}-ca^{-1})^{-1}(db^{-1}-1)$ back in:
$$
d'{a'}^{-1}=(a-1)^{-1}(b-a)b^{-1}(db^{-1}-1)^{-1}(db^{-1}-ca^{-1})(db^{-1}-ca^{-1})^{-1}(ca^{-1}-1)a(d-c)^{-1}(d-1).
$$
After obvious cancellations, we get
\begin{equation}\label{da}
d'{a'}^{-1}=(a-1)^{-1}(b-a)(d-b)^{-1}(c-a)(d-c)^{-1}(d-1).
\end{equation}
Plugging (\ref{dbca}), (\ref{da}) and the fourth line of (\ref{start}) into the last line of (\ref{start}), we get
\begin{align*}
a''&=\bigl[(a-1)^{-1}(b-a)b^{-1}(db^{-1}-ca^{-1})^{-1}(ca^{-1}-1)-1\bigr]^{-1}
\\
&\quad\times\bigl[(a-1)^{-1}(b-a)(d-b)^{-1}(c-a)(d-c)^{-1}(d-1)-(b-1)^{-1}(b-a)(d-c)^{-1}(d-1)\bigr]
\\
&\quad\times\bigl[(a-1)^{-1}(b-a)(d-c)^{-1}(c-1)-(b-1)^{-1}(b-a)(d-c)^{-1}(d-1)\bigr]^{-1}
\\
&\quad\times\bigl[(a-1)^{-1}(b-a)(d-c)^{-1}(c-1)-1\bigr].
\end{align*}
If the terms in the lines of the above display are $r_1$, $r_2$, $r_3$ and $r_4$, it reads $a''=r_1r_2r_3r_4$. Now if we replace $r_1$, $r_2$, $r_3$ and $r_4$ by $r_1(a-1)^{-1}(b-a)$, $(b-a)^{-1}(a-1)r_2(d-1)^{-1}(d-c)$, $(d-c)^{-1}(d-1)r_3(a-1)^{-1}(b-a)$ and $(b-a)^{-1}(a-1)r_4$, their product does not change. On the other hand the new terms admit straightforward cancellations, which leads to
\begin{align*}
a''&=\bigl[b^{-1}(db^{-1}-ca^{-1})^{-1}(ca^{-1}-1)-(b-a)^{-1}(a-1)\bigr]^{-1}
\\
&\quad\times\bigl[(d-b)^{-1}(c-a)-(b-a)^{-1}(a-1)(b-1)^{-1}(b-a)\bigr]
\\
&\quad\times\bigl[(d-c)^{-1}(c-1)(d-1)^{-1}(d-c)-(b-1)^{-1}(b-a)\bigr]^{-1}
\\
&\quad\times\bigl[(d-c)^{-1}(c-1)-(b-a)^{-1}(a-1)\bigr].
\end{align*}
Next, using Lemma~\ref{iden2}, we easily get
\begin{align*}
(b-a)^{-1}(a-1)(b-1)^{-1}(b-a)&=(b-1)^{-1}(b-1)(b-a)^{-1}(a-1)(b-1)^{-1}(b-a)
\\
&=(b-1)^{-1}(a-1)(b-a)^{-1}(b-1)(b-1)^{-1}(b-a)=(b-1)^{-1}(a-1).
\end{align*}
In exactly the same way,
$$
(d-c)^{-1}(c-1)(d-1)^{-1}(d-c)=(d-1)^{-1}(c-1).
$$
Plugging this into the last expression of $a''$ and observing that the last term is exactly $\zeta$ from (\ref{aprpr}), which is invertible according to Lemma~\ref{subma}, we get
\begin{align*}
\zeta a''\zeta^{-1}&=\bigl[(d-c)^{-1}(c-1)-(b-a)^{-1}(a-1)\bigr]\cdot\bigl[b^{-1}(db^{-1}-ca^{-1})^{-1}(ca^{-1}-1)-(b-a)^{-1}(a-1)\bigr]^{-1}
\\
&\quad\times\bigl[(d-b)^{-1}(c-a)-(b-1)^{-1}(a-1)\bigr]\cdot\bigl[(d-1)^{-1}(c-1)-(b-1)^{-1}(b-a)\bigr]^{-1}.
\end{align*}
We deal with the terms $w_1$ and $w_2$  in the two lines of the above display (it reads $\zeta a''\zeta^{-1}=w_1w_2$) separately. Denoting $u=(b-1)^{-1}(a-1)$, $v=(d-1)^{-1}(c-1)$, $d-1=\delta$ and $b-1=\beta$, we have $c-1=\delta v$ and $a-1=\beta u$. In this notation, we have
\begin{align*}
w_2&=\bigl[(d-b)^{-1}(c-a)-(b-1)^{-1}(a-1)\bigr]\cdot\bigl[(d-1)^{-1}(c-1)-(b-1)^{-1}(b-a)\bigr]^{-1}
\\
&=\bigl[(\delta-\beta)^{-1}(\delta v-\beta u)-u\bigr](v-u)^{-1}=(\delta-\beta)^{-1}(\delta v-\beta u)(v-u)^{-1}-
u(v-u)^{-1}
\\
&=(\delta-\beta)^{-1}(\delta v-\beta u)(v-u)^{-1}-u(v-u)^{-1}=
(\delta-\beta)^{-1}(\delta v-\beta v+\beta v-\beta u)(v-u)^{-1}-u(v-u)^{-1}
\\
&=(\delta-\beta)^{-1}(\delta -\beta )v(v-u)^{-1}+(\delta-\beta)^{-1}\beta(v-u)(v-u)^{-1}-u(v-u)^{-1}
\\
&=(v-u)(v-u)^{-1}+(\delta-\beta)^{-1}\beta=1+(\delta-\beta)^{-1}\beta=
(\delta-\beta)^{-1}(\delta-\beta)+(\delta-\beta)^{-1}\beta
\\
&=(\delta-\beta)^{-1}\delta=(d-b)^{-1}(d-1).
\end{align*}
Now we shall demonstrate that $w_1=(d-c)^{-1}(db^{-1}-ca^{-1})b$, that is,
\begin{align*}
&\bigl[(d-c)^{-1}(c-1)-(b-a)^{-1}(a-1)\bigr]\cdot\bigl[b^{-1}(db^{-1}-ca^{-1})^{-1}(ca^{-1}-1)-(b-a)^{-1}(a-1)\bigr]^{-1}
\\
&\qquad=(d-c)^{-1}(db^{-1}-ca^{-1})b.
\end{align*}
Clearly, it is the same as
\begin{align*}
&\qquad(d-c)^{-1}(c-1)-(b-a)^{-1}(a-1)
\\
&=(d-c)^{-1}(db^{-1}-ca^{-1})b\cdot\bigl[b^{-1}(db^{-1}-ca^{-1})^{-1}(ca^{-1}-1)-(b-a)^{-1}(a-1)\bigr].
\end{align*}
Opening the big bracket in the right-hand side and doing straightforward cancellations, we see that the equality $w_1=(d-c)^{-1}(db^{-1}-ca^{-1})b$ is equivalent to
$$
(d-c)^{-1}(c-1)-(b-a)^{-1}(a-1)=(d-c)^{-1}(ca^{-1}-1)-(d-c)^{-1}(db^{-1}-ca^{-1})b(b-a)^{-1}(a-1).
$$
Multiplying by $(d-c)$ from the left and cancelling out the term $-1$, we arrive to
$$
c-(d-c)(b-a)^{-1}(a-1)=ca^{-1}-(db^{-1}-ca^{-1})b(b-a)^{-1}(a-1).
$$
Opening up the brackets, we rewrite this equation as
$$
c-d(b-a)^{-1}(a-1)+c(b-a)^{-1}(a-1)=ca^{-1}-d(b-a)^{-1}(a-1)+ca^{-1}b(b-a)^{-1}(a-1).
$$
After an obvious cancellation and after multiplying by $ac^{-1}$ from the left, we get:
\begin{align*}
&w_1=(d-c)^{-1}(db^{-1}-ca^{-1})b\iff a+a(b-a)^{-1}(a-1)=1+b(b-a)^{-1}(a-1)
\\
&\iff a-1=b(b-a)^{-1}(a-1)-a(b-a)^{-1}(a-1)\iff a-1=(b-a)(b-a)^{-1}(a-1).
\end{align*}
Since the last equality is obviously true, this proves that $w_1=(d-c)^{-1}(db^{-1}-ca^{-1})b$. Since $\zeta a''\zeta^{-1}=w_1w_2$ and $w_2=(d-b)^{-1}(d-1)$, we arrive to
$$
\zeta a''\zeta^{-1}=(d-c)^{-1}(db^{-1}-ca^{-1})b(d-b)^{-1}(d-1)=(d-c)^{-1}(db^{-1}-ca^{-1})(db^{-1}-1)^{-1}(d-1)
$$
and (\ref{aprpr}) follows.
\end{proof}

We also need a presentation for $\Psi=J_2\circ\Phi\circ J_2$, defined earlier.

\begin{lemma}\label{PHI2EEE} Let
\begin{equation*}
A= \left(\begin{array}{ccc}
1&1&1\\
1&a&b\\
1&c&d
\end{array}\right)\in \widehat{\cal S}\ \ \text{and}\ \ \Psi(A)=\left(\begin{array}{ccc}
1&1&1\\
1&a^+&b^+\\
1&c^+&d^+
\end{array}\right).
\end{equation*}
Then
\begin{equation}\label{aprpri}
a^+=(d-c)^{-1}(db^{-1}-ca^{-1})(db^{-1}-1)^{-1}(d-1).
\end{equation}
 \end{lemma}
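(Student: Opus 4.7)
The strategy is to exploit the decomposition $\Psi = J_2 \circ \Phi \circ J_2$ and feed $J_2(A)$ into the closed formula for $\Phi$ supplied by Lemma~\ref{nneeww}. Since only the $(2,2)$-entry $a^+$ is claimed, I need not track the other entries of $\Psi(A)$.

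First I would observe that $J_2(A)$ is again a matrix in $\widehat{M}_3(R)$ (the first row and column of ones are preserved) whose lower-right $2\times 2$ corner has entries $a^{-1}, c^{-1}, b^{-1}, d^{-1}$ in place of $a, b, c, d$. By Lemma~\ref{meaow}, stability of $\widehat{\cal S}$ under $J_2$ gives $J_2(A) \in \widehat{\cal S}$, so Lemma~\ref{nneeww} applies with the formal substitution $(a, b, c, d) \mapsto (a^{-1}, c^{-1}, b^{-1}, d^{-1})$. Carrying out the substitution yields
$$\Phi(J_2(A))_{22} = (d^{-1}-1)^{-1}(d^{-1}-b^{-1}) \cdot a \cdot (d^{-1}c - b^{-1}a)^{-1}(d^{-1}c - 1).$$
Since $J_2$ transposes and entrywise inverts, the $(2,2)$-entry of $J_2(\Phi(J_2(A)))$ is the inverse of this expression:
$$a^+ = (d^{-1}c - 1)^{-1}(d^{-1}c - b^{-1}a) \cdot a^{-1} \cdot (d^{-1} - b^{-1})^{-1}(d^{-1}-1).$$

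To finish, I would simplify each of the four factors using the elementary identities $d^{-1}c - 1 = -d^{-1}(d-c)$, $d^{-1}c - b^{-1}a = -d^{-1}(db^{-1} - ca^{-1})a$, $d^{-1} - b^{-1} = -d^{-1}(db^{-1} - 1)$ and $d^{-1} - 1 = -d^{-1}(d-1)$, each valid because the invertibilities involved are guaranteed by $A \in \widehat{\cal S}$ through Lemma~\ref{subma}. Inverting the first and third factors, substituting into the product, and noting that the four sign factors combine to $+1$ while pairs $d \cdot d^{-1}$ and $a \cdot a^{-1}$ telescope, the expression collapses to the claimed
$$a^+ = (d-c)^{-1}(db^{-1}-ca^{-1})(db^{-1}-1)^{-1}(d-1).$$
There is no genuine obstacle here: the argument is a mechanical unfolding of the definitions, and the only point of care is to ensure that every inverse taken along the way is of an element invertible by hypothesis, which is handled uniformly by membership in $\widehat{\cal S}$ together with its $J_2$-invariance.
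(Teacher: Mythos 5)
Your proposal is correct and follows essentially the same route as the paper: the paper's proof of Lemma~\ref{PHI2EEE} likewise uses $\Psi=J_2\circ\Phi\circ J_2$ to obtain $a^+$ by substituting $(a,b,c,d)\mapsto(a^{-1},c^{-1},b^{-1},d^{-1})$ into the formula (\ref{hfex}) for $a'$, inverting the result, and simplifying with the same elementary factorizations such as $(d^{-1}c-1)^{-1}=(c-d)^{-1}d$. The invertibility bookkeeping via $\widehat{\cal S}$ and its $J_2$-stability is handled correctly as well.
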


 \begin{proof} As $\Psi=J_2\circ\Phi\circ J_2$, an expression for $a^+$ is obtained from the expression (\ref{hfex}) for the corresponding entry $a'$ by replacing $(a,b,c,d)$ with $(a^{-1},c^{-1},b^{-1},d^{-1})$ and inverting the result:
 \begin{align*}
 a^+&=\bigl[(d^{-1}-1)^{-1}(d^{-1}-b^{-1})a(d^{-1}c-b^{-1}a)^{-1}(d^{-1}c-1)\bigr]^{-1}
 \\
 &=(d^{-1}c-1)^{-1}(d^{-1}c-b^{-1}a)a^{-1}(d^{-1}-b^{-1})^{-1}(d^{-1}-1)
 \\
 &=(c-d)^{-1}d(d^{-1}c-b^{-1}a)a^{-1}b(b-d)^{-1}d(d^{-1}-1)
 \\
 &=(c-d)^{-1}(ca^{-1}-db^{-1})b(b-d)^{-1}(1-d)
 \\
 &=(d-c)^{-1}(db^{-1}-ca^{-1})(db^{-1}-1)^{-1}(d-1)
 \end{align*}
 as required.
\end{proof}

\subsection{Proof of Theorem~\ref{main2}}

Let
$$
A= \left(\begin{array}{ccc}
1&1&1\\
1&a&b\\
1&c&d
\end{array}\right)\in \widehat{\cal S},\quad \Psi(A)=\left(\begin{array}{ccc}
1&1&1\\
1&a^+&b^+\\
1&c^+&d^+
\end{array}\right),\ \ \text{and}\ \ \Phi^2(A)=\left(\begin{array}{ccc}
1&1&1\\
1&a''&b''\\
1&c''&d''
\end{array}\right).
$$
Denote
$$
\zeta(a,b,c,d)=(d-c)^{-1}(c-1)-(b-a)^{-1}(a-1).
$$
By Lemmas~\ref{aprpri} and~\ref{PHI2E}, $a''=\zeta(a,b,c,d)^{-1}a^+\zeta(a,b,c,d)$.
Corollary~\ref{kley1} (commuting with the Klein 4-group action) yields
\begin{equation}\label{AAAA}
\begin{array}{ll}
a''=\zeta(a,b,c,d)^{-1}a^+\zeta(a,b,c,d);\quad&b''=\zeta(b,a,d,c)^{-1}b^+\zeta(b,a,d,c);
\\
c''=\zeta(c,d,a,b)^{-1}c^+\zeta(c,d,a,b);&d''=\zeta(d,c,b,a)^{-1}d^+\zeta(d,c,b,a).
\end{array}
\end{equation}
We claim that
\begin{equation}\label{BBBB}
\zeta(a,b,c,d)=\zeta(d,c,b,a)=-\zeta(b,a,d,c)=-\zeta(c,d,a,b).
\end{equation}
Indeed, observe that
$$
(d-c)^{-1}(c-1)+1=(d-c)^{-1}(c-1)+(d-c)^{-1}(d-c)=(d-c)^{-1}(c-1+d-c)=(d-c)^{-1}(d-1).
$$
Similarly, $(b-a)^{-1}(a-1)+1=(b-a)^{-1}(b-1)$. and therefore
\begin{equation}\label{POO}
\zeta(a,b,c,d)=(d-c)^{-1}(c-1)-(b-a)^{-1}(a-1)=(d-c)^{-1}(d-1)-(b-a)^{-1}(b-1).
\end{equation}
Now, using (\ref{POO}), we obtain
\begin{align*}
\zeta(d,c,b,a)&=(a-b)^{-1}(b-1)-(c-d)^{-1}(d-1)=(d-c)^{-1}(d-1)-(b-a)^{-1}(b-1)=\zeta(a,b,c,d),
\\
\zeta(c,d,a,b)&=(b-a)^{-1}(a-1)-(d-c)^{-1}(c-1)=-\zeta(a,b,c,d),
\\
\zeta(b,a,d,c)&=(c-d)^{-1}(d-1)-(a-b)^{-1}(b-1)=-\zeta(a,b,c,d).
\end{align*}
Thus (\ref{BBBB}) is satisfied. Combining it with (\ref{AAAA}) and calling $\zeta(a,b,c,d)$ by just $\zeta$, we obtain
$$
\Phi^2(A)=\zeta^{-1}\Psi(A)\zeta.
$$
By Proposition~\ref{ph-1m}, there is $\xi=\xi(A)\in R^*$ such that
$$
\Psi(A)=\xi^{-1}\Phi^{-1}(A)\xi.
$$
The last two displays immediately imply Theorem~\ref{main2}.

\subsection{An explicit formula for the conjugating element}

Let
$$
A= \left(\begin{array}{ccc}
1&1&1\\
1&a&b\\
1&c&d
\end{array}\right)\in \widehat{\cal S}
$$
The already proved Theorem~\ref{main2} asserts the existence of $\omega=\omega(A)\in R^*$ such that $\Phi^2(A)=\omega^{-1}\Phi^{-1}(A)\omega$. In the next theorem we give an explicit formula for this conjugating element.

\begin{theorem} The conjugating element $\omega$, s.t.
$\Phi^2(A)=\omega^{-1}\Phi^{-1}(A)\omega$ is the following:

$$
\omega=\bigl[d(d-b)^{-1}-c(c-a)^{-1}\bigr]^{-1}\cdot
\bigl[d(d-b)^{-1}(b-1)-c(c-a)^{-1}(a-1)\bigr]\cdot\bigl[(d-c)^{-1}(c-1)-(b-a)^{-1}(a-1)\bigr].
$$

\end{theorem}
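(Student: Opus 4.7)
The plan: build on the proof of Theorem~\ref{main2}, which already established $\Phi^2(A) = \zeta^{-1}\Psi(A)\zeta$ with $\zeta = (d-c)^{-1}(c-1)-(b-a)^{-1}(a-1)$ --- precisely the third (rightmost) factor of the claimed $\omega$. Combined with Proposition~\ref{ph-1m} (which guarantees $\Psi(A) = \xi^{-1}\Phi^{-1}(A)\xi$ for \emph{some} $\xi \in R^*$), the statement reduces to showing that the product $\xi$ of the first two factors of the claimed $\omega$ indeed conjugates $\Phi^{-1}(A)$ to $\Psi(A)$, whence $\omega = \xi\zeta$.

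To do so, I would use the closed formulas of Lemma~\ref{PHI2EEE} and Lemma~\ref{nneeww} to write the $(2,2)$-entries as $a^+ = M(d-1)$ and $a^\circ = (d-1)M$ where $M := (d-c)^{-1}(db^{-1}-ca^{-1})(db^{-1}-1)^{-1}$; the $(2,2)$-relation $\xi a^+ = a^\circ \xi$ then takes the form $\xi M(d-1) = (d-1)M\xi$. The analogous relations for the other three entries follow, by Corollary~\ref{kley1}, from the $(2,2)$-relation under the Klein permutations $(a,b,c,d) \mapsto (b,a,d,c),\ (c,d,a,b),\ (d,c,b,a)$, \emph{provided} $\xi$ is itself Klein-invariant. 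The latter is a short direct check: under $(a,b,c,d) \mapsto (b,a,d,c)$ both factors $F_1 := [d(d-b)^{-1}-c(c-a)^{-1}]^{-1}$ and $F_2 := d(d-b)^{-1}(b-1)-c(c-a)^{-1}(a-1)$ of $\xi$ manifestly change sign, and under $(a,b,c,d) \mapsto (c,d,a,b)$ each also changes sign --- this uses the elementary splittings $b(b-d)^{-1} = 1 - d(d-b)^{-1}$ and $a(a-c)^{-1} = 1 - c(c-a)^{-1}$ together with $d(d-b)^{-1}(d-1) = d + d(d-b)^{-1}(b-1)$ --- so the signs cancel in $\xi = F_1F_2$. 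Hence the whole theorem reduces to the single identity $\xi M(d-1) = (d-1)M\xi$.

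To verify that identity, I would rewrite $F_2$ as $d(d-b)^{-1}b - c(c-a)^{-1}a - F_1^{-1}$ (using $d(d-b)^{-1}(b-1) = d(d-b)^{-1}b - d(d-b)^{-1}$), yielding $\xi = F_1\bigl[d(d-b)^{-1}b - c(c-a)^{-1}a\bigr] - 1$. Applying Lemma~\ref{iden2}(b) in the form $d(d-b)^{-1}b = (b^{-1}-d^{-1})^{-1}$ and $(db^{-1}-1)^{-1} = b(d-b)^{-1}$ reveals that the structural pieces of $\xi$ and $M$ share common factors, and the identity should collapse --- via a sequence of cancellations of the type witnessed in the proofs of Lemma~\ref{invphi} and Lemma~\ref{PHI2E} --- to a trivial polynomial identity. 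The main obstacle is this final bookkeeping step: the correctness of the specific formula for $\omega$ relies on a precise cancellation, a ``small miracle'' analogous to the identity $u=w$ in the proof of Lemma~\ref{invphi}, where the particular form of $\xi$ is exactly what is needed to make both sides agree. Once the correct pairing of factors is identified, the remainder is routine manipulation using the identities in Lemma~\ref{iden2}.
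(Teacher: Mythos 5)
Your reduction is sound as far as it goes: $\Phi^2(A)=\zeta^{-1}\Psi(A)\zeta$ is indeed the first half of the paper's argument, the factorizations $a^+=M(d-1)$ and $a^\circ=(d-1)M$ are correct, and your Klein-invariance check of $\xi=F_1F_2$ (the two sign changes cancelling) is valid and correctly justifies passing from the $(2,2)$-entry to the other three. But the proposal stops exactly where the theorem begins: the identity $\xi M(d-1)=(d-1)M\xi$ for this particular $\xi$ \emph{is} the content of the statement, and you offer only the expectation that it ``should collapse \dots\ to a trivial polynomial identity'' after ``routine manipulation''. In a noncommutative ring that is not a proof --- the analogous ``small miracle'' $u=w$ in Lemma~\ref{invphi} required a full page of carefully ordered cancellations, and nothing in your outline identifies the pairing of factors that makes this one work. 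A second, smaller gap: to pass from $\xi a^+=a^\circ\xi$ to $a^+=\xi^{-1}a^\circ\xi$ you need $\xi\in R^*$, i.e.\ the invertibility of $F_2=d(d-b)^{-1}(b-1)-c(c-a)^{-1}(a-1)$, which is not among the fifteen elements of (\ref{SSS}) and is never addressed.

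It is worth noting that the paper avoids this computation entirely. Since $\Psi=J_2\circ\Phi\circ J_2=\lal\circ J_1\circ J_2$ while $\Phi^{-1}=\lar\circ J_1\circ J_2$, the matrices $\Psi(A)$ and $\Phi^{-1}(A)$ are the two normalizations ($\lal$ versus $\lar$) of the \emph{same} matrix $J^{-1}(A)$; by the Conjugacy Lemma~\ref{nonu} they are therefore conjugate by a scalar $x$, and $x$ is read off by comparing top-left entries, giving $\nu=x=y^{-1}$ with $y$ the top-left entry of $(J_2(A))^{-1}$, computable from Lemma~\ref{inve}. That structural argument \emph{derives} the formula for the conjugating element (and its invertibility) rather than verifying it, which is why the paper never has to confront the identity your approach leaves unproven. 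If you want to keep your entry-wise route, you must actually carry out the verification of $\xi M(d-1)=(d-1)M\xi$.
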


\begin{proof} According to our choice of $\Phi$, discussed above we have:
$$\Phi(A)= J_2 \Lambda^L J_1(A),$$
 where $ \Lambda^L(B)=D_1^{-1}BD_2$ is the matrix from  $\widehat M_3(R)$, $D_1$, and $D_2$ are invertible diagonal matrices and $D_1$ has $1$ in the left upper corner. In the Lemma \ref{PHI2E} we have proved that
 $\Phi^2(A)$ is conjugate to the transformation $\Psi$ obtained from $\Phi$ via composition conjugation by $J_2$: $$\Psi = J_2 \circ \Phi \circ J_2.$$ The conjugating element $\zeta$, such that  $$\Phi^2(A)=\zeta^{-1}\Psi(A)\zeta,$$ is the following:  $\zeta=(d-c)^{-1}(c-1)-(b-a)^{-1}(a-1).$

Now we need to find a conjugating element $\nu$, such that
$$\Psi(A) = \nu^{-1} \Phi^{-1}(A)\nu.$$
Then we would have
$$\Phi^2(A)=\zeta^{-1}\nu^{-1}\Phi^{-1}(A) \nu\zeta,$$
so $\omega=\nu\zeta.$

To find a conjugating element of $\Psi(A)$ and $\Phi^{-1}(A)$, we should  remember how the inverse map for $\Psi$ was defined (see Proposition\ref{phiinve}):
$$\Phi^{-1}(A)= \Lambda^R \circ J_1 \circ J_2(A)=\Lambda^R \circ J^{-1} (A).$$
where $\Lambda^R = D_3^{-1}BD_4^{-1}$ belongs to $\widehat M_3(R)$, $D_3$ and $D_4$ are invertible diagonal matrices, and $D_3$ has $1$ in the left upper corner.

Since
$$\Psi(A)= J_2 \circ J_2 \circ \Lambda^L \circ J_1 \circ J_2(A)=\Lambda^L \circ J_1 \circ J_2(A)$$
and
$$\Phi(A)=\Lambda^R \circ J_1 \circ J_2(A)$$
we can write down precisely diagonal matrices featuring in $\Lambda^L$ and $\Lambda^R$ and compare element wise the resulting matrices for $\Psi$ and $\Phi^{-1}$:

$$\Psi^{-1}(A)=D_1^{-1}(J_1( J_2(A))D_2$$
and
$$\Phi^{-1}(A)=D_3^{-1}(J_1( J_2(A))D_4,$$
where
$D_1,D_2,D_3$ and $D_4$ are invertible diagonal matrices, with $D_2$ and $D_3$ having $1$ at the left upper corner.

Hence,
$$\Psi(A)=D_3^{-1}D_1 \Phi^{-1}(A) D_2^{-1}D_4.$$

According to the Conjugacy lemma \ref{nonu}, $D_2^{-1}D_4=D_3^{-1}D_1=x I$ for some element $x \in R^*$. We know that the first diagonal element in $D_2$ is $1$, hence for $D_2^{-1}D_4=x I$ to be true, the first diagonal element in $D_4$ should be $x$.

Now using the latter formula for $\Psi(A)$, and comparing left upper corner elements in its left and right-hand side, we get: $1=1 \cdot y \cdot x$, where $y$ is the left upper corner of the matrix

$$J_2(A)^{-1} = \left(\begin{array}{ccc}
1&1&1\\
1&a^{1}&c^{-1}\\
1&b^{1}&d^{1}
\end{array}\right)^{-1}
$$

Thus $x=y^{-1}$, and it could be calculated using Lemma \ref{inve}. We get then
$$\nu=x=\bigl[d(d-b)^{-1}-c(c-a)^{-1}\bigr]^{-1}\cdot
\bigl[d(d-b)^{-1}(b-1)-c(c-a)^{-1}(a-1)\bigr]$$
and the formula for $\omega$ follows from that, taking into account the formula for $\zeta$.

\end{proof}

Although probably, $\omega$ can be written in a somewhat shorter way, we believe that an alternative expression for $\omega$ can not be much shorter. To support this statement we note that the last term in $\omega$ is the one responsible for the invertibility of $A$ according to Lemma~\ref{subma} (it is a version of a noncommutative determinant), while the middle term plays the same role for $J_2(A)$. Since the invertibility of $A$ and $J_2(A)$ do not imply each other even in the commutative case, one can not expect any dramatic cancellations between the two biggest terms in the expression for $\omega$.

\section{Identities  for the Hadamard product}

We derive here certain particular identities in terms of the Hadamard product of matrices, in certain subsets of all matrices, where our transformations acts, such as matrices with commuting entries, or subset $\cal M$ from the statement below: matrices for which $a=c, b=d$.
These identities might be particular instances of some general noncommutative identity, which we do not know.

\begin{theorem} IF $R$ is commutative, then the identity

$$\Phi^{0}(A) \star \Phi^{1}(A) \star \Phi^{2}(A) =\Phi^{0}(A) \star \Phi^{1}(A) \star \Phi^{-1}(A) = {\bf 1},$$

holds on $\widehat M_3(R) \cap {\rm dom}(\Phi^2)$,
where $\star$ stands for the Hadamard (componentwise) product of matrices, and ${\bf 1}$   is the matrix with all entries equal to $1.$

\end{theorem}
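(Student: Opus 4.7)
The plan is to exploit the fact that in a commutative ring, conjugation is trivial, so Theorem \ref{main2} immediately gives $\Phi^2(A) = \Phi^{-1}(A)$ as an equality of matrices (not merely of orbits). Consequently the two claimed identities coincide, and it suffices to prove
$$A \star \Phi(A) \star \Phi^{-1}(A) = \mathbf{1}.$$
Since $A$, $\Phi(A)$ and $\Phi^{-1}(A)$ all lie in $\widehat M_3(R)$, the first row and first column of each factor are identically $1$, so the Hadamard product is trivially $1$ on these five entries. The content of the identity is therefore in the four interior entries.

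Next, I would invoke the Symmetry Lemma (Corollary \ref{kley1}): it suffices to verify one of the four interior identities, say $a \cdot a' \cdot a^\circ = 1$, and the other three ($b b' b^\circ = c c' c^\circ = d d' d^\circ = 1$) follow automatically by applying the Klein 4-group permutations of $(a,b,c,d)$, which act compatibly on $\Phi$ and $\Phi^{-1}$.

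For the key calculation, I would take the closed formulas of Lemma \ref{nneeww}, specialize to the commutative setting, and simplify. Writing $db^{-1}-1 = (d-b)/b$ and $db^{-1}-ca^{-1} = (ad-bc)/(ab)$, the formula for $a'$ collapses to
$$a' = \frac{(d-c)(d-b)}{(d-1)(ad-bc)},$$
while the formula for $a^\circ$ collapses to
$$a^\circ = \frac{(d-1)(ad-bc)}{a^2(d-c)(d-b)} \cdot a = \frac{(d-1)(ad-bc)}{a(d-c)(d-b)}.$$
Multiplying $a$, $a'$ and $a^\circ$ gives exact cancellation of every factor, yielding $1$. Combining this with the symmetry argument above and the trivial boundary entries completes the proof.

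The main (and essentially only) obstacle is bookkeeping: one must check that every factor appearing in the denominators of $a'$ and $a^\circ$ is invertible, which is guaranteed because the hypothesis $A \in \widehat M_3(R) \cap \mathrm{dom}(\Phi^2) = \widehat{\cal S}$ (by Theorem \ref{DOMA}) supplies invertibility of exactly the $15$ elements listed in Lemma \ref{subma}, in particular of $d-1$, $d-c$, $d-b$ and $ad-bc$ (the commutative reduction of $db^{-1}-ca^{-1}$, up to the invertible factor $ab$). There is no hidden subtlety: once $\Phi^2 = \Phi^{-1}$ is noted, the identity is a direct rational-function computation backed by the four-fold symmetry.
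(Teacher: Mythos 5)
Your proposal is correct. Note that the paper itself states this theorem without supplying any proof, so there is nothing to compare against; your argument is the natural one and it checks out. The key reduction is sound: in a commutative ring the conjugation in Theorem~\ref{main2} is trivial, so $\Phi^2(A)=\Phi^{-1}(A)$ as matrices and the two identities coincide; the boundary entries are trivially $1$; and Corollary~\ref{kley1} reduces the four interior identities to the single one $a\,a'\,a^\circ=1$. Specializing the closed formulas of Lemma~\ref{nneeww} commutatively, one indeed gets $a'=\frac{(d-c)(d-b)}{(d-1)(ad-bc)}$ and $a^\circ=\frac{(d-1)(ad-bc)}{a(d-c)(d-b)}$, so $a^\circ=(aa')^{-1}$ and the product is $1$; all the denominators are invertible by the description \eqref{SSS} of $\widehat{\cal S}={\rm dom}(\Phi^2)$ together with Theorem~\ref{DOMA}. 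The only cosmetic blemish is the intermediate expression $\frac{(d-1)(ad-bc)}{a^2(d-c)(d-b)}\cdot a$ for $a^\circ$, whose provenance is unclear, but its simplified form is the correct one, so the argument stands.
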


Another particular case, apart from matrices on commuting elements, could be considered. Namely,
due to the symmetry lemma the subset of matrices $\cal M$ of the shape

$$\left(\begin{array}{ccc}
1&1&1\\
1&a&b\\
1&b&a
\end{array}\right)
$$

is closed under the operation $\Phi$. Thus on this subset we can derive the following identity.

\begin{theorem} On the subset
$${\cal M} = \{\left(\begin{array}{ccc}
1&1&1\\
1&a&b\\
1&b&a
\end{array}\right)\}
$$

the following identity holds:

$$\Phi^{0}(\xi) \star \Phi^{-1}(\xi) \star \Phi^{1}(\xi)  = {\bf 1}.$$

\end{theorem}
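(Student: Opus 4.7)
The plan is to combine the $K$-symmetry of ${\cal M}$ with the closed formulas for $\Phi^{\pm 1}$ from the previous sections, reducing the four entrywise identities to a single algebraic verification in the two noncommutative variables $a,b$. The first step is to observe that ${\cal M}$ is precisely the fixed-point set of the involution $(S,S) \in K$ acting on $\widehat{M}_3(R)$, so by Lemma~\ref{kley} both $\Phi$ and $\Phi^{-1}$ preserve ${\cal M}$; in particular, for $\xi \in {\cal M}$, the matrices $\Phi(\xi)$ and $\Phi^{-1}(\xi)$ again have the symmetric form, with bottom-right $2\times 2$ blocks $\bigl(\begin{smallmatrix} a' & b' \\ b' & a' \end{smallmatrix}\bigr)$ and $\bigl(\begin{smallmatrix} a^\circ & b^\circ \\ b^\circ & a^\circ \end{smallmatrix}\bigr)$ respectively.

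Since the first row and column of every matrix in $\widehat{M}_3(R)$ consist of $1$'s, the Hadamard identity holds automatically outside the bottom-right $2\times 2$ block and reduces inside to just the two equations $a \cdot a^\circ \cdot a' = 1$ and $b \cdot b^\circ \cdot b' = 1$. Applying Corollary~\ref{kley1} to the quadruple permutation $(a,b,c,d) \mapsto (b,a,d,c)$, which realises the $a \leftrightarrow b$ swap on ${\cal M}$, shows that the second equation is obtained from the first by interchanging the roles of $a$ and $b$. Thus only the single identity $a \cdot a^\circ \cdot a' = 1$ needs to be proved.

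For that verification I would invoke Proposition~\ref{ph-1m} to work with the $\Psi$-representative from Lemma~\ref{PHI2EEE}, namely $a^+ = (a-b)^{-1}(ab^{-1}-ba^{-1})(ab^{-1}-1)^{-1}(a-1)$, together with $a' = (a-1)^{-1}(a-b)a^{-1}(ab^{-1}-ba^{-1})^{-1}(ab^{-1}-1)$ from Lemma~\ref{phiano}. Setting $Z = a-b$, $X = ab^{-1}-ba^{-1}$, $Y = ab^{-1}-1$, the central factor $(a-1)(a-1)^{-1}$ in $a^+ \cdot a'$ cancels, leaving $a \cdot Z^{-1}XY^{-1}Za^{-1}X^{-1}Y$. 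The key noncommutative identity, immediate from the expansion $a = b + Z$, is $Z^{-1}X = a^{-1}+b^{-1}$. Combining this with $Y = Zb^{-1}$ (from $ab^{-1}-1 = (a-b)b^{-1}$) and the derived equality $X^{-1}Y = a(a+b)^{-1}$ (using $(a^{-1}+b^{-1})^{-1} = a(a+b)^{-1}b$), one computes successively $aZ^{-1}X = (a+b)b^{-1}$, then $aZ^{-1}XY^{-1}Z = a+b$, and finally $a \cdot a^+ \cdot a' = (a+b)a^{-1} \cdot a(a+b)^{-1} = 1$.

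The hardest step will be justifying the opening substitution of $a^+$ for $a^\circ$: the maps $\Psi$ and $\Phi^{-1}$ agree only at the level of ${\cal D}_{LR}$-orbits (Proposition~\ref{ph-1m} together with Lemma~\ref{nonu}) and not as matrices, whereas Hadamard products are not in general preserved under conjugation. One must therefore check that the particular conjugating element relating $\Psi(\xi)$ and $\Phi^{-1}(\xi)$ for $\xi \in {\cal M}$ is compatible with the symmetric structure of ${\cal M}$ in a way that preserves the entrywise identity; once this compatibility is pinned down, the rest of the argument is the telescoping described above and presents no further obstacle.
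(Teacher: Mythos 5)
The paper states this theorem without proof, so there is nothing to compare your argument against; I can only assess it on its own terms. Your reductions are correct as far as they go: ${\cal M}$ is the fixed-point set of $(S,S)\in K$, so Lemma~\ref{kley} gives $\Phi^{\pm1}({\cal M}\cap\widehat{\cal S})\subseteq{\cal M}$, and Corollary~\ref{kley1} collapses the four entrywise equations to one. Your telescoping is also correct: specialising $c=b$, $d=a$ in (\ref{hfex}) and (\ref{aprpri}) gives $a'=(a-1)^{-1}(a-b)(a+b)^{-1}$ and $a^{+}=a^{-1}(a+b)(a-b)^{-1}(a-1)$, whence $a^{+}a'=a^{-1}$ and $a\cdot a^{+}\cdot a'=1$. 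What this proves is the identity $\Phi^{0}(\xi)\star\Psi(\xi)\star\Phi^{1}(\xi)={\bf 1}$ on ${\cal M}$.

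The step you defer to the end, however, is not a compatibility check that can be ``pinned down''; it is where the argument breaks. Specialising (\ref{hfex}) to ${\cal M}$ gives $a^{\circ}=(a-1)a^{-1}(a+b)(a-b)^{-1}=(a-1)\,a^{+}\,(a-1)^{-1}$, so the conjugator relating the two candidate middle entries is the non-central element $a-1$ (and it is $b-1$ for the other entry, so it is not even a single conjugation of the whole matrix in the sense of Lemma~\ref{nonu}). A direct computation, using that $a$ commutes with $a-1$, yields
\begin{equation*}
a\cdot a^{\circ}\cdot a'=(a-1)\,\theta\,(a-1)^{-1}\theta^{-1},\qquad \theta=(a+b)(a-b)^{-1},
\end{equation*}
a multiplicative commutator that equals $1$ precisely when $a$ commutes with $(a+b)(a-b)^{-1}$. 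This fails for generic noncommuting $a,b$; for instance $a=\left(\begin{smallmatrix}2&1\\0&3\end{smallmatrix}\right)$, $b=\left(\begin{smallmatrix}5&0\\1&7\end{smallmatrix}\right)$ over $M_2(\mathbb Q)$ puts $\xi$ in $\widehat{\cal S}\cap{\cal M}$ while $a$ and $\theta$ do not commute. So either the theorem must be read with $\Psi$ (the orbit-level representative of $J^{-1}$ from Proposition~\ref{ph-1m}) in place of the literal $\Phi^{-1}$ of Proposition~\ref{phiinve} --- in which case your computation is essentially a complete proof --- or the statement as written is false; in neither case can the gap you flagged be closed in the way you hope, because Hadamard products are not invariant under conjugation of one factor by a non-central element.
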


\section{Acknowledgements}

 We would like to thank Maxim Kontsevich for encouragement and attention to this work.
 We are also grateful to IHES and MPIM for hospitality, support, and excellent research atmosphere.
 This work is funded by the ERC grant 320974.

\vspace{77mm}

\noindent   Natalia Iyudu

\noindent School of Mathematics

\noindent  The University of Edinburgh

\noindent James Clerk Maxwell Building

\noindent The King's Buildings

\noindent Mayfield Road

\noindent Edinburgh

\noindent Scotland EH9 3JZ

\noindent E-mail address: \qquad {\tt niyudu@staffmail.ed.ac.uk}\ \ \

\vskip 11mm

\noindent   Stanislav Shkarin

\noindent Queens's University Belfast

\noindent Department of Pure Mathematics

\noindent University road, Belfast, BT7 1NN, UK

\noindent E-mail address: \qquad {\tt s.shkarin@qub.ac.uk}

\vskip 5mm

\end{document}